\newtheorem{theorem}{Theorem}[section]
\newtheorem{lemma}[theorem]{Lemma}
\newtheorem{proposition}[theorem]{Proposition}
\newtheorem{corollary}[theorem]{Corollary}
\theoremstyle{definition}
\newtheorem{definition}[theorem]{Definition}
\newtheorem{example}[theorem]{Example}
\numberwithin{equation}{section}
\newtheorem{remark}[theorem]{Remark}
\title
[Almost duality for Saito structure and complex reflection groups II]{
Almost duality for Saito structure and complex reflection groups II: 
the case of Coxeter and Shephard groups}
\author{Yukiko Konishi}
\address{ 
Department of Mathematics, 
College of Liberal Arts,
Tsuda University,
Toyko 187-8577, Japan 
}
\email{konishi@tsuda.ac.jp}
\author{Satoshi Minabe}
\address{Department of Mathematics, 
Tokyo Denki University, Tokyo 120-8551,  Japan}
\email{minabe@mail.dendai.ac.jp}
\keywords{Frobenius structures, Saito structures, Coxeter groups, Shephard groups}
\subjclass[2010]{Primary 53D45;  Secondary 20F55}
\begin{document}
\maketitle
\begin{abstract}
This article is a sequel to \cite{KMS2018}.
It is known that the orbit spaces of the finite Coxeter groups and the Shephard groups 
admit two types of Saito structures without metric.
One is the underlying structures of the Frobenius structures
constructed by Saito \cite{Saito1993} 
and Dubrovin \cite{Dubrovin2004}.
The other is the natural Saito constructed by Kato--Mano--Sekiguchi \cite{KatoManoSekiguchi2015} 
and by Arsie--Lorenzoni \cite{Arsie-Lorenzoni2016}.
We study the relationship between these two 
Saito structures from the viewpoint of almost duality.  
\end{abstract}
\section{Introduction} 
In 1979, K. Saito constructed 
flat structures on the orbit spaces of the
finite Coxeter groups (i.e. the finite real reflection groups) \cite{Saito1993}. See also \cite{SaitoSekiguchiYano}. 
Nowadays, his flat structure is called the Frobenius structure \cite{Dubrovin1993}.
Generalizing Saito's 
results, Dubrovin 
constructed  Frobenius structures on the orbit spaces of
the Shephard groups \cite{Dubrovin2004}. 
(A Shephard group is 
the symmetric group of  a regular complex polytope.)
We call these 
Frobenius structures
the Coxeter--Shephard (or CS) Frobenius structures.
Dubrovin gave a characterization of the CS Frobenius structures
using his almost duality for Frobenius structures \cite{Dubrovin2004}.

In this article, we call the Saito structure without metric 
the Saito structure for short.
A Frobenius structure has a Saito structure
as an underlying structure \cite{Sabbah}.
In other words, a Frobenius structure is a Saito structure 
together with a compatible metric.

There is a distinguished class of finite complex reflection groups
called the duality groups. It includes the finite Coxeter groups and the Shephard groups.
See \cite[\S B.4]{OrlikTerao} and also Table \ref{fcrg}.
Recently, Kato, Mano and Sekiguchi  showed the existence of 
Saito structures on the orbit spaces of the duality groups \cite{KatoManoSekiguchi2015}.   
Arsie and Lorenzoni also studied the same Saito  structures 
for the duality groups of rank $n=2,3$
\cite{Arsie-Lorenzoni2016}. 
In \cite{KMS2018}, we formulated the almost duality 
for the Saito structure and characterized their Saito structure.
We call it the {\it natural} Saito structure
because it comes from the trivial connection.

So the orbit space of  
a finite Coxeter group or a Shephard group 
has both the CS Frobenius structure and 
the natural Saito structure.
A natural question is that whether the latter 
is the underlying Saito structure of the former.
Arsie and Lorenzoni obtained results on this problem
for rank $n=2,3$ 
\cite{Arsie-Lorenzoni2016}.

In this article, we revisit the construction
of the CS Frobenius structure from the viewpoint of
the almost duality of the Saito structure (Theorem \ref{theorem1}) and
show that 
the multiplication of the natural Saito structure
and that of the CS Frobenius structure 
agree for all the finite Coxeter groups and all the Shephard groups (Corollary \ref{cor}).
To prove this, we do not use the classification of these groups explicitly,
but use their characterization by degrees (see the condition (CS3) in \S \ref{section:CS-groups}).
We also compare
the connections and find that they coincide only for all the finite Coxeter groups and some of the Shephard groups (Theorem \ref{theorem2}).  
We need the classification to prove this theorem.
In the case of  rank $n=2,3$, 
our results are in accord with \cite{Arsie-Lorenzoni2016}.

Moreover we find that 
the natural Saito structure admits a compatible metric 
if and only if it agrees with the underlying Saito structure of the CS Frobenius structure
(Theorem \ref{theorem3}).  
Again, the proof does not use the classification explicitly.

The article is organized as follows.
In \S \ref{review}, we first recall the definitions of 
the Saito structure, the Frobenius structure and the
almost duality.
Then in \S \ref{review-duality-group}, 
we summarize the natural (almost) Saito structure 
for the duality groups.
In \S \ref{section:CS-groups},  
we explain the CS Frobenius structure 
for the finite Coxeter  groups and the Shephard groups
from the viewpoint of the almost duality for Saito structures
(Theorem \ref{theorem1}).
\S \ref{main-results}
contains the main results of this article,
Theorem \ref{theorem2} and Theorem \ref{theorem3}.
The remaining sections are devoted to proofs.
 \S \ref{matrix-representation} is a preliminary: we write
 down conditions in the matrix form with respect to
 flat coordinates of the natural Saito structure.
 In \S \ref{section:proof-multiplication}, we give a proof of Theorem \ref{theorem1} and Theorem \ref{theorem:multiplication}.
In \S \ref{proof-theorem3}, we prove Theorem \ref{theorem3}.
In \S \ref{appendix1}, we describes some technical details of the example $G(m,1,n)$.

\renewcommand{\figurename}{Table}
\begin{figure}
\begin{picture}(400,300)
\put(20,0){\line(1,0){360}}
\put(0,20){\line(0,1){260}}
\put(400,20){\line(0,1){260}}
\put(20,300){\line(1,0){360}}
\put(20,20){\oval(40,40)[bl]}
\put(380,20){\oval(40,40)[br]}
\put(20,280){\oval(40,40)[tl]}
\put(380,280){\oval(40,40)[tr]}
\put(20,280){$G(m,p,n)$ $(1<p<m,n\geq 2),
\quad G_7,G_{11},G_{12},G_{13},G_{15},G_{19},G_{22},G_{31}$}
\put(160,255){Duality groups}
\put(30,10){\line(1,0){340}}
\put(10,30){\line(0,1){210}}
\put(390,30){\line(0,1){210}}
\put(30,260){\line(1,0){130}}
\put(240,260){\line(1,0){130}}
\put(30,30){\oval(40,40)[bl]}
\put(370,30){\oval(40,40)[br]}
\put(370,240){\oval(40,40)[tr]}
\put(30,240){\oval(40,40)[tl]}
\put(80,235){$G(m,m,n)$ $(m,n\geq 3)$, 
$\quad G_{24},G_{27},G_{29},G_{33},G_{34}$}
\put(150,215){finite Coxeter groups}
\put(85,220){\line(1,0){60}}
\put(260,220){\line(1,0){50}}
\put(90,85){\line(1,0){220}}
\put(70,105){\line(0,1){95}}
\put(330,105){\line(0,1){95}}
\put(90,105){\oval(40,40)[bl]}
\put(90,200){\oval(40,40)[tl]}
\put(310,105){\oval(40,40)[br]}
\put(310,200){\oval(40,40)[tr]}
\put(120,195){$G(2,2,n)=D_n$ $(n\geq 4)$}
\put(100,175){
$ G_{35}=E_6,~ G_{36}=E_7, ~G_{37}=E_8$}
\put(100,140){$A_{n-1}$ $(n\geq 2)$, $\quad 
G(2,1,n)=B_n$ ($n\geq 2$) }
\put(120,120){
$ G(m,m,2)=I_2(m)$ $(m\geq 5)$}
\put(110,100){$G_{23}=H_3,~G_{28}=F_4,~G_{30}=H_4$}
\put(160,25){Shephard groups}
\put(40,30){\line(1,0){120}}
\put(250,30){\line(1,0){110}}
\put(40,165){\line(1,0){320}}
\put(20,50){\line(0,1){95}}
\put(380,50){\line(0,1){95}}
\put(40,50){\oval(40,40)[bl]}
\put(360,50){\oval(40,40)[br]}
\put(360,145){\oval(40,40)[tr]}
\put(40,145){\oval(40,40)[tl]}
\put(85,65){$G_3=\mathbb{Z}_m$ $(m\geq 2)$, $G(m,1,n)$ $(m\geq 3,n\geq 2)$}
\put(40,45){$G_4,G_5,G_6$, $G_8,G_9,G_{10},G_{14},
G_{16},G_{17},G_{18},G_{20},G_{21}$, $G_{25},G_{26},G_{32}$}
\end{picture}
\caption{Irreducible finite complex reflection groups \cite[\S B.4]{OrlikTerao}. Notations follow \cite{ShephardTodd}.}
\label{fcrg}
\end{figure}

\subsection*{Acknowledgements}
The work of Y.K is supported in part 
JSPS KAKENHI Kiban-S 16H06337.
The work of S.M. is supported in part by 
JSPS KAKENHI Kiban-C 17K05228.

\section{(Almost) Saito structure and (almost) Frobenius structure}\label{review}

The definition of Saito structure (without metric) can be found in \cite{Sabbah}. See also \cite{KMS2018} for almost Saito structures.
\begin{definition}
A Saito structure (SS for short) on a manifold $M$ 
consists of 
\begin{itemize}
\item a torsion-free flat connection $\nabla$ on $TM$,
\item an associative commutative  multiplication $\ast$ 
on $TM$ with a unit $e\in \Gamma(M,\mathcal{T}_M)$,
\item a vector field $E\in \Gamma(M,\mathcal{T}_M)$ 
called {\it the Euler vector field},  
\end{itemize}
satisfying the following conditions. Let $X,Y,Z\in \mathcal{T}_M$:
\begin{eqnarray}
\nonumber (\bf{SS1})
&&\nabla_X (Y\ast Z)-Y\ast \nabla_X \,Z-\nabla_Y(X\ast Z)
+X\ast \nabla_Y \,Z=[X,Y]\ast Z~.
\\
\nonumber (\bf{SS2})
&&[E,X\ast Y]-[E,X]\ast Y-X\ast[E,Y]=X\ast Y~.
\\
\nonumber (\bf{SS3})
&&\nabla e=0~.
\\
\nonumber (\bf{SS4})
&&
\nabla_X\nabla_Y E-\nabla_{\nabla_X Y}E=0~.
\end{eqnarray}
\end{definition} 

\begin{definition}
An almost Saito structure (ASS for short) on a manifold $N$ 
with parameter $r\in \mathbb{C}$
consists of 
\begin{itemize}
\item a torsion-free flat connection $\boldsymbol{\nabla}$ on $TN$,
\item an associative commutative multiplication 
$\star$ 
on $TN$ with a unit $E\in \Gamma(N,\mathcal{T}_N)$, 
\item a nonzero vector field  $e\in \Gamma(N,\mathcal{T}_N)$
\end{itemize}
satisfying the following conditions. Let $X,Y,Z\in \mathcal{T}_N$:
\begin{eqnarray}
\nonumber ({\bf ASS1})
&&
\boldsymbol{\nabla}_X (Y\star Z)-Y\star \boldsymbol{\nabla}_X \,Z
-\boldsymbol{\nabla}_Y(X\star Z)+X\star \boldsymbol{\nabla}_Y\,Z=[X,Y]\star Z~.
\\
\nonumber ({\bf ASS2})
&& 
[e,X\star Y]-[e,X]\star Y-X\star[e,Y]+e\star X\star Y=0~.
\\
\nonumber ({\bf ASS3})
&&
\boldsymbol{\nabla}_X E=r X ~.
\\
\nonumber ({\bf ASS4})
&&
\boldsymbol{\nabla}_X\boldsymbol{\nabla}_Y \,e
-\boldsymbol{\nabla}_{\boldsymbol{\nabla}_X Y}\,e+
\boldsymbol{\nabla}_{X\star Y} \,e=0~.
\end{eqnarray}
\end{definition}

There is a following relationship between the Saito structure
and the almost Saito structure \cite[Proposition 3.7]{KMS2018}.
Let  $(\boldsymbol{\nabla},\star, e)$ be an ASS on $N$ with the unit $E$
and parameter $r$. 
For a point $p\in N$,
let $\mathcal{P}_p=e\star:T_pN\to T_p N$ and 
$$N_0:=\{p\in N\mid \text{$\mathcal{P}_p$ is invertible}\}~.$$
Then 
if we define a multiplication $\ast$ and 
a connection ${\nabla}$ by
\begin{eqnarray}\label{ass2ss-1}
e\star(X\ast Y)&=&X\star Y~,
\\
\label{ass2ss-2}
\nabla_X \,Y&=&\boldsymbol{\nabla}_X\,Y
-\boldsymbol{\nabla}_{X\ast Y}\,e
~,
\end{eqnarray}
then $e$ is the unit of $\ast$ and 
$({\nabla}, \ast,E)$ is a  SS on $N_0$.
Moreover, it holds that
\begin{eqnarray}\label{ss2ass0}
E\ast(X\star Y)&= &X\ast Y~,\\
\label{ss2ass}
\boldsymbol{\nabla}_X\, Y&=&{\nabla}_X\,Y+r
X\star Y-\nabla_{X\star Y}\,E
~.
\end{eqnarray}
We say that the SS $(\nabla,\ast,E)$ is dual to the ASS $(\boldsymbol{\nabla},\star,e)$.

\begin{remark}
Given a SS $(\nabla,\ast,E)$ with the unit $e$, 
one can make a dual ASS $(\boldsymbol{\nabla},\star,e)$ 
with the unit $E$ by
\eqref{ss2ass0} and \eqref{ss2ass}.
Notice that
there exists a one-parameter family of dual ASS's depending on 
the choice of the parameter $r\in \mathbb{C}$.
\end{remark}

A Frobenius structure \cite{Dubrovin1993} on a manifold $M$
of charge $D\in \mathbb{C}$ is 
a Saito structure $(\nabla,\ast ,E)$ on $M$
together with a nondegenerate symmetric bilinear form (``metric'') $\eta$ on $TM$
satisfying the following conditions. 
Let $X,Y,Z\in \mathcal{T}_M$:
\begin{eqnarray}
\label{f1}
&&X(\eta(Y,Z))=\eta(\nabla_X\,Y,Z)+\eta(Y,\nabla_X\,Z)~.
\\\label{f2}
&&
\eta(X\ast Y,Z)=\eta(X,Y\ast Z) ~.
\\
\label{f3}
&&E\eta(X,Y)-\eta([E,X],Y)-\eta(X,[E,Y])=(2-D)\eta(X,Y)~.
\end{eqnarray}
Note that \eqref{f1} means that $\nabla$ is the Levi--Civita connection of $\eta$
(i.e. the unique torsion free connection on $TM$
compatible with $\eta$).

An almost Frobenius structure \cite[\S 3]{Dubrovin2004}  of charge
$D\in \mathbb{C}$ on a manifold $N$ is 
an almost Saito structure $(\boldsymbol{\nabla},\star ,e)$ 
with parameter 
\begin{equation}\nonumber 
r=\frac{1-D}{2}
\end{equation}
together with a metric  
$g$ on $TN$
satisfying the following conditions. Let $X,Y,Z\in \mathcal{T}_N$:
\begin{eqnarray}
\label{af1}
&&
X(g(Y,Z))=g(\boldsymbol{\nabla}_X\,Y,Z)+g(Y,\boldsymbol{\nabla}_X\,Z)~.
\\\label{af2}
&&g(X\star Y,Z)=g(X,Y\star Z)~.
\\
\label{af3}
&&e \,g(X,Y)-g([e,X],Y)-g(X,[e,Y])+g(e\star X,Y)=0~.
\end{eqnarray}

There is a following relationship between the Frobenius structure
and the almost Frobenius structure \cite{Dubrovin2004}.
Let $(g,\star,e)$ be an almost  Frobenius structure on $N$ of charge $D$ with the unit $E$.
Let us define a multiplication $\ast $ by \eqref{ass2ss-1}.
If we define a metric $\eta$ by
\begin{equation}\label{afs2fs}
\eta(X,Y)=g(X,E\ast Y)~,
\end{equation}
then $(\nabla,\ast,E)$ is a Frobenius structure on $N_0$
of the same charge $D$.  Moreover,
the Levi--Civita connections $\boldsymbol{\nabla}$ and $\nabla$
of $g$ and $\eta$ are related by \eqref{ass2ss-2}.
We say that $(\eta,\ast, E)$ is dual to the almost Frobenius structure $(g,\star,e)$.

\section{The natural Saito structure for duality groups}
\label{review-duality-group}
\subsection{Finite complex reflection groups}
For finite complex reflection groups,  see \cite{LehrerTaylor}
and \cite{OrlikTerao}. 

Let $V=\mathbb{C}^n$ and denote by $u^1,\ldots,u^n$ the standard coordinates of $V$.
Let $G$ be a finite complex reflection group acting on $V$.
It is well known that 
the ring of $G$-invariant polynomials $\mathbb{C}[V]^G=\mathbb{C}[u]^G$ is generated by $n$ $G$-invariant homogeneous polynomials.  
Such a set of generators $x^1,\ldots, x^n$ is called {\it a  set of basic invariants} for 
$G$. 
We assume that $x^1,\ldots, x^n$ are ordered so that {\it the degrees} $d_{\alpha}=\mathrm{deg}\,x^{\alpha}$ ($1\leq \alpha\leq n$) are in descending order, i.e.,
$$
d_1\geq d_2\geq d_3\geq  \ldots \geq d_n~.
$$

The $\mathbb{C}[V]^G$-module of 
$G$-invariant differential $1$-forms on $V$ is denoted 
$\Omega_{\mathbb{C}[V]}^G$. It is a free $\mathbb{C}[V]^G$-module of rank $n$
and $dx^1,\ldots, dx^n$ form its basis (see \cite[Theorem 6.49]{OrlikTerao}).
The $\mathbb{C}[V]^G$-module  of $G$-invariant derivations on $V$ is denoted  $\mathrm{Der}_{\mathbb{C}[V]}^G$. It is also a free
$\mathbb{C}[V]^G$-module of rank $n$
(see \cite[Lemma 6.48]{OrlikTerao}).
A homogeneous basis
$\{X_1,\ldots, X_n\}$
 of $\mathrm{Der}_{\mathbb{C}[V]}^G$
is called {\it a set of basic derivations} for $G$.
The degrees\footnote{
In this article, 
the degree of $\frac{\partial}{\partial u^{\alpha}}$ is counted as 
$-\mathrm{deg}\,u^{\alpha}=-1$.
If $f\in \mathbb{C}[u]$ is a homogeneous polynomial of degree $d$, 
the degree of the vector field 
$f\frac{\partial}{\partial u^{\alpha}}$ is 
$d-1$.}  
$d_1^{\ast},\ldots, d_n^{\ast}$
of $X_1,\ldots, X_n$
are called {\it the codegrees} of $G$. 
When necessary, we order $X_1,\ldots, X_n$ so that 
the codegrees are in ascending order:
$$0=d_1^{\ast}\leq d_2^{\ast}\leq \ldots \leq d_n^{\ast}~.$$

A polynomial $f\in \mathbb{C}[V]$ defines
a homomorphism 
$\mathrm{Hess}(f)$ 
from the $\mathbb{C}[V]$-module of  derivations to the $\mathbb{C}[V]$-module of differential $1$-forms by
$$
\mathrm{Hess}(f)\left(\frac{\partial}{\partial u^i}\right)
=\sum_{j=1}^n \frac{\partial^2 f}{\partial u^i \partial u^j}du^j~.
$$
If $f$ is $G$-invariant, this homomorphism induces 
a map from $\Omega_{\mathbb{C}[V]}^G$ 
to  $\mathrm{Der}_{\mathbb{C}[V]}^G$ (see \cite[Lemma 6.9]{OrlikTerao}).

Let $M=\mathrm{Spec}\,\mathbb{C}[V]^G
=\mathrm{Spec}\,\mathbb{C}[x]
\cong \mathbb{C}^n$
be the orbit space of $G$ and let
$\pi:V\to M$ be the orbit map.
The complement of reflection hyperplanes  is denoted 
$V^{\circ}$  and its image $\pi(V)$ is denoted $M^{\circ}$.
The orbit map $\pi:V^{\circ}\to M^{\circ}$ is an unbranched covering map. 
So we can regard the standard coordinates $u^1,\ldots, u^n$ of $V$
as local coordinates of $M^{\circ}$. 
We will use the two (local) coordinate systems
$x=(x^1,\ldots, x^n)$ and $u=(u^1,\ldots, u^n)$ on $M^{\circ}$.

Since $\pi:V^{\circ}\to M^{\circ}$ is locally a homeomorphism,
the trivial connection on $TV$ 
induces 
a connection  $\boldsymbol{\nabla}^V$
on $TM^{\circ}$. In the local $u$-coordinates,  
it is given by
\begin{equation}\label{trivial-connection-u}
\boldsymbol{\nabla}^V_{\frac{\partial}{\partial u^i}}\frac{\partial}{\partial u^j}=0\quad (1\leq i,j\leq n)~.
\end{equation}
By definition, $\boldsymbol{\nabla}^V$ is flat 
and torsion free.

In this article, we only treat 
the finite complex reflection groups $G$
which are irreducible  (i.e. $G$ acting on $V$ 
irreducibly) and which satisfy the strict inequality 
$d_1>d_2$.
The irreducibility implies 
$d_{\alpha}\geq 2$ ($1\leq \alpha\leq n$).
The inequality $d_1>d_2$ implies that 
the vector field 
\begin{equation}\label{unit-e}
e:=\frac{\partial}{\partial x^1}
\left(=\sum_{i=1}^n \frac{\partial u^i}{\partial x^1} \frac{\partial}{\partial u^i} \right)
\end{equation}
on $M$ is independent of the choice of the 
set of basic invariants $x^1,\ldots, x^n$ up to scalar multiplication.\footnote{
Assume that $d_1>d_2$ and that 
$\tilde{x}=(\tilde{x}^1,\ldots,\tilde{x}^n)$ is another set of basic invariants for $G$. Then by degree consideration,
$$
\tilde{x}^1=a x^1+\text{a polynomial in $x^2,\ldots, x^n$}
\quad (a\in \mathbb{C},~a\neq 0)
$$
and $\tilde{x}^2,\ldots,\tilde{x}^n$ are polynomials in $x^2,\ldots,x^n$.  Therefore  by the chain rule, we have
$$
\frac{\partial}{\partial x^1}=\sum_{\alpha=1}^n
\frac{\partial \tilde{x}^\alpha}{\partial x^1}
\frac{\partial}{\partial \tilde{x}^\alpha}
=a \frac{\partial}{\partial \tilde{x}^1 }~.
$$
}
We also set
\begin{equation}\label{euler-vf}
E=\frac{1}{d_1}E_{\mathrm{deg}}~,\quad 
E_{\mathrm{deg}}
=\sum_{i=1}^n u^i \frac{\partial}{\partial u^i}
=\sum_{\alpha=1}^n d_{\alpha}x^{\alpha}\frac{\partial}{\partial x^{\alpha}}~.
\end{equation}
Notice that the derivation $E_{\mathrm{deg}}$
acts  on a homogeneous polynomial $f\in \mathbb{C}[u]$
or $f\in \mathbb{C}[u]^G=\mathbb{C}[x]$
as
\begin{equation}\label{degree-operator}
E_{\mathrm{deg}} (f) =(\mathrm{deg}\,f) f~.
\end{equation}

\subsection{The natural Saito structure for the duality groups}
\label{section:duality-groups}
For an irreducible finite complex reflection group $G$ of rank $n$,
the following conditions are equivalent. See e.g. \cite[Theorem 2.14]{Bessis2006}.
\begin{enumerate} 
\item[(D1)] $d_{\alpha}+d_{\alpha}^{\ast}=d_1$$ (1\leq \alpha\leq n)$.
\item[(D2)]  $G$ is generated by $n$ reflections.
\item[(D3)]  There exists a set of basic invariants such that
the discriminant $\Delta\in \mathbb{C}[x]$ of $G$ is 
a monic polynomial of degree $n$ as a polynomial in $x^1$.
\end{enumerate}
An irreducible finite complex reflection group $G$ 
satisfying these conditions is called a duality group.

Let $G$ be a duality group
and let $x^1,\ldots, x^n$ be a set of basic invariants for $G$.
For a duality group, $d_1>d_2$ holds. 
This follows from the classification. 
Recall that  $\boldsymbol{\nabla}^V$  given in \eqref{trivial-connection-u}
is flat and torsion free.
In \cite{KMS2018}, 
the followings are proved
using the property (D3).

\begin{theorem}\label{thm:KMS}
\begin{enumerate}
\item 
The endomorphism
$$
T_pM^{\circ}\to T_pM^{\circ}~,\quad X\mapsto \boldsymbol{\nabla}^{V}_X\,e
$$
of the tangent space $T_p M^{\circ}$ 
is invertible at every point $p\in M^{\circ}$ \cite[Corollary 7.3]{KMS2018}. Therefore 
the following condition $(${\bf ASS4}$)$ uniquely determines
the multiplication $\star$ on $TM^{\circ}$:
\begin{equation}\label{star-determined}
\boldsymbol{\nabla}^{V}_X\boldsymbol{\nabla}^{V}_Y \,e-
\boldsymbol{\nabla}_{\boldsymbol{\nabla}^{V}_X Y}^{V}\,e+
\boldsymbol{\nabla}_{X\star Y}^{V}\,e=0\qquad (X,Y\in \mathcal{T}_{M^{\circ}})~.
\end{equation}
\item
 The multiplication 
$\star$ is associative and commutative and has the unit
$E$. 
\item 
$(\boldsymbol{\nabla}^{V},\star,e)$ is an ASS
of parameter $\frac{1}{d_1}$ on $M^{\circ}$ \cite[Corollary 7.6]{KMS2018}.
\end{enumerate}
\end{theorem}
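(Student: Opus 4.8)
The plan is to work throughout in the flat coordinates $u^1,\dots,u^n$, in which $\boldsymbol{\nabla}^V$ is the trivial connection \eqref{trivial-connection-u}. Writing $e=\sum_j e^j\,\partial/\partial u^j$ with $e^j=\partial u^j/\partial x^1$, the endomorphism $\mathcal{E}\colon X\mapsto\boldsymbol{\nabla}^V_X e$ has matrix $\mathcal{E}^j_i=\partial e^j/\partial u^i$ in the frame $\{\partial/\partial u^i\}$, so assertion (1) amounts to the everywhere non-degeneracy of this Jacobian on $M^\circ$. Part (1) is the heart of the matter: once it is established, $\star$ is defined by \eqref{star-determined}, and the remaining assertions split into a formal part (commutativity, the unit $E$, (ASS3), hence the value $\tfrac1{d_1}$ of the parameter) and a more substantial part ((ASS1), (ASS2), associativity).

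For (1): $\det\mathcal{E}$ is a $G$-invariant regular function on $V^\circ$, hence an element of $\mathcal{O}(M^\circ)=\mathbb{C}[x][\Delta^{-1}]$, and by \eqref{degree-operator} together with $\deg e^j=1-d_1$ it is weighted-homogeneous of degree $-n d_1$. Property (D3) makes $\Delta$ monic of degree $n$ in $x^1$, hence weighted-homogeneous of degree $n d_1$, so $\Delta\cdot\det\mathcal{E}$ has weighted degree $0$ and still lies in $\mathbb{C}[x][\Delta^{-1}]$. It therefore suffices to show that $\det\mathcal{E}$ has at most a simple pole along $\{\Delta=0\}$: then $\Delta\cdot\det\mathcal{E}$ has no pole, hence is a polynomial of weighted degree $0$, i.e.\ a constant, and this constant is nonzero because the same local estimate shows $\det\mathcal{E}\not\equiv 0$; consequently $\det\mathcal{E}$ is a nonzero constant times $\Delta^{-1}$ and never vanishes on $M^\circ$. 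The pole order is computed by a local analysis near a generic point of a reflecting hyperplane $H$ whose pointwise stabilizer has order $e_H$: there $\pi$ is, up to biholomorphism, $(u^1,u^2,\dots)\mapsto((u^1)^{e_H},u^2,\dots)$, so the pullback of $\Delta$ vanishes to order $e_H$ in $u^1$, and factoring $\det\mathcal{E}=\det\!\big(\partial^2 u^i/\partial x^1\partial x^j\big)\cdot\det(\partial x^\alpha/\partial u^i)$ one reads off that the first factor behaves like $\Delta^{1/e_H-2}$ and the second (the Jacobian of $\pi$) like $\Delta^{(e_H-1)/e_H}$, whose product is $\Delta^{-1}$. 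This branching computation, dovetailed with the degree count supplied by (D3), is the main obstacle of the whole theorem.

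Given (1), \eqref{star-determined} reads $\mathcal{E}(X\star Y)=\boldsymbol{\nabla}^V_{\boldsymbol{\nabla}^V_XY}e-\boldsymbol{\nabla}^V_X\boldsymbol{\nabla}^V_Y e$, whose right-hand side is $C^\infty$-bilinear in $X,Y$ and, by flatness and torsion-freeness of $\boldsymbol{\nabla}^V$, symmetric in $X,Y$; so $\star$ is a well-defined commutative product. From \eqref{euler-vf} one computes directly $\boldsymbol{\nabla}^V_X E=\tfrac1{d_1}X$ — this is (ASS3) with $r=\tfrac1{d_1}$ — and $\boldsymbol{\nabla}^V_E e=(\tfrac1{d_1}-1)e$ (Euler homogeneity of the $e^j$), and a short manipulation of \eqref{star-determined} with these two facts gives $E\star X=X$. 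It remains to prove (ASS1), (ASS2) and associativity. Differentiating \eqref{star-determined} a second time and feeding back flatness of $\boldsymbol{\nabla}^V$, one sees that in the flat frame the defect of (ASS1) equals the defect of associativity; hence, modulo commutativity, (ASS1) and associativity are equivalent, and both hold if and only if the structure connection $\boldsymbol{\nabla}^V+\star$ is flat. The plan is to deduce this flatness, and (ASS2), from the explicit form $e=\partial/\partial x^1$ together with (D3): a natural route is to identify $\boldsymbol{\nabla}^V+\star$ with a connection built from the module of logarithmic vector fields along the free divisor $\{\Delta=0\}$, which is where (D3) — making $\partial/\partial x^1$ a good primitive direction transverse to the discriminant — enters a second time. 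Once this and part (1) are in hand, all the ASS axioms and the parameter value $r=\tfrac1{d_1}$ follow.
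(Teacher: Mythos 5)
First, note that the paper does not actually prove Theorem \ref{thm:KMS}: it is quoted from \cite[Corollaries 7.3 and 7.6]{KMS2018}, so there is no in-paper argument to compare against. Judged on its own terms, your proposal gets the formal bookkeeping right: the degree count $\deg\det\mathcal{E}=-nd_1$, the reduction of (1) to a pole-order estimate along the discriminant via (D3), the tensoriality and symmetry of the right-hand side of \eqref{star-determined} (hence commutativity), the computation showing $E$ is the unit, (\textbf{ASS3}) with $r=\tfrac1{d_1}$, and the observation that differentiating \eqref{star-determined} and using flatness of $\boldsymbol{\nabla}^V$ plus invertibility of $\mathcal{E}$ yields $\partial_j\tilde B_i-\partial_i\tilde B_j=[\tilde B_j,\tilde B_i]$, so that (\textbf{ASS1}) and associativity are equivalent. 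All of this is correct. But the two steps that carry the real content are left as announcements, not proofs.

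Concretely: (i) In part (1), your local model gives only the \emph{upper} bound on the pole order (first row of $(\partial^2u^j/\partial x^1\partial x^\beta)$ of order $(w^1)^{1/e_H-2}$, remaining rows holomorphic, Jacobian of order $(w^1)^{e_H-1}$), which shows $\Delta\cdot\det\mathcal{E}$ is a constant $c$. The claim that ``the same local estimate shows $\det\mathcal{E}\not\equiv0$'' is not established: $c\neq0$ requires the pole to be \emph{exactly} simple along some component, i.e.\ the non-vanishing of $\partial w^1/\partial x^1$ together with a cofactor determinant built from the holomorphic rows $\partial^2w^j/\partial x^1\partial x^\beta$. This transversality/non-degeneracy statement is precisely where (D3) must do real work, and it is asserted rather than proved; without it the endomorphism could a priori be everywhere degenerate. (ii) Associativity (equivalently (\textbf{ASS1})) and (\textbf{ASS2}) are never proved: you reduce them to the flatness of $\boldsymbol{\nabla}^V+\star$ and then say ``the plan is to deduce this'' via an identification with a logarithmic connection along $\{\Delta=0\}$. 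That identification is itself the hard theorem (it is essentially the content of \cite[Theorem 7.5]{KMS2018}), so as written the proposal establishes commutativity, the unit, and the parameter value, but not parts (1), (3), nor the associativity claim in (2).
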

  
\begin{definition}
$(\boldsymbol{\nabla}^V, \star,e)$ is called  a natural ASS for the duality group $G$.
\end{definition}

For convenience of the next examples, let us write the statements (1) and (2)
in Theorem \ref{thm:KMS} 
in the $u$-coordinates.
Denote the structure constants of the multiplication 
$\star$ with respect to 
$\frac{\partial}{\partial u^1},\ldots ,\frac{\partial}{\partial u^n}$
by $\tilde{B}_{ij}^k$, i.e.
$$
\frac{\partial}{\partial u^i}\star\frac{\partial}{\partial u^j}
=\sum_{k=1}^n \tilde{B}_{ij}^k \frac{\partial}{\partial u^k}
\quad (1\leq i,j\leq n)~.
$$
Let us set 
$$
e^k=\frac{\partial u^k}{\partial x^1} ~,
\quad {Q^{k}}_j=\frac{\partial e^k}{\partial u^j}~.
$$
The matrix $Q=({Q^{k}}_j)$  is nothing but the representation matrix of 
the map $X\mapsto \boldsymbol{\nabla}^V_X \,e$
with respect to the basis $\frac{\partial}{\partial u^1},\ldots ,\frac{\partial}{\partial u^n}$.
Therefore the statement (1) is equivalent to the condition $\det Q\neq 0$. 
Eq. \eqref{star-determined} is equivalent to 
\begin{equation}\label{B-u}
\frac{\partial^2 e^k}{\partial u^i\partial u^j}
+\sum_{l=1}^n
\frac{\partial e^k}{\partial u^l} \tilde{B}_{ij}^l=
\frac{\partial {Q^k}_j}{\partial u^i}+\sum_{l=1}^n{Q^k}_l\tilde{B}_{ij}^l
=0
\quad (1\leq i,j,k\leq n)~.
\end{equation}
\begin{example}\label{example-zm-1}
For $G=\mathbb{Z}_m$ ($m\geq 2$), 
$n=1$, $x^1=(u^1)^m$, $d_1=m$. Therefore
$$e^1=\frac{1}{m(u^1)^{m-1}}~,\quad 
\tilde{B}_{11}^1=\frac{m}{u_1}~.
$$
\end{example}

\begin{example} \label{example-gm1n-1}
For $G=G(m,1,n)$ ($m\geq 3$, $n\geq 2$), 
$$
x^{\alpha}={\bf e}_{n+1-\alpha}((u^1)^m,\ldots,(u^n)^m)\quad 
(1\leq \alpha\leq n)~,
$$
where $ {\bf e}_{\alpha}$ denotes the $\alpha$-th elementary symmetric polynomial. 
The vector field $e=\frac{\partial}{\partial x^1}$ is given by
$$e=\sum_{k=1}^n e^k \frac{\partial}{\partial u^k}
~,\quad 
e^k=\frac{(-1)^{n+1}
}{m (u^k)^{m-1}}
\prod_{\begin{subarray}{c}1\leq  j\leq n;\\j\neq k\end{subarray}}
((u^k)^m-(u^j)^m)^{-1}~.
$$
See Corollary \ref{e-gm1n}.
Then it is not difficult to check that 
the following $\tilde{B}_{ij}^k$'s satisfy \eqref{B-u}:
\begin{equation}\nonumber
\begin{split}
\tilde{B}_{ii}^i
&=\sum_{l\neq i} \frac{m(u^i)^{m-1}}{((u^i)^m-(u^l)^m)}+\frac{m}{u_i}~,
\\
\tilde{B}_{ii}^k&=-\frac{m(u^i)^{m-2}u^k}{((u^i)^m-(u^k)^m)} \quad (i\neq k)
\\
\tilde{B}_{ij}^i&=\tilde{B}_{ji}^i=-\frac{m(u^j)^{m-1}}{(u^i)^m-(u^j)^m}
\quad (i\neq j)
\\
\tilde{B}_{ij}^k&=0 \quad (i\neq j\neq k\neq i)~.
\end{split}
\end{equation}
These are the structure constants of $\star$ with respect to
the $u$-coordinates.
\end{example}

Now, let $\ast$ and $\nabla$ be the multiplication 
and the connection on $TM^{\circ}$ dual to 
$\star$ and $\boldsymbol{\nabla}^V$ (see
\eqref{ass2ss-1} and \eqref{ass2ss-2}):
\begin{equation}\label{duality-dual}
e\star(X\ast Y)=X\star Y~,\quad
\nabla_X \,Y=\boldsymbol{\nabla}^V_X\,Y
-\boldsymbol{\nabla}^V_{X\ast Y}\,e~.
\end{equation}
Then $(\nabla,\ast,E)$ is a SS on $M^{\circ}$
and called  a natural SS for $G$. 
Notice that for this case, the converse relations
\eqref{ss2ass0} and \eqref{ss2ass} become
\begin{equation}\label{duality-dual2}
E\ast(X\star Y)= X\ast Y~,\quad 
\boldsymbol{\nabla}^V_X\, Y={\nabla}_X\,Y+\frac{1}{d_1}
X\star Y-\nabla_{X\star Y}\,E
~.
\end{equation}
In  \cite[Theorem 7.5 (3)]{KMS2018},  
the following theorem is proved.
\begin{theorem}\label{duality-polynomial}
The natural SS $(\nabla,\ast,E)$ for $G$ is polynomial i.e.,
\begin{enumerate}
\item[(i)] there exists a system of 
$\nabla$-flat coordinates $t=(t^1,t^2,\ldots, t^n)$ which is a set of basic invariants for $G$, and
\item[(ii)]
the structure constants of the multiplication with respect to the basis
$\frac{\partial}{\partial t^1},\ldots,\frac{\partial}{\partial t^n}$ 
are polynomials in $t$.
\end{enumerate}
\end{theorem}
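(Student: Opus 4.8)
The plan is to show that, although the dual Saito structure $(\nabla,\ast,E)$ of \eqref{duality-dual} is produced only over the open set $M^{\circ}$, both the connection $\nabla$ and the multiplication $\ast$ in fact extend across the discriminant $M\setminus M^{\circ}$ with polynomial coefficients in the basic invariants $x$; granting this, polynomiality of the $\nabla$-flat coordinates and of the structure constants is almost automatic, and the invariant theory of $G$ forces the flat coordinates to be a set of basic invariants.

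First I would record the homogeneity. The trivial connection $\boldsymbol{\nabla}^{V}$ of \eqref{trivial-connection-u}, the vector field $e=\partial/\partial x^{1}$, and the multiplication $\star$ of \eqref{star-determined} are all homogeneous with respect to $E=\tfrac{1}{d_{1}}E_{\mathrm{deg}}$, so by \eqref{duality-dual} $\nabla$ and $\ast$ are homogeneous as well: written in the $x$-coordinates, the Christoffel symbols $\Gamma^{\gamma}_{\alpha\beta}$ of $\nabla$ and the structure constants $c^{\gamma}_{\alpha\beta}$ of $\ast$ are homogeneous of $u$-degree $d_{\gamma}-d_{\alpha}-d_{\beta}$ and $d_{\gamma}+d_{1}-d_{\alpha}-d_{\beta}$ respectively (the latter using that $\ast$ raises degree by $d_{1}$, by axiom (SS2)). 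Since $M\cong\mathbb{C}^{n}$, a holomorphic function on $M$ that is homogeneous of integer $u$-degree $d$ is a homogeneous polynomial of degree $d$ (identically zero if $d<0$). Hence any coefficient $\Gamma^{\gamma}_{\alpha\beta}$ or $c^{\gamma}_{\alpha\beta}$ that extends \emph{holomorphically} from $M^{\circ}$ to all of $M$ is automatically a \emph{polynomial} in $x$, and the whole theorem reduces to proving this holomorphic extension.

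Assuming that, the conclusion is formal. The extended $\nabla$ is a torsion-free flat connection on the simply connected space $M\cong\mathbb{C}^{n}$, so global $\nabla$-flat coordinates $t=(t^{1},\dots,t^{n})$ exist; they are entire, and since $\nabla$ is homogeneous the $\nabla$-parallel frame can be chosen from eigenvectors of the scaling action, i.e.\ the $t^{\alpha}$ can be taken homogeneous, hence polynomial in $x$, hence $G$-invariant. As $\nabla e=0$ by (SS3) and $e=\partial/\partial x^{1}$ with $d_{1}>d_{2}$, one normalizes $t^{1}=x^{1}$. By (SS4) the endomorphism $\mathcal{V}\colon X\mapsto\nabla_{X}E$ is $\nabla$-parallel, so in the flat frame $\mathcal{V}$ is the constant diagonal matrix $\tfrac{1}{d_{1}}\mathrm{diag}(\deg t^{1},\dots,\deg t^{n})$; on the other hand \eqref{duality-dual}, \eqref{duality-dual2} and $\boldsymbol{\nabla}^{V}_{X}E=\tfrac{1}{d_{1}}X$ give $\mathcal{V}=\tfrac{1}{d_{1}}\mathrm{id}-Q\circ\mathcal{P}^{-1}$ with $\mathcal{P}=e\star$ and $Q\colon X\mapsto\boldsymbol{\nabla}^{V}_{X}e$ invertible (Theorem~\ref{thm:KMS}), and using the duality relation (D1) one identifies $\deg t^{\alpha}=d_{\alpha}$. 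Then $t^{1},\dots,t^{n}$ are algebraically independent homogeneous $G$-invariants of degrees $d_{1},\dots,d_{n}$, so $\mathbb{C}[t]$ and $\mathbb{C}[x]$ have the same Poincar\'e series $\prod_{\alpha}(1-q^{d_{\alpha}})^{-1}$; together with $\mathbb{C}[t]\subseteq\mathbb{C}[x]$ this forces $\mathbb{C}[t]=\mathbb{C}[x]$, which is part (i). Part (ii) follows at once, since in the $t$-coordinates the structure constants of $\ast$ are obtained from the polynomial $c^{\gamma}_{\alpha\beta}$ by the polynomial change of coordinates between $x$ and $t$.

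The substantial point, and the main obstacle, is the holomorphic extension of $\nabla$ and $\ast$ across $M\setminus M^{\circ}$; this is exactly where property (D3) of the duality group is indispensable, as it already was in Theorem~\ref{thm:KMS}. Property (D3) — the discriminant being monic of degree $n$ in $x^{1}$ — pins down the (generically square-root) ramification of the orbit map $\pi$ and therefore the precise pole orders along $M\setminus M^{\circ}$ of $\boldsymbol{\nabla}^{V}$, $e$ and $\star$ in the $x$-coordinates; the work is to verify that in the combinations \eqref{duality-dual} these singular parts cancel, leaving regular data on all of $M$. I expect this cancellation to be the only genuinely hard step; the rest is formal manipulation of the Saito axioms and bookkeeping with degrees.
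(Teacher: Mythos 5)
Your proposal reduces the theorem to a single unproven claim --- that the connection $\nabla$ and the multiplication $\ast$, a priori defined only on $M^{\circ}$ by \eqref{duality-dual}, extend holomorphically across the discriminant $M\setminus M^{\circ}$ --- and then explicitly defers that claim (``I expect this cancellation to be the only genuinely hard step''). But that claim \emph{is} the theorem: once the coefficients are entire, homogeneity makes them polynomials and the rest is, as you say, essentially formal. Identifying where the difficulty lies is not the same as resolving it, so the proposal has a genuine gap at its core. Nothing in your sketch explains \emph{why} the singular contributions of $\boldsymbol{\nabla}^{V}$ written in the $x$-coordinates (its Christoffel symbols involve $\partial u^{i}/\partial x^{\alpha}$, hence the inverse Jacobian, which has poles along the discriminant) are cancelled by the term $\boldsymbol{\nabla}^{V}_{X\ast Y}\,e$; invoking (D3) by name does not supply that computation.

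For what it is worth, the paper does not reprove this statement either: it is quoted from \cite[Theorem 7.5(3)]{KMS2018}. The mechanism of the actual argument can be read off from the in-paper analogue, the proof of Theorem \ref{cs-polynomial} in \S\ref{cs-polynomial-proof}: one works entirely in the matrix representation, shows by degree bounds and the condition on degrees that the relevant connection matrices (there $\Upsilon_{\alpha}=\tfrac{1}{2}A^{-1}\partial_{\alpha}A$) are strictly upper triangular with polynomial entries, and then explicitly integrates $\partial_{\alpha}X+\Upsilon_{\alpha}X=O$ to produce a polynomial unitriangular gauge transformation to flat coordinates. Regularity is thus established by exhibiting polynomial matrices satisfying the structure equations, not by an abstract cancellation-of-singular-parts argument. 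Your surrounding formal steps (homogeneity forcing polynomiality of entire coefficients; global flat coordinates on the simply connected $M$; matching degrees to conclude that the $t^{\alpha}$ form a set of basic invariants via the Poincar\'e series) are sound in outline, although the identification $\deg t^{\alpha}=d_{\alpha}$ is more directly obtained from the invertibility of the Jacobian $\partial t/\partial x$ at the origin than from your partly justified eigenvalue computation for $X\mapsto\nabla_{X}E$.
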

Therefore the natural SS $(\nabla,\ast, E)$ is 
canonically extended to the whole orbit space  $M$.

\section{The Coxeter--Shephard Frobenius structures}
\label{section:CS-groups}
\subsection{The finite Coxeter groups and the Shephard groups}
The duality groups
include
the finite Coxeter groups\footnote
{Among the duality groups, the finite Coxeter groups are characterized by the
property that $d_n=2$.} and the Shephard groups.
For an irreducible finite complex reflection group $G$, 
the following conditions are equivalent (see \cite[Theorem 6.121]{OrlikTerao}):
\begin{enumerate}
\item[(CS1)] $G$ is a finite Coxeter group or a Shephard group.
\item[(CS2)] $\mathrm{Hess}(x^n): \mathrm{Der}_{\mathbb{C}[V]}^G\to \Omega_{\mathbb{C}[V]}^G$
is an isomorphism.
\item[(CS3)] 
$
d_{\alpha}+d_{n+1-\alpha}=d_1+d_n$ ($1\leq \alpha\leq n$).
\end{enumerate}
In this section, $G$ is a finite Coxeter group
or a Shephard group.
Let $x^1,\ldots,x^n$ be a set of basic invariants for $G$.

From the classification, we can see that
the strict inequalities
\begin{equation}\label{degrees}
d_1>d_2>\ldots>d_n~
\end{equation}
hold for a finite Coxeter group or a Shephard group $G$.
The condition (CS3)
and eq.\,\eqref{degrees}
together imply that 
\begin{equation}\label{cs-degrees}
d_{\alpha}+d_{\beta}>d_1+d_n \quad 
(\alpha+\beta<n+1)~,\quad
d_{\alpha}+d_{\beta}<d_1+d_n 
\quad (\alpha+\beta>n+1)~.
\end{equation}

Now let $h$ be a symmetric bilinear form 
on $TM^{\circ}$
corresponding to the map $\mathrm{Hess}(x^n)$, i.e.
\begin{equation}\label{def-h}
h(\cdot,\cdot)=\langle \mathrm{Hess}(x^n)(\cdot),\cdot \rangle~.
\end{equation}
The condition (CS2) implies that  $h$
is 
a metric on $TM^{\circ}$.
Let $\boldsymbol{\nabla}^{cs}$ be 
the Levi--Civita connection for the metric $h$.
In the local $u$-coordinates,  $h$ is given by
\begin{equation}\label{hessian-metric-u}
\tilde{H}_{ij}:=
h\left(\frac{\partial}{\partial u^i},\frac{\partial}{\partial u^j}
\right)
=
\left\langle
\mathrm{Hess}(x^n) \left(\frac{\partial}{\partial u^i}\right),
\frac{\partial}{\partial u^j}
\right\rangle
=
\frac{\partial^2 x^n}{\partial u^i\partial u^j}~,
\end{equation}
where $\langle~~,~~\rangle$ is the canonical pairing.
The Levi--Civita connection $\boldsymbol{\nabla}^{cs}$ is 
expressed as
\begin{equation}\label{christoffel-u}
\begin{split}
\boldsymbol{\nabla}^{cs}_{\frac{\partial}{\partial {u^i}}} \left(\frac{\partial}{\partial u^j}\right)
&=\sum_{k=1}^n \tilde{S}_{ij}^{k} 
\frac{\partial}{\partial u^k}
~,\quad
\tilde{S}_{ij}^k=
\frac{1}{2}\sum_{l=1}^n \tilde{H}^{kl}
\frac{\partial^3 x^n}{\partial u^i\partial u^j\partial u^l}
\end{split}
\end{equation}
Here $\tilde{H}^{ij}$ ($1\leq i,j\leq n$) denotes
the $(i,j)$ entry of the inverse matrix $\tilde{H}^{-1}$
of $\tilde{H}=(\tilde{H}_{ij})$.
From this expression,
we can immediately see that 
$\tilde{S}_{ij}^k=0$ holds
if $d_n =2$.
In other words, $\boldsymbol{\nabla}^{cs}=\boldsymbol{\nabla}^V$ 
if $G$ is a finite Coxeter group.

\begin{example}\label{example-zm-2}
For $G=\mathbb{Z}_m$ ($m\geq 2$), 
$n=1$, $x^1=(u^1)^m$, $d_1=m$. Therefore
$$\tilde{H}_{11}=m(m-1)(u^1)^{m-2}~,\quad 
\tilde{S}_{11}^1=\frac{m-2}{2u^1}~.
$$
\end{example}

\begin{example}\label{example-gm1n-2}
For $G=G(m,1,n)$ $(m\geq 3,n\geq 2)$, 
$x^n=(u^1)^m+\cdots+(u^n)^m$.  Therefore
\begin{equation}\nonumber
\tilde{H}_{ij}=\begin{cases}m(m-1)(u^i)^{m-2} &(i=j)\\0&(i\neq j)\end{cases}~,
\quad
\end{equation}
and 
\begin{equation}\nonumber\label{S-Gm1n}
\tilde{S}_{ij}^k=\begin{cases}\frac{m-2}{2u^i} &(i=j=k)\\
0&(\text{otherwise})\end{cases}~.
\end{equation}
\end{example}

\begin{lemma} \label{cs-flat}
\begin{enumerate}
\item
$\boldsymbol{\nabla}^{cs}$ is flat. 
\item
$\boldsymbol{\nabla}^{cs}$ and $E$ 
$($given in \eqref{degree-operator}$)$
satisfy
$(${\bf ASS3}$)$ with $r=\frac{d_n}{2d_1}$: 
$$
\boldsymbol{\nabla}^{cs}_X E=\frac{d_n}{2d_1} X \quad (X\in 
\mathcal{T}_{M^{\circ}})~.
$$
\end{enumerate}
\end{lemma}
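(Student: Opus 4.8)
The plan is to prove both statements by working in the local $u$-coordinates, where the metric $h$ has the explicit Hessian form \eqref{hessian-metric-u} and the connection coefficients are given by \eqref{christoffel-u}. For part (1), flatness of $\boldsymbol{\nabla}^{cs}$, the key observation is that $h$ is a \emph{Hessian metric}: in the $u$-coordinates, $\tilde H_{ij}=\partial^2 x^n/\partial u^i\partial u^j$ is the Hessian of the single function $x^n$. For such a metric the Levi--Civita connection coefficients are $\tilde S_{ij}^k=\frac12\sum_l\tilde H^{kl}\,\partial_i\partial_j\partial_l x^n$, and a direct computation of the Riemann curvature tensor shows it vanishes: writing $C_{ijl}=\partial_i\partial_j\partial_l x^n$ (totally symmetric), the curvature of a metric of the form $\tilde H_{ij}=\partial_i\partial_j F$ is controlled by the ``associativity-type'' combination $\sum_{p,q}\tilde H^{pq}(C_{ikp}C_{jlq}-C_{jkp}C_{ilq})$, which is manifestly antisymmetric in $i\leftrightarrow j$ in a way that forces it to cancel — this is the standard fact that a Hessian metric whose potential's third derivatives are the structure constants of a commutative associative product (here the product read off from $x^n$ via the (CS2)-isomorphism) is flat; equivalently, one checks that the $u$-coordinates together with a dual potential give affine flat coordinates. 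Alternatively, and perhaps more cleanly, one can invoke that (CS2) is exactly the hypothesis under which $x^n$ plays the role of a ``potential'' and cite the classical computation; but since the excerpt wants a self-contained argument, I would carry out the curvature computation using the symmetry of $C_{ijk}$.

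For part (2), I would compute $\boldsymbol{\nabla}^{cs}_X E$ directly in $u$-coordinates with $E=\frac{1}{d_1}\sum_i u^i\partial_{u^i}$. We have
\begin{equation}\nonumber
\boldsymbol{\nabla}^{cs}_{\partial_{u^i}}E=\frac{1}{d_1}\Bigl(\partial_{u^i}+\sum_{j,k}u^j\tilde S_{ij}^k\partial_{u^k}\Bigr),
\end{equation}
so the claim $\boldsymbol{\nabla}^{cs}_XE=\frac{d_n}{2d_1}X$ is equivalent to the identity $\sum_{j}u^j\tilde S_{ij}^k=\bigl(\frac{d_n}{2}-1\bigr)\delta_i^k$ for all $i,k$, i.e. to $\frac12\sum_{j,l}u^j\tilde H^{kl}C_{ijl}=\bigl(\frac{d_n}{2}-1\bigr)\delta_i^k$, or after multiplying by $\tilde H$, to $\frac12\sum_j u^j C_{ijl}=\bigl(\frac{d_n}{2}-1\bigr)\tilde H_{il}$. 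Now $C_{ijl}=\partial_{u^i}\partial_{u^j}\partial_{u^l}x^n$ and $x^n$ is homogeneous of degree $d_n$, so $\partial_{u^i}\partial_{u^l}x^n=\tilde H_{il}$ is homogeneous of degree $d_n-2$; applying the Euler operator $E_{\mathrm{deg}}=\sum_j u^j\partial_{u^j}$ to it and using \eqref{degree-operator} gives exactly $\sum_j u^j C_{ijl}=(d_n-2)\tilde H_{il}$, which is the required identity. So part (2) reduces to Euler's relation for homogeneous functions and the bookkeeping of degrees, with no obstacle.

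The main obstacle is therefore part (1). The curvature computation for a general Hessian metric is not automatically zero — flatness genuinely uses (CS2), which guarantees that the map $\mathrm{Hess}(x^n)$ is an isomorphism of the invariant modules, equivalently that the ``dual'' structure constants $\sum_l\tilde H^{kl}C_{ijl}$ define an associative product. I expect the cleanest route is: (a) observe that the coefficients $\tilde S_{ij}^k$ and $\tilde H_{ij}$ are $G$-invariant, hence descend to $M^\circ$; (b) show the product $\partial_{u^i}\bullet\partial_{u^j}=\sum_k(\sum_l\tilde H^{kl}C_{ijl})\partial_{u^k}$ is associative — this follows because $C_{ijl}$ are third derivatives of a single function, so associativity is the integrability/WDVV-type condition which here is automatic from commutativity of mixed partials once one notes there is no ``potential mismatch'' (i.e. the structure is flat by construction of $x^n$ as an invariant); (c) conclude flatness of $\boldsymbol{\nabla}^{cs}$ from the standard fact that the Levi--Civita connection of a Hessian metric whose third-derivative tensor yields an associative product is flat, by exhibiting flat coordinates or by the direct curvature cancellation. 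If a fully self-contained curvature computation is preferred, I would expand $R^m_{\ ijk}=\partial_i\tilde S_{jk}^m-\partial_j\tilde S_{ik}^m+\sum_p(\tilde S_{ip}^m\tilde S_{jk}^p-\tilde S_{jp}^m\tilde S_{ik}^p)$, substitute \eqref{christoffel-u}, use $\partial_p\tilde H^{kl}=-\sum_{a,b}\tilde H^{ka}\tilde H^{lb}C_{pab}$, and check that the four-index tensor that survives is symmetric under $i\leftrightarrow j$ and hence vanishes; the symmetry is exactly where (CS2), i.e. invertibility of $\tilde H$ and the resulting associativity, enters.
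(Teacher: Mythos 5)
Your treatment of part (2) is correct and is essentially the paper's own argument: after writing $\boldsymbol{\nabla}^{cs}_{\partial_{u^i}}E=\frac{1}{d_1}\bigl(\partial_{u^i}+\sum_{j,k}u^j\tilde S_{ij}^k\partial_{u^k}\bigr)$, the required identity $\sum_j u^j\tilde S_{ij}^k=\frac{d_n-2}{2}\delta_i^k$ reduces, upon contracting with $\tilde H$, to Euler's relation $\sum_j u^j\partial_{u^j}\tilde H_{il}=(d_n-2)\tilde H_{il}$ for the homogeneous function $\tilde H_{il}=\partial_{u^i}\partial_{u^l}x^n$ of degree $d_n-2$. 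No issues there.

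Part (1), however, contains a genuine gap. You correctly observe that the curvature of the Levi--Civita connection of the Hessian metric $\tilde H_{ij}=\partial_i\partial_j x^n$ collapses to the commutator term $-[\tilde S_i,\tilde S_j]$ (the fourth-derivative contributions cancel by symmetry of mixed partials), so flatness is equivalent to the vanishing of $[\tilde S_i,\tilde S_j]$, i.e.\ to associativity of the product with structure constants $2\tilde S_{ij}^k=\sum_l\tilde H^{kl}C_{ijl}$. But your justification of that associativity --- that it is ``automatic from commutativity of mixed partials'' because the $C_{ijl}$ are third derivatives of a single function --- is false. This is precisely the WDVV system for the potential $x^n$ with respect to its own Hessian metric, a nontrivial PDE constraint; a generic Hessian metric in $n\ge 2$ variables is not flat. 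Nor does (CS2) supply it: (CS2) asserts only that $\mathrm{Hess}(x^n)$ is an isomorphism of the invariant modules, i.e.\ nondegeneracy of $\tilde H$, and says nothing about associativity. (Your remark that ``the structure is flat by construction of $x^n$'' is circular, since flatness is what is to be proved.) Tellingly, the paper offers no conceptual proof at this point: it verifies \eqref{flatness-u} case by case --- trivially for the finite Coxeter groups ($d_n=2$ forces $\tilde S_i=O$), by the explicit formulas of Examples \ref{example-zm-2} and \ref{example-gm1n-2} for $\mathbb{Z}_m$ and $G(m,1,n)$, and by a Mathematica computation for the remaining exceptional Shephard groups. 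The step you declare automatic is exactly the hard content of the lemma, and your argument as written does not establish it.
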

\begin{proof} 
Let us set 
$$
{(\tilde{S}_i)^{k}}_j=\tilde{S}_{ij}^k \quad (1\leq i\leq n)~.
$$
(1) The flatness is equivalent to 
\begin{equation}\label{flatness-u}
\frac{\partial \tilde{S}_j}{\partial u^i} 
-\frac{\partial \tilde{S}_i}{\partial u^j} 
+
\tilde{S}_i\tilde{S}_j-\tilde{S}_j\tilde{S}_i
\stackrel{\eqref{christoffel-u}}{=}
-(\tilde{S}_i\tilde{S}_j-\tilde{S}_j\tilde{S}_i)
=O~.
\end{equation}
If $G$ is a finite Coxeter group (i.e. if $d_n=2$),
then $\tilde{S}_i=O$
 ($1\leq i\leq n$) by \eqref{christoffel-u}.
In the cases  $G=G_3\cong \mathbb{Z}_m$ ($m\geq 1$)
and   $G=G(m,1,n)$ $(m\geq 3,n\geq 2)$, 
it is easy to check that $\tilde{S}_{ij}^k$'s obtained in
Example \ref{example-zm-2} and Example \ref{example-gm1n-2}
satisfy \eqref{flatness-u}.
For the remaining exceptional groups, 
$x^n$ can be found in \cite[Chapter 6]{LehrerTaylor},
\cite[\S B.3]{OrlikTerao}. 
We checked 
that \eqref{flatness-u} holds 
using Mathematica.
\\
(2) Recall that the derivation 
$E_{\mathrm{deg}}$  in \eqref{euler-vf} acts on a
homogeneous  polynomial $f\in \mathbb{C}[x]=\mathbb{C}[u]^G$ by 
$E_{\mathrm{deg}}(f)=(\mathrm{deg}\,f)\,f$ (see
 \eqref{degree-operator}).  So we have
\begin{equation}\nonumber
\begin{split}
\boldsymbol{\nabla}^{cs}_{\frac{\partial}{\partial u^i}}
E&=
\frac{1}{d_1}\left(
\frac{\partial}{\partial u^i}
+\frac{1}{2}\sum_{j,k=1}^n u^j 
\tilde{S}_{ij}^k\frac{\partial}{\partial u^k}
\right)
\\
&\stackrel{\eqref{christoffel-u}}{=}
\frac{1}{d_1}\left(
\frac{\partial}{\partial u^i}
+\frac{1}{2}\sum_{j,k,l=1}^n
u^j \frac{\partial  \tilde{H}_{il}}{\partial u^j} \cdot
\tilde{H}^{kl}
 \frac{\partial}{\partial u^k}
\right)
\\
&=\frac{1}{d_1}\left(
\frac{\partial}{\partial u^i}
+\frac{d_n-2}{2}\sum_{k,l=1}^n
\tilde{H}_{il}
\tilde{H}^{kl}
 \frac{\partial}{\partial u^k}
\right)
\\&=\frac{1}{d_1}\left(
\frac{\partial}{\partial u^i}
+\frac{d_n-2}{2}
 \frac{\partial}{\partial u^i}
\right)
=\frac{d_n}{2d_1} \frac{\partial}{\partial u^i}~.
\end{split}
\end{equation}
\end{proof}

\begin{theorem}\label{theorem1}
\begin{enumerate}
\item The endomorphism
$$
T_pM^{\circ}\to T_pM^{\circ}~,\quad X\mapsto \boldsymbol{\nabla}^{cs}_X\,e
$$
of the tangent space $T_p M^{\circ}$ 
is invertible at every point $p\in M^{\circ}$. Therefore 
the following condition uniquely determines
the multiplication $\diamond$ on $TM^{\circ}$:
\begin{equation}\label{ass4-S}
\boldsymbol{\nabla}^{cs}_X\boldsymbol{\nabla}^{cs}_Y \,e-
\boldsymbol{\nabla}_{\boldsymbol{\nabla}^{cs}_X Y}^{cs}\,e+
\boldsymbol{\nabla}_{X\diamond Y}^{cs}\,e=0\qquad (X,Y\in \mathcal{T}_{M^{\circ}})~.
\end{equation}

\item The multiplication 
$\diamond$ is associative and commutative and has the unit
$E$. 
\item $(\boldsymbol{\nabla}^{cs},\diamond,e)$ is an ASS
with parameter $\frac{d_n}{2d_1}$ on $M^{\circ}$.
\item
$(h,\diamond,e)$ is an almost Frobenius structure of charge 
$1-\frac{d_n}{d_1}$ on $M^{\circ}$.
\end{enumerate}
\end{theorem}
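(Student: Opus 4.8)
The plan is to obtain parts (1)--(3) by running the mechanism behind Theorem \ref{thm:KMS} with the Levi--Civita connection $\boldsymbol{\nabla}^{cs}$ of the Hessian metric $h$ replacing the trivial connection $\boldsymbol{\nabla}^V$, and then to add the metric axioms for part (4). Two of the needed facts are already available: Lemma \ref{cs-flat} says $\boldsymbol{\nabla}^{cs}$ is flat and torsion free and that $\boldsymbol{\nabla}^{cs}_X E=\tfrac{d_n}{2d_1}X$, which is precisely (ASS3) with $r=\tfrac{d_n}{2d_1}$; and by \eqref{def-h} together with the fact that $\boldsymbol{\nabla}^{cs}$ is the Levi--Civita connection of $h$, condition \eqref{af1} holds automatically. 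Once part (1) is known, so that \eqref{ass4-S} defines $\diamond$, commutativity of $\diamond$ is immediate: torsion-freeness and flatness of $\boldsymbol{\nabla}^{cs}$ make the right-hand side of $\boldsymbol{\nabla}^{cs}_{X\diamond Y}e=-\boldsymbol{\nabla}^{cs}_X\boldsymbol{\nabla}^{cs}_Y e+\boldsymbol{\nabla}^{cs}_{\boldsymbol{\nabla}^{cs}_X Y}e$ symmetric in $X,Y$, its antisymmetric part being the curvature term $R^{cs}(X,Y)e$, which vanishes because $\boldsymbol{\nabla}^{cs}$ is flat. It is convenient to carry out the computations in the flat coordinates $t=(t^1,\dots,t^n)$ of the natural Saito structure --- which are basic invariants by Theorem \ref{duality-polynomial}, and in which $e=\tfrac{\partial}{\partial t^1}$ and $E=\tfrac{1}{d_1}E_{\mathrm{deg}}$ --- translating the conditions into the matrix form of \S\ref{matrix-representation}.

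For part (1) I would use that $\boldsymbol{\nabla}^{cs}$ is $h$-metric. Writing $\xi:=h(e,\,\cdot\,)$ for the $1$-form $h$-dual to $e$, one has $h(\boldsymbol{\nabla}^{cs}_X e,Y)=(\boldsymbol{\nabla}^{cs}_X\xi)(Y)$, so $X\mapsto\boldsymbol{\nabla}^{cs}_X e$ is invertible exactly when the bilinear form $(X,Y)\mapsto(\boldsymbol{\nabla}^{cs}_X\xi)(Y)$ is nondegenerate. Using $e=\tfrac{\partial}{\partial x^1}$ and the identity $\sum_i e^i\,\tfrac{\partial x^n}{\partial u^i}=\tfrac{\partial x^n}{\partial x^1}=0$ one writes $\xi$ down explicitly, and the nondegeneracy of $\boldsymbol{\nabla}^{cs}\xi$ reduces to the non-vanishing on $M^{\circ}$ of a $G$-invariant homogeneous function whose degree is fixed by $d_1,\dots,d_n$. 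This is the analogue of \cite[Corollary 7.3]{KMS2018}, where the same statement for $\boldsymbol{\nabla}^V$ was deduced from (D3); here the role of (D3) is taken by (CS2) --- the isomorphism property of $\mathrm{Hess}(x^n)$ --- together with the strict inequalities \eqref{degrees} and the sign conditions \eqref{cs-degrees}. This step does not reduce to the known case for $\boldsymbol{\nabla}^V$, since $\boldsymbol{\nabla}^{cs}\neq\boldsymbol{\nabla}^V$ whenever $d_n>2$.

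Granting (1), parts (2) and (3) follow the template of \cite{KMS2018} with no new input: differentiating \eqref{ass4-S} and using flatness of $\boldsymbol{\nabla}^{cs}$ yields the unit axiom $E\diamond X=X$ and associativity; combining \eqref{ass4-S} for $(X,Y)$ and $(Y,X)$ with the definition of $\diamond$ gives (ASS1); and (ASS2) then follows from (ASS1), (ASS3) and the homogeneity of $E$, $e$, $\boldsymbol{\nabla}^{cs}$ and $\diamond$ ($\boldsymbol{\nabla}^{cs}_X Y$ being homogeneous of degree $\deg X+\deg Y$ by \eqref{christoffel-u}). This gives (3). For (4) it remains to verify \eqref{af2} and \eqref{af3} and to read off the charge from $r=\tfrac{1-D}{2}=\tfrac{d_n}{2d_1}$, i.e. $D=1-\tfrac{d_n}{d_1}$. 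Condition \eqref{af3} is a formal consequence of (ASS2), of $\boldsymbol{\nabla}^{cs}_X E=\tfrac{d_n}{2d_1}X$ and of $h$ being the Hessian of the degree-$d_n$ polynomial $x^n$ (this is essentially the computation in the proof of Lemma \ref{cs-flat}). The substantive point is the Frobenius identity \eqref{af2}, $h(X\diamond Y,Z)=h(X,Y\diamond Z)$; for this I would use \eqref{ass4-S} to express $\mathrm{Hess}(x^n)(X\diamond Y)$ in terms of $\boldsymbol{\nabla}^{cs}$-derivatives of $e$ and invoke (CS2), or, equivalently, identify $(h,\diamond,e)$ with the almost dual, in the sense of \cite{Dubrovin2004}, of the Coxeter--Shephard Frobenius structure.

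The main obstacle is the invertibility in step (1), with the $h$-invariance \eqref{af2} a close second. The remaining points are either automatic (\eqref{af1}) or reduce to the same flatness-and-homogeneity bookkeeping as in \cite{KMS2018} (parts (2), (3) and \eqref{af3}), whereas the non-vanishing of $\det\bigl(X\mapsto\boldsymbol{\nabla}^{cs}_X e\bigr)$ on all of $M^{\circ}$ --- and, likewise, the compatibility of $\diamond$ with $h$ --- are exactly where the hypothesis that $G$ is a finite Coxeter group or a Shephard group, i.e. (CS1)--(CS3), is used essentially, and where the argument cannot be reduced to the corresponding facts for $\boldsymbol{\nabla}^V$.
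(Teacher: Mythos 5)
Your overall frame (work in the $\nabla$-flat $t$-coordinates, use Lemma \ref{cs-flat} for ({\bf ASS3}) and \eqref{af1}, get commutativity of $\diamond$ from flatness and torsion-freeness) matches the paper, but the proposal has gaps exactly at the points you yourself flag as the main obstacles, and it misses the structural identity the paper uses to close them. For part (1) you reduce invertibility of $X\mapsto\boldsymbol{\nabla}^{cs}_X e$ to nondegeneracy of $\boldsymbol{\nabla}^{cs}\xi$ with $\xi=h(e,\cdot)$, and then assert that this ``reduces to the non-vanishing on $M^{\circ}$ of a $G$-invariant homogeneous function.'' That is the statement to be proved, not an argument: a $G$-invariant homogeneous (rational) function can perfectly well vanish on a divisor meeting $M^{\circ}$. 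The paper's proof computes the representation matrix $S_1$ of this endomorphism and finds $2S_1=(-I-W+A^{-1}WA)U^{-1}$ (Lemma \ref{lemma-S} with $\partial_1 A=O$, $B_1=U^{-1}$); the first factor is upper triangular with constant nonzero diagonal entries $(d_n-2d_{\mu})/d_1<0$ by Lemma \ref{lemma:A-1} (which rests on (CS3) and \eqref{degrees}), while $\det U$ is the discriminant, nonzero precisely on $M^{\circ}$. You would need an equally explicit identification of your determinant --- in particular a reason why its zero locus lies in the discriminant and why the residual constant is nonzero --- and the proposal supplies neither.

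Second, the treatment of ({\bf ASS1}) and \eqref{af2} does not work as stated. Symmetrizing \eqref{ass4-S} in $(X,Y)$ gives only commutativity of $\diamond$; ({\bf ASS1}) is the potentiality condition $\partial_{\alpha}B_{\beta}+[S_{\alpha},B_{\beta}]=\partial_{\beta}B_{\alpha}+[S_{\beta},B_{\alpha}]$, which in the paper requires a genuine computation resting on the self-adjointness $AB_{\alpha}={}^tB_{\alpha}A$ of Lemma \ref{lemma:A-2}. Likewise, identifying $(h,\diamond,e)$ with the almost dual of the CS Frobenius structure to obtain \eqref{af2} is circular here, since that Frobenius structure is what Theorem \ref{theorem1} is being used to construct. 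What the proposal is missing is the key relation $H_{\alpha\beta}=\frac{d_n-1}{d_1}B_{\alpha\beta}^n$, equivalently $A=HU=\frac{d_n-1}{d_1}(C_{\alpha\beta}^n)$ (see \eqref{H-B} and Lemma \ref{lemma:A-0}): it expresses the Hessian metric through the natural multiplication $\star$, yields Lemma \ref{lemma-S}, and leads to $B_{\alpha}^{cs}=-S_1^{-1}\partial_1 S_{\alpha}=B_{\alpha}$, i.e. $\diamond=\star$ (Theorem \ref{theorem:multiplication}). Once that is known, part (2), ({\bf ASS2}) and associativity are inherited from the natural ASS rather than re-proved by rerunning the \cite{KMS2018} template, and \eqref{af2}, \eqref{af3} reduce to $\sum_{\lambda}B_{\alpha\beta}^{\lambda}H_{\lambda\gamma}=\sum_{\lambda}B_{\beta\gamma}^{\lambda}H_{\alpha\lambda}$ and $\partial_1 H+HB_1=O$, which follow from \eqref{H-B}, \eqref{id-B} and \eqref{dB2}.
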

The proof of Theorem \ref{theorem1} will be given in 
\S \ref{section:proof-multiplication}.

Now let $\circledast$ be the multiplication dual to $\diamond$ and 
let $\nabla^{cs}$ be the connection dual to $\boldsymbol{\nabla}^{cs}$ 
(see \eqref{ass2ss-1}, \eqref{ass2ss-2}):
\begin{equation}\label{CS-dual}
e\diamond(X\circledast Y)=X\diamond Y~,
\quad
\nabla^{cs}_X \,Y=\boldsymbol{\nabla}^{cs}_X\,Y
-\boldsymbol{\nabla}^{cs}_{X\circledast Y}\,e~.
\end{equation}
Notice that  the converse relations
\eqref{ss2ass0} and \eqref{ss2ass} become
\begin{equation}\label{CS-dual2}
E\circledast(X\diamond Y)= X\circledast Y~,\quad
\boldsymbol{\nabla}^{cs}_X\, Y={\nabla}^{cs}_X\,Y+\frac{d_n}{2d_1}
X\diamond Y-\nabla_{X\diamond Y}\,E
~.
\end{equation}
Then the multiplication $\circledast$ has $e$ as the unit.
Moreover, by the almost duality,
$(\nabla^{cs},\circledast, E)$ is a SS on $M^{\circ}$.
The following theorem says that 
this SS can be extended to $M$.
\begin{theorem}\label{cs-polynomial}
The SS $(\nabla^{cs},\circledast, E)$ is a polynomial
SS on $M$.
\end{theorem}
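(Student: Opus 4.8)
The strategy is to mirror the proof of Theorem \ref{duality-polynomial} (i.e. \cite[Theorem 7.5]{KMS2018}) as closely as possible, replacing the trivial connection $\boldsymbol{\nabla}^V$ by the Hessian connection $\boldsymbol{\nabla}^{cs}$ and keeping track of the degree bookkeeping. First I would produce a system of $\boldsymbol{\nabla}^{cs}$-flat coordinates. Since $\boldsymbol{\nabla}^{cs}$ is the Levi--Civita connection of the metric $h$, flat coordinates are coordinates in which $h$ has constant coefficients. The condition (CS3), together with the strict inequalities \eqref{degrees} and the explicit form \eqref{hessian-metric-u} of $h$, forces $\tilde H_{ij}$ to pair $x^\alpha$-directions with $x^{n+1-\alpha}$-directions: $\langle \mathrm{Hess}(x^n)(\partial/\partial x^\alpha),\partial/\partial x^\beta\rangle$ is homogeneous of degree $d_n - d_\alpha - d_\beta$, which is $\le 0$ with equality exactly when $\alpha+\beta=n+1$, and equals $d_1$-graded... more precisely one checks it is a nonzero constant precisely on the anti-diagonal and a polynomial with no constant term off it. Hence, after a triangular polynomial change of the basic invariants $x^\alpha \mapsto t^\alpha$ (which by degree reasons has polynomial inverse), $h(\partial/\partial t^\alpha,\partial/\partial t^\beta)$ is a constant nondegenerate anti-diagonal matrix; these $t^\alpha$ are $\boldsymbol{\nabla}^{cs}$-flat and again form a set of basic invariants. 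This is essentially the content behind the existence of flat coordinates for the CS Frobenius metric in Saito's and Dubrovin's work, so I would invoke it in that form.

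Next I would show the structure constants of $\circledast$ are polynomial in $t$. By Theorem \ref{theorem1}, $(\boldsymbol{\nabla}^{cs},\diamond,e)$ is an ASS, and by \eqref{ass4-S} the multiplication $\diamond$ is determined by $\boldsymbol{\nabla}^{cs}_X\boldsymbol{\nabla}^{cs}_Y e - \boldsymbol{\nabla}^{cs}_{\boldsymbol{\nabla}^{cs}_X Y}e + \boldsymbol{\nabla}^{cs}_{X\diamond Y}e = 0$, i.e. $\diamond$ is obtained by applying $(X\mapsto\boldsymbol{\nabla}^{cs}_X e)^{-1}$ to a second-order expression in $e$. In the flat coordinates $t$, the connection coefficients of $\boldsymbol{\nabla}^{cs}$ vanish, $e=\partial/\partial t^1$ up to scalar, and the vector field $e$ (as in \eqref{duality-dual}) has components that are polynomial in $t$ with controlled degrees. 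The invertible matrix $Q^{cs} = (\partial e^k/\partial t^j)$ is then polynomial; I would argue, exactly as in the $\boldsymbol{\nabla}^V$ case, that $\det Q^{cs}$ is a nonzero constant (a degree computation using (CS3)), so $(Q^{cs})^{-1}$ is polynomial, hence the structure constants $B^{cs,k}_{ij}$ of $\diamond$ in $t$-coordinates are polynomial. Finally, passing to the dual structure via \eqref{CS-dual}: the structure constants of $\circledast$ are obtained from those of $\diamond$ by multiplying by $(E\diamond)^{-1}=(\mathcal E^{cs})^{-1}$ where $\mathcal E^{cs}$ is the matrix of $E\diamond\,$; since $E = \frac{1}{d_1}\sum_\alpha d_\alpha t^\alpha \partial/\partial t^\alpha$ is polynomial and $\det\mathcal E^{cs}$ is again a nonzero constant by a degree argument (the unit $e$ and Euler field $E$ make $\mathcal E^{cs}$ behave like $E\ast$ in a Frobenius manifold, whose determinant is the discriminant up to a constant — but here we only need it is a nonzero constant in the relevant computation, or invoke that $(h,\diamond,e)$ almost-Frobenius forces $E\diamond$ invertible on $M^\circ$ and polynomiality extends it), the structure constants of $\circledast$ are polynomial in $t$. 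Likewise $\nabla^{cs}_X Y = \boldsymbol{\nabla}^{cs}_X Y - \boldsymbol{\nabla}^{cs}_{X\circledast Y}e$ has polynomial coefficients in $t$, so $\nabla^{cs}$ is a polynomial connection, and in fact the $t^\alpha$ remain $\nabla^{cs}$-flat by a short check analogous to \cite[Theorem 7.5]{KMS2018} (or one simply exhibits new flat coordinates). Therefore $(\nabla^{cs},\circledast,E)$, a priori defined only on $M^\circ$, extends to a polynomial Saito structure on all of $M$.

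The main obstacle I anticipate is the determinant/degree bookkeeping for $\det Q^{cs}$ and $\det \mathcal E^{cs}$: in the $\boldsymbol{\nabla}^V$ case of \cite{KMS2018} the analogous facts used property (D3) (the discriminant being monic of degree $n$ in $x^1$), whereas here the relevant input is (CS2)--(CS3), and one must check that the Hessian $\mathrm{Hess}(x^n)$ being an isomorphism of $\mathbb{C}[V]^G$-modules, combined with the degree identity $d_\alpha + d_{n+1-\alpha} = d_1 + d_n$, pins down these determinants as nonzero constants. Concretely one computes the degree of each entry of $Q^{cs}$ in the $t$-coordinates, observes the degree matrix is again "anti-triangular" so the only contribution to the determinant of top total degree is the anti-diagonal product, and that this product is a nonzero constant; the off-diagonal polynomial contributions of lower degree cannot contribute to a nonzero constant determinant, hence cancel. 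Once this lemma is in place the rest is the same formal argument as for $\boldsymbol{\nabla}^V$, so I would state and prove this degree lemma first and then cite the pattern of \cite[Theorem 7.5]{KMS2018} for the remainder rather than repeating it.
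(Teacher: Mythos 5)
There is a genuine gap, and it stems from conflating the two connections $\boldsymbol{\nabla}^{cs}$ and $\nabla^{cs}$ (equivalently, the two metrics $h$ and $\eta$). The theorem requires $\nabla^{cs}$-flat coordinates, i.e.\ flat coordinates of the Frobenius metric $\eta(X,Y)=h(X,E\circledast Y)$, whereas your first step constructs $\boldsymbol{\nabla}^{cs}$-flat coordinates, i.e.\ flat coordinates of the Hessian metric $h$ itself. The degree bookkeeping you carry out is the one valid for $\eta$: indeed $\eta(\partial_\alpha,\partial_\beta)=A_{\alpha\beta}$ has degree $d_1+d_n-d_\alpha-d_\beta$, vanishing exactly on the anti-diagonal (Lemma \ref{lemma:A-1}). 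For $h$ the degree of $h(\partial/\partial x^\alpha,\partial/\partial x^\beta)$ is $d_n-d_\alpha-d_\beta$, which is strictly negative for all $\alpha,\beta$ (since $d_\alpha\geq d_n\geq 2$), and the entries are not polynomials, so no such conclusion can be drawn; in fact $h$ admits no flat coordinate system consisting of basic invariants --- already for a Coxeter group one has $\boldsymbol{\nabla}^{cs}=\boldsymbol{\nabla}^V$, whose flat coordinates are the linear coordinates $u^i$ on $V$, which are not $G$-invariant. The concluding ``short check'' that your coordinates remain $\nabla^{cs}$-flat cannot work either: $\nabla^{cs}$ and $\boldsymbol{\nabla}^{cs}$ differ by the term $\boldsymbol{\nabla}^{cs}_{X\circledast Y}\,e$ in \eqref{CS-dual}, and a coordinate system flat for one is in general not flat for the other.

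The second half has a further problem: the determinants you need to be nonzero constants are not. The operator $e\diamond$ (which is what must be inverted to pass from $\diamond$ to $\circledast$; note that $E\diamond$ is the identity, since $E$ is the unit of $\diamond$) has determinant proportional to the discriminant $\det U$, a nonconstant polynomial vanishing on $M\setminus M^{\circ}$; likewise the structure constants of $\diamond=\star$ themselves carry the discriminant in the denominator (see the remark after \eqref{B-C} and Example \ref{example-gm1n-1}). So polynomiality of $\circledast$ cannot be obtained by this naive dualization --- the cancellation of denominators is exactly the nontrivial content, and your degree argument does not supply it. The paper's route avoids all of this: by Theorem \ref{theorem:multiplication} one has $\circledast=\ast$, so polynomiality of the multiplication in the $\nabla$-flat coordinates $t$ is already given by Theorem \ref{duality-polynomial}, and the only remaining task concerns the connection. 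This is handled by computing the connection matrices $\Upsilon_{\alpha}=\tfrac12 A^{-1}\partial_{\alpha}A$ of $\nabla^{cs}$ in the $t$-coordinates, observing that they are strictly upper triangular with entries of degree $d_{\gamma}-d_{\alpha}-d_{\beta}$, and integrating $\partial_{\alpha}X+\Upsilon_{\alpha}X=O$ to a unitriangular polynomial gauge transformation (Lemma \ref{lemma-X}), which produces $\nabla^{cs}$-flat coordinates $s$ that are again basic invariants and under which the structure constants transform polynomially. None of these ingredients appear in your proposal, so as written it does not establish the theorem.
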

The proof  will be given in \S \ref{cs-polynomial-proof}.

Let $\eta$
be the metric dual to  $h$  (see \eqref{afs2fs}):
\begin{equation}\label{CS-dual-2}
\eta(X,Y)=h(X,E\circledast Y)~.
\end{equation}
Then $(\eta,\circledast, E)$ is a Frobenius structure on $M$
of charge  $D=1-\frac{d_n}{d_1}$
which has $(\nabla^{cs},\circledast, E)$ as the underlying Saito structure.
In this article, 
we call $(\eta,\circledast, E)$ the Coxeter--Shephard (CS) Frobenius structure.

\section{Relationship between the two Saito Structures}
\label{main-results}
Let 
$G$ be a finite Coxeter group or a Shephard group.

Recall that 
a finite Coxeter group or a Shephard group is a duality group.
Therefore  we have
two ASS's for $G$. The one is the natural ASS
$(\boldsymbol{\nabla}^V,\star,e)$ with parameter $\frac{1}{d_1}$
explained in \S \ref{section:duality-groups}
and the other is
$(\boldsymbol{\nabla}^{cs},\diamond,e)$ 
with parameter $\frac{d_n}{2d_1}$
explained in \S \ref{section:CS-groups}.
So it is natural to ask whether they are the same or not.
It is clear that
the parameters $\frac{d_n}{2d_1}$ and
$\frac{1}{d_1}$ agree if and only if $d_n=2$ i.e. $G$ is a
finite Coxeter group.
The connection $\boldsymbol{\nabla}^{cs}$
agree with $\boldsymbol{\nabla}^V$ agree if and only if 
$(\tilde{H}_{ij})$ is a constant matrix, 
i.e. $d_n=2$.  
As for the multiplication, 
for any finite Coxeter group or any Shephard group $G$,
we have the following
\begin{theorem}\label{theorem:multiplication}
The multiplication $\star$ and 
 the multiplication $\diamond$ are the same.
\end{theorem}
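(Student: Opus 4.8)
The plan is to show that the natural multiplication $\star$ already satisfies equation \eqref{ass4-S} — the condition that, by Theorem \ref{theorem1}(1), uniquely determines $\diamond$; since $X\mapsto\boldsymbol{\nabla}^{cs}_X\,e$ is invertible, this forces $\star=\diamond$. When $G$ is a finite Coxeter group, i.e. $d_n=2$, there is nothing to do: then $\tilde{S}_{ij}^k=0$ by \eqref{christoffel-u}, so $\boldsymbol{\nabla}^{cs}=\boldsymbol{\nabla}^{V}$ and \eqref{ass4-S} is literally \eqref{star-determined}. The content of the theorem is therefore in the Shephard case, where $\boldsymbol{\nabla}^{cs}\neq\boldsymbol{\nabla}^{V}$.

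First I would introduce the difference tensor $A:=\boldsymbol{\nabla}^{cs}-\boldsymbol{\nabla}^{V}$, a symmetric $(1,2)$-tensor which in the $u$-coordinates is $A\bigl(\frac{\partial}{\partial u^i},\frac{\partial}{\partial u^j}\bigr)=\sum_k\tilde{S}_{ij}^k\frac{\partial}{\partial u^k}$, i.e. the Christoffel tensor of the Hessian metric $h$ (see \eqref{def-h}, \eqref{christoffel-u}). Substituting $\boldsymbol{\nabla}^{cs}=\boldsymbol{\nabla}^{V}+A$ into the left-hand side of \eqref{ass4-S} with $\diamond$ replaced by $\star$, expanding the covariant derivatives, and using that $(\boldsymbol{\nabla}^{V},\star,e)$ satisfies \eqref{star-determined}, the $A$-free part vanishes and one is left with a purely tensorial identity, schematically of the form
\[
(\boldsymbol{\nabla}^{V}_X A)(Y,e)+A\bigl(X,\boldsymbol{\nabla}^{V}_Y e\bigr)+A\bigl(Y,\boldsymbol{\nabla}^{V}_X e\bigr)+A\bigl(X,A(Y,e)\bigr)-\boldsymbol{\nabla}^{V}_{A(X,Y)}e-A\bigl(A(X,Y),e\bigr)+A(X\star Y,e)=0,
\]
to be verified for all $X,Y\in\mathcal{T}_{M^{\circ}}$. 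Proving this identity finishes the argument.

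To verify it, I would pass to the flat coordinates $t=(t^1,\dots,t^n)$ of the natural Saito structure supplied by Theorem \ref{duality-polynomial}, chosen to be a set of basic invariants. Because the degrees are strictly decreasing \eqref{degrees}, the invariant $x^n$ equals $t^n$ up to a nonzero scalar, so $h$ and $A$ have explicit expressions in the $t$-coordinates; the structure constants of $\star$ (equivalently of the dual $\ast$) are polynomial in $t$, and $\boldsymbol{\nabla}^{V}$ is itself expressed through them via \eqref{duality-dual2}. The remaining ingredients are: the flatness of $\boldsymbol{\nabla}^{cs}$ and the Codazzi-type relation for the Hessian metric underlying \eqref{flatness-u} (Lemma \ref{cs-flat}(1)); the homogeneity $\boldsymbol{\nabla}^{cs}_X E=\frac{d_n}{2d_1}X$ of Lemma \ref{cs-flat}(2), which pins down the weight of $A$ under the Euler field; and, decisively, the degree identity (CS3), $d_\alpha+d_{n+1-\alpha}=d_1+d_n$, with its consequence \eqref{cs-degrees}. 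Every tensor in the identity above is homogeneous for the grading defined by $E$, and \eqref{cs-degrees} limits which $t$-components of $A$, of $\boldsymbol{\nabla}^{V}_{\cdot}\,e$ and of the structure constants of $\star$ can be nonzero; this is what should make the seven terms collapse.

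The step I expect to be the main obstacle is exactly this last one: carrying out the expansion in the $t$-coordinates and organizing the weight count so that (CS3) forces all residual terms to cancel. The delicate point is to run the homogeneity argument uniformly over all finite Coxeter groups and all Shephard groups, using only the characterization by degrees and not the explicit forms of the lowest invariant $x^n$.
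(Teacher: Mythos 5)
Your overall strategy coincides with the paper's: since $X\mapsto\boldsymbol{\nabla}^{cs}_X e$ is invertible, it suffices to check that $\star$ itself satisfies the relation \eqref{ass4-S} defining $\diamond$. But the step you defer --- verifying the seven-term identity --- is exactly where the content of the theorem lies, and the mechanism you propose for it (a weight count based on (CS3) and \eqref{cs-degrees}) cannot close the argument on its own. Homogeneity and \eqref{cs-degrees} only tell you which components of the various tensors are forced to vanish or to be constant; they say nothing about why the surviving, generically nonzero components of the seven terms cancel against one another. In the paper, (CS3) enters only indirectly, through the triangularity and degree statements of Lemma \ref{lemma:A-1}, and these are used to prove the invertibility of $X\mapsto\boldsymbol{\nabla}^{cs}_X e$ (i.e.\ $\det S_1\neq 0$), not to produce the cancellation.

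The missing ingredient is the exact identity \eqref{H-B}: in the $\nabla$-flat coordinates $t$ one has $H_{\alpha\beta}=-\Omega^n_{\alpha\beta}=\frac{d_n-1}{d_1}B^n_{\alpha\beta}$, i.e.\ the Hessian metric $h$ is, up to the constant $\frac{d_n-1}{d_1}$, the $\gamma=n$ component of the connection form of $\boldsymbol{\nabla}^V$ and hence of the structure constants of $\star$. This is the structural fact that ties the Levi--Civita connection of $h$ to the natural ASS. Feeding it into the Christoffel symbols yields the closed formula of Lemma \ref{lemma-S}, $2S_\alpha=A^{-1}\partial_\alpha A+(-I-W+A^{-1}WA)B_\alpha$ with $A=HU$ (note the paper reserves $A$ for this matrix, not for your difference tensor); specializing to $\alpha=1$, where $\partial_1A=O$ and $B_1=U^{-1}$, gives $2S_1=(-I-W+A^{-1}WA)U^{-1}$, and combining with $\partial_1B_\alpha=-B_\alpha U^{-1}$ from \eqref{dB2} one obtains $B^{cs}_\alpha=-S_1^{-1}\partial_1S_\alpha=B_\alpha$ by an exact cancellation of the common factor $(-I-W+A^{-1}WA)U^{-1}$. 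Without identifying \eqref{H-B}, or an equivalent relation between $\mathrm{Hess}(t^n)$ and $\star$, your plan stalls at precisely the point you flag as the main obstacle.
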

The proof of Theorem \ref{theorem:multiplication} will be given in \S \ref{proof1}.
\begin{corollary}
The two ASS's $(\boldsymbol{\nabla}^V,\star,e)$ and 
 $(\boldsymbol{\nabla}^{cs},\diamond,e)$ 
for $G$
agree if and only if $d_n=2$, i.e. 
if and only if $G$ is a finite Coxeter group.
\end{corollary}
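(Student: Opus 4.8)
The plan is to reduce the claimed equivalence to the three pieces of data that distinguish two ASS's — the parameter, the connection, and the multiplication — and to note that for two of these the equivalence with $d_n=2$ is immediate, while the third is exactly Theorem \ref{theorem:multiplication}, which we may assume.

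First I would record that an ASS on $M^{\circ}$ is completely determined by the connection, the multiplication, the vector field $e$, and the parameter $r$, the unit $E$ being determined by the multiplication. For the two structures in question the vector field $e=\partial/\partial x^1$ is literally the same one (see \eqref{unit-e}), and the unit of both multiplications is the same Euler field $E$ by Theorem \ref{thm:KMS}(2) and Theorem \ref{theorem1}(2). Hence $(\boldsymbol{\nabla}^V,\star,e)$ and $(\boldsymbol{\nabla}^{cs},\diamond,e)$ coincide if and only if (a) the parameters $\tfrac1{d_1}$ and $\tfrac{d_n}{2d_1}$ agree, (b) $\boldsymbol{\nabla}^V=\boldsymbol{\nabla}^{cs}$, and (c) $\star=\diamond$. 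Condition (c) holds for every finite Coxeter group and every Shephard group by Theorem \ref{theorem:multiplication}, so it is automatic; condition (a) obviously holds if and only if $d_n=2$.

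For condition (b) I would argue from \eqref{christoffel-u}: since $\boldsymbol{\nabla}^V$ has vanishing Christoffel symbols in the $u$-coordinates by \eqref{trivial-connection-u}, the equality $\boldsymbol{\nabla}^V=\boldsymbol{\nabla}^{cs}$ amounts to $\tilde{S}_{ij}^k=0$ for all $i,j,k$, and since $\tilde{H}=(\tilde{H}_{ij})$ is invertible — this is where (CS2), which makes $h$ a metric, enters — this is in turn equivalent to $\partial^3 x^n/\partial u^i\partial u^j\partial u^l=0$ for all $i,j,l$, i.e. to $x^n$ being homogeneous of degree at most $2$. As $G$ is irreducible, $d_n=\deg x^n\ge 2$, so (b) holds precisely when $d_n=2$. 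Combining (a), (b) and (c), the two ASS's agree if and only if $d_n=2$; and among the duality groups — in particular among finite Coxeter and Shephard groups — those with $d_n=2$ are exactly the finite Coxeter groups (see the footnote near the beginning of \S\ref{section:CS-groups}). The only genuine mathematical content is Theorem \ref{theorem:multiplication} (proved separately in \S\ref{proof1}); the rest is bookkeeping, so the sole point deserving a word is the reverse direction of (b), where the nondegeneracy of $h$ must be invoked to pass from $\tilde{S}_{ij}^k=0$ back to the vanishing of the third derivatives of $x^n$.
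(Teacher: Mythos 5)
Your proof is correct and follows essentially the same route as the paper: the text surrounding Theorem \ref{theorem:multiplication} makes exactly this three-way comparison of parameters, connections and multiplications, disposing of the connection part by noting that $\boldsymbol{\nabla}^{cs}=\boldsymbol{\nabla}^V$ forces $(\tilde{H}_{ij})$ to be constant, hence $d_n=2$. Your variant for that step --- passing through the vanishing of the Christoffel symbols $\tilde{S}_{ij}^k$ and, via the invertibility of $\tilde{H}$, of the third derivatives of $x^n$ --- is an equivalent one-line argument.
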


Similarly, we may ask whether
the two dual Saito structures  $(\nabla,\ast,E)$
and $(\nabla^{cs},\circledast, E)$
are the same or not. 
Theorem \ref{theorem:multiplication} and the first equations of \eqref{duality-dual}, \eqref{CS-dual} imply the following
\begin{corollary}\label{cor}
The multiplication $\circledast$ and the multiplication $\ast$
are the same.
\end{corollary}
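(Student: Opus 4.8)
The plan is to deduce Corollary \ref{cor} directly from Theorem \ref{theorem:multiplication} by tracing through the definitions of the two dual multiplications. Recall that $\ast$ is defined from $\star$ by the duality relation \eqref{duality-dual}, namely $e\star(X\ast Y)=X\star Y$, while $\circledast$ is defined from $\diamond$ by the analogous relation \eqref{CS-dual}, namely $e\diamond(X\circledast Y)=X\diamond Y$. Both constructions use the same distinguished vector field $e$ (the one fixed in \eqref{unit-e}), and in each case the operator $e\star{}$, respectively $e\diamond{}$, is invertible on $T_pM^{\circ}$: for $\star$ this is part of Theorem \ref{thm:KMS}, and for $\diamond$ it follows from Theorem \ref{theorem1}(1) together with the proof that the dual structure is well-defined on $M^{\circ}$.

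First I would invoke Theorem \ref{theorem:multiplication} to replace $\diamond$ by $\star$ throughout the defining relation for $\circledast$. Since $\star$ and $\diamond$ coincide as multiplications on $TM^{\circ}$, the relation $e\diamond(X\circledast Y)=X\diamond Y$ becomes $e\star(X\circledast Y)=X\star Y$. But this is exactly the relation \eqref{duality-dual} that characterizes $\ast$, and by the invertibility of the operator $e\star{}=\mathcal{P}_p$ on each tangent space $T_pM^{\circ}$ (with $p\in M^{\circ}=N_0$ in the notation of \S \ref{review}), the multiplication satisfying this relation is unique. Hence $X\circledast Y=X\ast Y$ for all $X,Y\in\mathcal{T}_{M^{\circ}}$, i.e.\ the two multiplications agree on $M^{\circ}$.

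Finally, I would note that both $\ast$ and $\circledast$ extend to the whole orbit space $M$: this is the content of Theorem \ref{duality-polynomial} for $\ast$ and of Theorem \ref{cs-polynomial} for $\circledast$. Since $M^{\circ}$ is open and dense in $M$ and the structure constants on either side are polynomial (equivalently, holomorphic) in suitable coordinates, the equality $\ast=\circledast$ on $M^{\circ}$ propagates to all of $M$. This completes the argument.

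There is essentially no obstacle here: the corollary is a formal consequence of Theorem \ref{theorem:multiplication}, whose proof is the substantive step and is deferred to \S \ref{proof1}. The only points requiring a word of care are the uniqueness statement (which one gets for free from the invertibility of $e\star{}$, already established) and the extension from $M^{\circ}$ to $M$ (which is handled by the two polynomiality theorems). Both are routine once Theorem \ref{theorem:multiplication} is in hand.
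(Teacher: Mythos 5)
Your argument is correct and is essentially the paper's own: the corollary follows immediately from Theorem \ref{theorem:multiplication} together with the first equations of \eqref{duality-dual} and \eqref{CS-dual}, the invertibility of $e\star$ on $T_pM^{\circ}$ giving the uniqueness needed to identify $\circledast$ with $\ast$. The closing remark on extending the identity from $M^{\circ}$ to $M$ is a harmless addition consistent with Theorems \ref{duality-polynomial} and \ref{cs-polynomial}.
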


As for the connections,
the second equations of \eqref{duality-dual}, \eqref{duality-dual2}, \eqref{CS-dual}, \eqref{CS-dual2} imply
that 
$\nabla^{cs}=\nabla$ holds if and only if 
\begin{equation}\nonumber 
\boldsymbol{\nabla}^{cs}_XY=\boldsymbol{\nabla}^V_XY+\frac{d_n-2}{2d_1}X\star
Y~\quad (X,Y\in \mathcal{T}_M)~.
\end{equation}
In the $u$-coordinates, 
the above relation 
can be expressed as 
\begin{equation}\label{difference}
\tilde{S}_{ij}^k=\frac{d_n-2}{2d_1} \tilde{B}_{ij}^k~.
\end{equation}
Using the classification of the Shepard groups, we obtain the following  
\begin{theorem} \label{theorem2}
\begin{enumerate}
\item
$\nabla=\nabla^{cs}$ holds if and only if $G$ is a finite Coxeter group
or one of the following groups: 
$$
G_3\cong \mathbb{Z}_m, \quad G_4,\quad 
G_5,\quad G_8,\quad G_{16},\quad G_{20},\quad
G_{25},\quad G_{32}~.
$$
\item
The Saito structures $(\nabla,\ast,E)$ and
$(\nabla^{cs},\circledast, E)$
for $G$ agree 
if and only if $G$ is a finite Coxeter group
or one of the above groups.
\end{enumerate}
\end{theorem}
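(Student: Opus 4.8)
The plan is to reduce Theorem~\ref{theorem2} to a concrete verification in the $u$-coordinates using the identity \eqref{difference}, namely $\tilde{S}_{ij}^k=\tfrac{d_n-2}{2d_1}\tilde{B}_{ij}^k$. By Corollary~\ref{cor} (or rather its precursor, Theorem~\ref{theorem:multiplication}), the two multiplications already coincide, so the only thing distinguishing the two Saito structures $(\nabla,\ast,E)$ and $(\nabla^{cs},\circledast,E)$ is the pair of connections, and by the remarks preceding \eqref{difference} these agree precisely when \eqref{difference} holds. Thus part~(2) follows immediately from part~(1) together with Corollary~\ref{cor}, and the whole content is in establishing part~(1).

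For part~(1), first I would dispose of the ``if'' direction. When $G$ is a finite Coxeter group, $d_n=2$, so both sides of \eqref{difference} vanish ($\tilde{S}_{ij}^k=0$ by the observation following \eqref{christoffel-u}, and the coefficient $\tfrac{d_n-2}{2d_1}$ is $0$), hence $\nabla=\nabla^{cs}$. For the listed Shephard groups $G_3\cong\mathbb{Z}_m$, $G_4$, $G_5$, $G_8$, $G_{16}$, $G_{20}$, $G_{25}$, $G_{32}$, I would verify \eqref{difference} directly: for $G_3$ this is immediate from Examples~\ref{example-zm-1} and \ref{example-zm-2}, where $\tilde{B}_{11}^1=m/u^1$, $\tilde{S}_{11}^1=(m-2)/(2u^1)$, and $\tfrac{d_n-2}{2d_1}=\tfrac{m-2}{2m}$; for the remaining exceptional groups one takes a set of basic invariants from \cite[Chapter~6]{LehrerTaylor} or \cite[\S B.3]{OrlikTerao}, computes $e=\partial/\partial x^1$ and hence $e^k$, solves \eqref{B-u} for $\tilde{B}_{ij}^k$, computes $\tilde{H}_{ij}=\partial^2 x^n/\partial u^i\partial u^j$ and $\tilde{S}_{ij}^k$ via \eqref{christoffel-u}, and checks the proportionality \eqref{difference}. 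This is a finite symbolic computation, done (as in the proof of Lemma~\ref{cs-flat}) with Mathematica.

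For the ``only if'' direction I would use the homogeneity structure to turn \eqref{difference} into a statement about a single pair of degrees, and then consult the classification. The point is that both $\tilde{S}_{ij}^k$ and $\tilde{B}_{ij}^k$ are, with respect to the $u$-coordinates, homogeneous of a definite degree (recall $\deg u^i=1$), so the leading behaviour of each structure constant is governed by the degrees of the basic invariants. I expect that evaluating \eqref{difference} along a generic point of a single reflection hyperplane, or extracting the most singular term, forces an algebraic relation among $d_1,\dots,d_n$; combined with (CS3) and \eqref{cs-degrees} this relation is very restrictive. Alternatively, and perhaps more cleanly, one can argue that $\nabla=\nabla^{cs}$ would make $(\nabla,\ast,E)$ compatible with the Hessian metric $h$ up to the overall almost-duality rescaling, which in turn would be detected in low codimension; running this through the list $G(m,1,n)$ with $n\geq 3$, and the exceptional Shephard groups not on the list, one exhibits for each a triple $(i,j,k)$ where \eqref{difference} fails. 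The main obstacle is precisely this last step: ruling out \emph{all} the remaining Shephard groups. For the infinite family $G(m,1,n)$ one can do it uniformly using Examples~\ref{example-gm1n-1} and \ref{example-gm1n-2}: there $\tilde{S}_{ij}^k$ is diagonal ($\tilde{S}_{ii}^i=(m-2)/(2u^i)$, all other components zero), whereas $\tilde{B}_{ij}^k$ has genuinely off-diagonal components such as $\tilde{B}_{ij}^i=-m(u^j)^{m-1}/((u^i)^m-(u^j)^m)$ for $i\neq j$, so \eqref{difference} fails as soon as $n\geq 2$ and $m\geq 3$ unless the coefficient $\tfrac{d_n-2}{2d_1}$ happens to kill it — and since $d_1=mn$, $d_n=m$ here, the coefficient is nonzero, a contradiction; but wait, $G(m,1,2)$ with small $m$ must be checked against the list, and indeed it is not listed, consistent with this. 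For the finitely many exceptional groups not on the list, the same Mathematica computation that verified the listed cases will exhibit a failing component, so the ``only if'' direction reduces to a finite check. Hence the real work is organizational: isolating the $G(m,1,n)$ family for a uniform argument and handing the rest to an explicit case-by-case symbolic verification.
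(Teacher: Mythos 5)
Your proposal is correct and follows essentially the same route as the paper: reduce both parts to the componentwise identity \eqref{difference}, settle the Coxeter case and $G_3\cong\mathbb{Z}_m$ by hand, rule out the infinite family $G(m,1,n)$ by comparing Examples \ref{example-gm1n-1} and \ref{example-gm1n-2} (where $\tilde S$ is diagonal but $\tilde B$ is not), and dispatch the finitely many exceptional Shephard groups by symbolic computation. The speculative degree-theoretic shortcuts you mention are not needed and are not what the paper does either.
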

\begin{proof}
If $G$ is a finite Coxeter group, \eqref{difference} is true since 
$\tilde{S}_{ij}^k=0$ and $d_n=2$.
For $G=\mathbb{Z}_m$ ($m\geq 2$), 
we can see \eqref{difference} holds
by comparing Examples \ref{example-zm-1} and \ref{example-zm-2}.
For 
$G=G(m,1,n)$ ($m\geq 3$, $n\geq 2$),
we see 
\eqref{difference} does not hold by 
comparing Examples \ref{example-gm1n-1} and 
\ref{example-gm1n-2}.
For the remaining exceptional Shephard groups, 
we checked 
whether \eqref{difference} holds or not using Mathematica\footnote
{Given a set of basic invariants, it is not difficult to
compute $\tilde{S}_{ij}^k$ and $\tilde{B}_{ij}^k$
using Mathematica. See \eqref{B-u} and \eqref{christoffel-u}. 
Formulas for basic invariants can be found,
e.g., in \cite{LehrerTaylor, OrlikTerao}. For $G_{32}$, see \cite{Maschke}.}
and obtained the result.
\end{proof}

Studying the condition $\nabla=\nabla^{cs}$
in the $x$-coordinates, we are led to the following
\begin{theorem}\label{theorem3}
The natural SS $(\nabla,\ast,E)$ 
admits a compatible Frobenius structure 
if and only if $\nabla=\nabla^{cs}$. 
Moreover, a compatible metric
is a constant multiple of $\eta$
and the charge is $D=1-\frac{d_n}{d_1}~.$
\end{theorem}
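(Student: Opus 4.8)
The plan is to work entirely in the $x$-coordinates, where (by Theorem \ref{duality-polynomial}) the natural SS has $\nabla$-flat coordinates $t=(t^1,\dots,t^n)$ that form a set of basic invariants, so $\nabla$ is the trivial connection in $t$ and the structure constants of $\ast$ are polynomial in $t$. The ``only if'' direction is the substantive one: suppose the natural SS admits a compatible metric $\eta'$, i.e. a nondegenerate symmetric bilinear form with $\nabla\eta'=0$, the Frobenius associativity \eqref{f2}, and the homogeneity \eqref{f3}. First I would use $\nabla\eta'=0$ together with flatness of $\nabla$ to conclude that $\eta'$ has constant entries $(\eta'_{\alpha\beta})$ in the $t$-coordinates. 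Next I would exploit the homogeneity condition \eqref{f3} relative to the Euler field $E=\tfrac{1}{d_1}E_{\mathrm{deg}}=\tfrac{1}{d_1}\sum d_\alpha t^\alpha\partial_{t^\alpha}$: since $E_{\mathrm{deg}}$ acts on the constant $\eta'_{\alpha\beta}$ as zero while the left side of \eqref{f3} picks up the weight $-(d_\alpha+d_\beta)/d_1 + 2/d_1$ from the two Lie brackets and the $E$-derivative, one gets $\eta'_{\alpha\beta}=0$ unless $d_\alpha+d_\beta$ equals a fixed value; comparing with the unit $e=\partial_{t^1}$ and \eqref{f2}, that value must be $d_1+d_n$, and $2-D=\tfrac{d_\alpha+d_\beta}{d_1}=\tfrac{d_1+d_n}{d_1}$, forcing $D=1-\tfrac{d_n}{d_1}$. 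Here is exactly where the degree inequalities \eqref{degrees} and their consequence \eqref{cs-degrees} enter: they guarantee $d_\alpha+d_\beta=d_1+d_n$ forces $\beta=n+1-\alpha$, so $\eta'$ is anti-diagonal in $t$, hence (up to the scalar $\eta'_{1n}$, which is nonzero by nondegeneracy) uniquely determined.

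Having pinned down $\eta'$ up to a constant, I would then show it must coincide (up to that constant) with $\eta$ of \eqref{CS-dual-2}, and deduce $\nabla=\nabla^{cs}$. The cleanest route: \eqref{f1} says $\nabla$ is the Levi--Civita connection of $\eta'$; but the CS Frobenius structure $(\eta,\circledast,E)$ of \S\ref{section:CS-groups} has $\nabla^{cs}$ as its Levi--Civita connection and the same charge $D=1-\tfrac{d_n}{d_1}$, and by Corollary \ref{cor} the two multiplications $\ast$ and $\circledast$ already agree. So both $\eta'$ and $\eta$ are Frobenius metrics for the \emph{same} multiplication $\ast=\circledast$, the same unit $e$, and the same Euler field $E$; the weight argument above, applied to $\eta$ as well, shows any such metric is anti-diagonal in $t$ with a single free scalar, hence $\eta' = c\,\eta$ for some $c\in\mathbb{C}^\times$. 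Since Levi--Civita connections are unchanged under constant rescaling of the metric, $\nabla = \nabla^{cs}$. The ``if'' direction is immediate: if $\nabla=\nabla^{cs}$ then, using $\ast=\circledast$ (Corollary \ref{cor}), the quadruple $(\eta,\ast,E)=(\eta,\circledast,E)$ is already exhibited as a Frobenius structure compatible with $(\nabla,\ast,E)$, with charge $1-\tfrac{d_n}{d_1}$.

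The main obstacle I anticipate is the bookkeeping in the ``only if'' direction: one must be careful that \eqref{f3} is being read with the \emph{correct} Euler field and that the weight count through the two Lie brackets plus the derivation really isolates $d_\alpha+d_\beta$, and one must invoke \eqref{f2} (Frobenius compatibility of $\eta'$ with $\ast$, whose structure constants are the polynomial $\tilde B$-type constants in $t$) to rule out the degenerate possibility that $\eta'$ vanishes on too large a subspace — that is, to see that the pairing with the unit $e=\partial_{t^1}$ is nondegenerate and hence forces the anti-diagonal pattern to be supported on \emph{all} pairs $(\alpha,n+1-\alpha)$ rather than a proper subset. A secondary point to handle carefully is that this argument does not use the classification: it rests only on the ``degrees'' characterization (CS3) and \eqref{degrees}, exactly as the statement promises, so I would make sure no step secretly appeals to Theorem \ref{theorem2}.
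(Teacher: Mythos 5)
Your first two steps match the paper exactly: $\nabla$-flatness of the metric forces constant entries $\Theta_{\alpha\beta}$ in the $t$-coordinates, and the homogeneity condition \eqref{f3} together with \eqref{cs-degrees} and nondegeneracy forces $2-D=(d_1+d_n)/d_1$ and $\Theta$ anti-diagonal. But the step from there to ``$\eta'=c\,\eta$, hence $\nabla=\nabla^{cs}$'' has a genuine gap. First, ``anti-diagonal symmetric and constant'' is a $\lceil n/2\rceil$-parameter family, not a one-parameter family, so the weight argument alone does not pin $\eta'$ down to a single scalar. Second, and more seriously, you cannot apply that weight argument to $\eta$ itself: $\eta$ is flat for $\nabla^{cs}$, not for $\nabla$, so it is constant in the $s$-coordinates of Theorem \ref{cs-polynomial}, not in $t$. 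In the $t$-coordinates its representation matrix is $A$ (Lemma \ref{lemma:A-3}), which by Lemma \ref{lemma:A-1} is only anti-triangular in general; it is anti-diagonal precisely when $\nabla=\nabla^{cs}$ (Lemma \ref{lemma:A-3-2}). So asserting that $\eta$ is anti-diagonal in $t$ assumes the conclusion.

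The missing ingredient is the Frobenius compatibility \eqref{f2}, used not merely for nondegeneracy (as in your ``obstacles'' paragraph) but to tie $\Theta$ to the multiplication: $\theta(\partial_\alpha,\partial_\beta)=\theta(\partial_\alpha\ast\partial_\beta,\partial_1)$ gives
$\Theta_{\alpha\beta}=\sum_\gamma C^\gamma_{\alpha\beta}\Theta_{\gamma 1}=C^n_{\alpha\beta}\Theta_{n1}$
once $\Theta$ is known to be anti-diagonal, and by Lemma \ref{lemma:A-0} this says $\Theta=\frac{d_1\Theta_{n1}}{d_n-1}A$. Now anti-diagonality of $\Theta$ forces $A$ to be anti-diagonal and constant, which by Lemma \ref{lemma:A-3-2} (i.e.\ $\Upsilon_\alpha=\tfrac12 A^{-1}\partial_\alpha A=O$) is exactly $\nabla=\nabla^{cs}$, and simultaneously yields $\theta=c\,\eta$. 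This is the route the paper takes; your ``if'' direction and the determination of the charge are fine.
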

The proof will be given in \S \ref{proof-theorem3}.

\section{The matrix representation with respect to
$\nabla$-flat coordinates} 
\label{matrix-representation}

Let $G$ be a finite Coxeter group or a Shephard group of rank $n$
and let $x^1,\ldots,x^n$ be a set of basic invariants for $G$. 
We take $e$ and $h$ as in \eqref{unit-e} and  \eqref{def-h}.
Denote by
$(\nabla, \ast, E)$  the natural Saito structure for $G$ 
with the unit $e$.
For the sake of convenience, we take
a system of $\nabla$-flat coordinates
$t=(t^1,\ldots, t^n)$
satisfying (i) and (ii) in Theorem \ref{duality-polynomial}. 
Here we choose the normalization
$t^1=x^1+\mathbb{C}[x^2,\ldots, x^n]$,
$t^n=x^n$ 
so that 
$e$ and $h$ are unchanged, i.e.
$$ 
e=\frac{\partial}{\partial t^1}~,\quad 
h(\cdot,\cdot)=\langle \mathrm{Hess}(t^n)(\cdot),\cdot \rangle~.
$$ 
Notice that $E_{\mathrm{deg}}$ and $E$
defined in \eqref{euler-vf}
is also written as 
\begin{equation}\label{euler-vf2}
E_{\mathrm{deg}}=\sum_{\alpha=1}^n d_{\alpha}t^{\alpha}\frac{\partial}{\partial t^{\alpha}}~,\quad 
E=\frac{1}{d_1}E_{\mathrm{deg}}~.
\end{equation}
Below we write $\mathbb{C}[t^1,t^2,\ldots t^n]=\mathbb{C}[t]$,
$\mathbb{C}[t']=\mathbb{C}[t^2,\ldots, t^n]$ and 
$$
\partial_{\alpha}=\frac{\partial}{\partial t^{\alpha}}\quad 
(1\leq \alpha\leq n). 
$$
The $\nabla$-flatness is expressed as 
$\nabla (\partial_{\alpha})=0$ ($1\leq \alpha\leq n$). 

\subsection{Matrix representations}
In this subsection, we write
down the conditions for the natural SS $(\nabla,\ast,E)$
and the natural ASS  $(\boldsymbol{\nabla}^V,\star, e)$ 
in the the matrix form 
with respect to the $\nabla$-flat coordinates $t$.
(See also \cite[\S 5]{KMS2018}).

For two $n\times n$ matrices $A$ and $B$,
$[A,B]:=AB-BA$.  The identity matrix 
and the zero matrix are
 denoted $I$ and $O$.
 $M_n(\mathbb{C}[t])$ denote the 
 space of $n\times n$ matrices whose entries are 
 polynomials in $t$.

First, let us consider the conditions for the natural SS
$(\nabla,\ast,E)$ for $G$.
Denote by $C_{\alpha\beta}^{\gamma}$ ($1\leq \alpha,\beta,\gamma\leq n$) 
the structure constants of the multiplication $\ast$:
$$
\partial_{\alpha}\ast\partial_{\beta}=\sum_{\gamma=1}^n
C_{\alpha\beta}^{\gamma}\partial_{\gamma}~,
\quad{ (C_{\alpha})^{\gamma}}_{\beta}:=C_{\alpha\beta}^{\gamma}~.
$$
The matrix $C_{\alpha}$ is the matrix representation of 
$\partial_{\alpha}\ast$ with respect to the basis 
$(\partial_1,\ldots,\partial_{n})$.
Since the multiplication $\ast$ is commutative, associative, and has $e=\partial_1$ as the unit,
we have
\begin{equation}\label{c1}
C_{\alpha\beta}^{\gamma}=C_{\beta\alpha}^{\gamma}~,
\quad [C_{\alpha},C_{\beta}]=O~,\quad 
C_1=I~.
\end{equation}
Denote by $U$
the representation matrix of 
$E\ast$ with respect to  the basis
$\partial_1,\ldots,\partial_n$:
\begin{equation}\label{U-t}
E\ast\partial_{\beta}=\sum_{\gamma=1}^n {U^{\gamma}}_{\beta}\partial_{\gamma}~,\quad 
U=\sum_{\alpha=1}^n \frac{d_{\alpha}}{d_1}t^{\alpha}C_{\alpha}~.
\end{equation}
Notice that the second equation of \eqref{c1} implies 
\begin{equation} \label{uc}
[U,C_{\alpha}]=O \quad (1\leq \alpha\leq n)~.
\end{equation}
Given that $\nabla (\partial_{\alpha})=0$,
the conditions ({\bf SS1}), ({\bf SS2}) are written as follows.
\begin{equation}\label{dU}
\partial_{\alpha}C_{\beta}=\partial_{\beta}C_{\alpha}~,\quad 
\partial_{\alpha}U=WC_{\alpha}-C_{\alpha}W+C_{\alpha}
\quad (1\leq \alpha,\beta\leq n)~,
\end{equation}
where
$$
W=\frac{1}{d_1}\mathrm{diag}(d_1,\ldots,d_n)~.
$$
It is clear that $e=\partial_1$ and $E$ satisfy
the conditions ({\bf SS3}), ({\bf SS4}).

Recall that  the natural SS $(\nabla,\ast, E)$ is polynomial, i.e.,
$C_{\alpha\beta}^{\gamma}\in \mathbb{C}[t]$.  
By the first equation of \eqref{dU} and  by $C_1=I$, we have
$\partial_1 C_{\alpha}=\partial_{\alpha} C_1=O$. 
Therefore 
\begin{equation}\label{indep-t1}
C_{\alpha}\in M_n(\mathbb{C}[t'])~,
\quad 
U- t^1  I \in M_n (\mathbb{C}[t'])~.
\end{equation}
So $\det U\in \mathbb{C}[t]$ is a monic of 
degree $n$ as a polynomial in $t^1$.
In fact, $\det U$ agrees  with the discriminant 
polynomial 
of $G$
and $$M\setminus M^{\circ}=\{t\in M \mid \det U=0 \}~.$$

Applying the derivation $E_{\mathrm{deg}}$ to 
$C_{\alpha\beta}^{\gamma}$, we obtain
\begin{equation}\nonumber
\begin{split}
(\mathrm{deg}\,C_{\alpha\beta}^{\gamma})C_{\alpha\beta}^{\gamma}
&\stackrel{\eqref{degree-operator}}{=}
E_{\mathrm{deg}}C_{\alpha\beta}^{\gamma}
=\sum_{\mu=1}^n d_{\mu}t^{\mu }\partial_{\mu}C_{\alpha\beta}^{\gamma}
\stackrel{\eqref{dU}}{=}
\sum_{\mu=1}^n d_{\mu}t^{\mu }\partial_{\alpha}C_{\mu\beta}^{\gamma}
\\
&=d_1 \partial_{\alpha}U-d_{\alpha}C_{\alpha\beta}^{\gamma}
\stackrel{\eqref{dU}}{=}(d_1+d_{\gamma}-d_{\alpha}-d_{\beta})C_{\alpha\beta}^{\gamma}.
\end{split}
\end{equation}
Therefore 
\begin{equation}\label{degree-C}
\mathrm{deg}\,C_{\alpha\beta}^{\gamma}=d_1+d_{\gamma}-d_{\alpha}-d_{\beta}~,
\quad
\mathrm{deg}\, {U^{\gamma}}_{\beta}=d_1+d_{\gamma}-d_{\beta}~.
\end{equation}

Next let us consider the ASS
$(\boldsymbol{\nabla}^V,\star,e)$ with parameter $r=\frac{1}{d_1}$ which is dual to $(\nabla,\ast E)$.
Denote by 
$B_{\alpha\beta}^{\gamma}$ ($1\leq \alpha,\beta,\gamma\leq 1$) the structure constants of the multiplication $\star$:
$$
\partial_{\alpha}\star\partial_{\beta}=\sum_{\gamma=1}^n
{B_{\alpha\beta}^{\gamma}}\partial_{\gamma}~,
\quad {(B_{\alpha})^{\gamma}}_{\beta}=B_{\alpha\beta}^{\gamma}~.
$$
Substituting $X=\partial_{\alpha}$ and $Y=\partial_{\beta}$
into the first equation of \eqref{duality-dual2}, 
we have
$$
\sum_{\delta=1}^n {U^{\gamma}}_{\delta}B_{\alpha\beta}^{\delta}
=C_{\alpha\beta}^{\gamma}~,$$
or $UB_{\alpha}=C_{\alpha}$. 
Thus, 
\begin{equation}\label{B-C}
B_{\alpha}=U^{-1}C_{\alpha}\stackrel{\eqref{uc}}{=}
C_{\alpha}U^{-1}~.
\end{equation}
(Therefore the entries of $B_{\alpha}$ are 
homogeneous rational functions in $t$ with the 
denominator $\det U$. $B_{\alpha}$ is only defined on $M^{\circ}$.)

Eqs. \eqref{c1} and \eqref{uc} imply
\begin{equation}\label{id-B}
B_{\alpha\beta}^{\gamma}
=B_{\beta\alpha}^{\gamma}~,\quad 
[B_{\alpha},C_{\beta}]=O~,\quad
[B_{\alpha},B_{\beta}]=O~,\quad 
[B_{\alpha},U]=O~.
\end{equation}
Eq. \eqref{dU} together with \eqref{id-B} implies that 
\begin{equation}\label{dB}
\begin{split}
\partial_{\alpha}B_{\beta}
&=
\partial_{\alpha}(C_{\beta}U^{-1})
=(\partial_{\alpha}C_{\beta}) U^{-1}-C_{\beta}U^{-1}
(\partial_{\alpha}U)U^{-1}
\\&=
(\partial_{\alpha}C_{\beta})U^{-1}
-B_{\beta}WB_{\alpha}+U^{-1}C_{\beta}C_{\alpha}W U^{-1}-B_{\beta}B_{\alpha}~.
\end{split}
\end{equation}
Especially, if $\alpha=1$, $C_1=I$ and 
$\partial_1 C_{\beta}=\partial_{\beta} I=O$. So we have
\begin{equation}\label{dB2}
\partial_1 B_{\beta}=-B_{\beta}U^{-1}~.
\end{equation}

Now let $\Omega_{\alpha}$ be 
the connection matrix of $\boldsymbol{\nabla}^V$:
$$
\boldsymbol{\nabla}^V_{\partial_{\alpha}}(\partial_{\beta})=
\sum_{\gamma=1}^n \Omega_{\alpha\beta}^{\gamma}\partial_{\gamma}~,
\quad{(\Omega_{\alpha})^{\gamma}}_{\beta}=\Omega_{\alpha\beta}^{\gamma}~.
$$
Substituting 
$X=\partial_{\alpha}$, $Y=\partial_{\beta}$
into the second equation of 
\eqref{duality-dual2}, 
we obtain
\begin{equation}\label{omega-mr}
\Omega_{\alpha\beta}^{\gamma}=
\frac{1-d_{\gamma}}{d_1} B_{\alpha\beta}^{\gamma}~.
\end{equation}
Moreover, \eqref{trivial-connection-u} implies that 
\begin{equation}\label{trivial-connection}
\Omega_{\alpha\beta}^{\gamma}=-
\sum_{i,j=1}^n 
\frac{\partial u^i}{\partial t^{\alpha}}
\frac{\partial u^j}{\partial t^{\beta}}
\frac{\partial^2 t^{\gamma}}{\partial u^i\partial u^j}
~.
\end{equation}

\subsection{Representation matrix of the metric $h$}
\label{rep-h}
In the $t$-coordinates,   the metric $h$ 
is given by 
\begin{equation}\nonumber 
H_{\alpha\beta}:=
h (\partial_{\alpha},\partial_{\beta})
\stackrel{\eqref{hessian-metric-u}}{=}
\sum_{i,j=1}^n
\frac{\partial u^i}{\partial t^{\alpha}}
\frac{\partial u^j}{\partial t^{\beta}}
\frac{\partial^2 t^n}{\partial u^i\partial u^j}~,
\quad H=(H_{\alpha\beta})~.
\end{equation}
Comparing this with \eqref{trivial-connection},
we have
\begin{equation}\nonumber 
H_{\alpha\beta}=-\Omega_{\alpha\beta}^n~.
\end{equation}
Therefore from \eqref{omega-mr},
we obtain a key relation
\begin{equation}\label{H-B}
H_{\alpha\beta}=-\Omega_{\alpha\beta}^n=\frac{d_n-1}{d_1}B_{\alpha\beta}^n~.
\end{equation}

\subsection{Representation matrix of $\mathrm{Hess}(t^n)$}
Define vector fields $X_{\beta}$ ($1\leq \beta \leq n$) on $M$ by
\begin{equation}\label{Xbeta}
X_{\beta}=E\ast \partial_{\beta}=\sum_{\gamma=1}^n
{U^{\gamma}}_{\beta}\partial_{\gamma}
\end{equation}
They are $G$-invariant vector fields on $V$
and form a basis of the
$\mathbb{C}[V]^G$-module $\mathrm{Der}_{\mathbb{C}[V]}^G$ 
of $G$-invariant vector fields on $V$ \cite[\S 7.3]{KMS2018}. 
Take $dt^1,\ldots, dt^n$ as a basis of 
 the
$\mathbb{C}[V]^G$-module $\Omega_{\mathbb{C}[V]}^G$
of $G$-invariant $1$-forms on $V$.
Let $A=(A_{\alpha\beta})$ be the representation matrix of 
$\mathrm{Hess}(t^n)$  with respect to them:
\begin{equation}\label{def-A}
\mathrm{Hess}(t^n) (X_{\beta})=\sum_{\alpha=1}^n A_{\alpha \beta}\, dt^{\alpha}
~.
\end{equation}
Notice that
$$
A_{\alpha\beta}\in \mathbb{C}[t]~,\qquad 
\det A\neq 0~,
$$
since $\mathrm{Hess}(t^n)$ is the isomorphism 
(see (CS2) in \S \ref{section:CS-groups}).

\begin{lemma}\label{lemma:A-0}
$A=HU$  and 
$$
A_{\alpha\beta}=\frac{d_n-1}{d_1}C_{\alpha\beta}^n \quad 
\in \mathbb{C}[t']
$$ 
\end{lemma}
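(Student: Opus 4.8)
The plan is to compute the representation matrix $A$ of $\mathrm{Hess}(t^n)$ directly in terms of the structure constants, by applying the definitions and exploiting the relation between $\mathrm{Hess}(t^n)$ and the metric $h$. First I would unwind the definition \eqref{def-A}: since $X_\beta = \sum_\gamma {U^\gamma}_\beta \partial_\gamma$, we get $\mathrm{Hess}(t^n)(X_\beta) = \sum_\gamma {U^\gamma}_\beta \,\mathrm{Hess}(t^n)(\partial_\gamma)$, so it suffices to express $\mathrm{Hess}(t^n)(\partial_\gamma)$ in the basis $dt^1,\dots,dt^n$. By the definition \eqref{def-h} of $h$, pairing with $\partial_\alpha$ gives $\langle \mathrm{Hess}(t^n)(\partial_\gamma), \partial_\alpha\rangle = h(\partial_\gamma,\partial_\alpha) = H_{\gamma\alpha}$; since $h$ is symmetric, this shows $\mathrm{Hess}(t^n)(\partial_\gamma) = \sum_\alpha H_{\alpha\gamma}\,dt^\alpha$. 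Substituting back yields $A_{\alpha\beta} = \sum_\gamma H_{\alpha\gamma}{U^\gamma}_\beta$, i.e.\ $A = HU$.

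Next I would identify the entries. From the key relation \eqref{H-B} we have $H_{\alpha\gamma} = \frac{d_n-1}{d_1}B_{\alpha\gamma}^n$, so $A_{\alpha\beta} = \frac{d_n-1}{d_1}\sum_\gamma B_{\alpha\gamma}^n {U^\gamma}_\beta$. Now $\sum_\gamma B_{\alpha\gamma}^n{U^\gamma}_\beta$ is the $(n,\beta)$ entry of the matrix product $B_\alpha U$, and by \eqref{B-C} we have $B_\alpha = C_\alpha U^{-1} = U^{-1}C_\alpha$, hence $B_\alpha U = C_\alpha$. Therefore $\sum_\gamma B_{\alpha\gamma}^n{U^\gamma}_\beta = (C_\alpha)^n{}_\beta = C_{\alpha\beta}^n$, giving $A_{\alpha\beta} = \frac{d_n-1}{d_1}C_{\alpha\beta}^n$.

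Finally I would record the polynomiality and the $t^1$-independence. Since the natural SS is polynomial (Theorem \ref{duality-polynomial}), $C_{\alpha\beta}^n \in \mathbb{C}[t]$; and by \eqref{indep-t1}, $C_\alpha \in M_n(\mathbb{C}[t'])$, so in fact $C_{\alpha\beta}^n \in \mathbb{C}[t'] = \mathbb{C}[t^2,\dots,t^n]$, which matches the earlier assertion $A_{\alpha\beta}\in\mathbb{C}[t]$ from the isomorphism property. I do not expect any serious obstacle here: the statement is essentially a bookkeeping computation chaining the definition of $\mathrm{Hess}(t^n)$, the symmetry of $h$, the key relation \eqref{H-B}, and the duality relation \eqref{B-C}. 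The only point requiring a moment's care is the passage $\mathrm{Hess}(t^n)(\partial_\gamma) = \sum_\alpha H_{\alpha\gamma}dt^\alpha$, which uses that $dt^1,\dots,dt^n$ is dual to $\partial_1,\dots,\partial_n$ together with the symmetry $H_{\alpha\gamma}=H_{\gamma\alpha}$; once that is in place the rest is immediate.
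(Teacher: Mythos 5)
Your proposal is correct and follows essentially the same route as the paper: both establish $A=HU$ by expanding $X_\beta=\sum_\gamma {U^\gamma}_\beta\partial_\gamma$ and using the symmetry of $h$, then combine the key relation \eqref{H-B} with $B_\alpha U=C_\alpha$ from \eqref{B-C} to get $A_{\alpha\beta}=\frac{d_n-1}{d_1}C_{\alpha\beta}^n$, and conclude membership in $\mathbb{C}[t']$ from \eqref{indep-t1}. No gaps.
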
 

\begin{proof} First we show $A=HU$.
By definition of $A_{\alpha\beta}$, 
\begin{equation}\begin{split}\nonumber
A_{\alpha\beta}=&\langle
\mathrm{Hess}(t^n) (X_{\beta}),\partial_{\alpha}
\rangle
\stackrel{\eqref{Xbeta}}{=}
\sum_{\gamma=1}^n  {U^{\gamma}}_{\beta}
\langle
\mathrm{Hess}(t^n)(\partial_{\gamma}),\partial_{\alpha}
\rangle 
\\
&=\sum_{\gamma=1}^n {U^{\gamma}}_{\beta} \, h(\partial_{\gamma},\partial_{\alpha})
=
\sum_{\gamma=1}^n {U^{\gamma}}_{\beta}H_{\gamma\alpha}
=
\sum_{\gamma=1}^n {U^{\gamma}}_{\beta}H_{\alpha\gamma}
\\&=(HU)_{\alpha\beta}~.
\end{split}
\end{equation}

To show the second statement, recall \eqref{H-B}. 
\begin{equation}\begin{split}\nonumber
A_{\alpha\beta}&=\sum_{\gamma=1}^n H_{\alpha\gamma}{U^{\gamma}}_{\beta}
\stackrel{\eqref{H-B}}{=}
\frac{d_n-1}{d_1}{(B_{\alpha}U)^n}_{\beta}
\stackrel{\eqref{B-C}}{=}
\frac{d_n-1}{d_1}\,C_{\alpha\beta}^{n}~.
\end{split}
\end{equation}
Then
$A_{\alpha\beta}\in \mathbb{C}[t']$ follows from 
\eqref{indep-t1}.
\end{proof}

\begin{lemma}\label{lemma:A-1}
\begin{enumerate}
\item 
$
\mathrm{deg}\,A_{\alpha\beta}=d_1+d_n-d_{\alpha}-d_{\beta}
$
and 
$$
\begin{cases}
A_{\alpha\beta}=0 &(\alpha+\beta<n+1)\\
A_{\alpha\beta}\in \mathbb{C}\setminus\{0\} &(\alpha+\beta=n+1)
\\
A_{\alpha\beta}\in \mathbb{C}[t'] &(\alpha+\beta>n+1)
\end{cases}
$$
\item $A^{-1}\in M_n (\mathbb{C}[t'])$. 
\item 
$A^{-1}WA\in  M_n (\mathbb{C}[t'])$ is upper triangular 
and its diagonal entries are
$$
{(A^{-1}WA)^{\mu}}_{\mu}=\frac{d_{n+1-\mu}}{d_1}=\frac{d_1+d_n-d_{\mu}}{d_1}~.
$$
\item $A^{-1}\partial_{\alpha} A\in  M_n (\mathbb{C}[t'])$ is strictly upper triangular. 
\end{enumerate}
\end{lemma}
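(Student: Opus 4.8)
The plan is to deduce all four statements from two facts established above: the identity $A_{\alpha\beta}=\tfrac{d_n-1}{d_1}C_{\alpha\beta}^n$ of Lemma \ref{lemma:A-0} (which gives $A\in M_n(\mathbb{C}[t'])$, and $\det A\neq 0$ by condition (CS2)), and the degree formula $\mathrm{deg}\,A_{\alpha\beta}=d_1+d_n-d_\alpha-d_\beta$, immediate from \eqref{degree-C}. Thus each $A_{\alpha\beta}$ is a homogeneous polynomial in $t'$, and a nonzero homogeneous polynomial has nonnegative degree. For (1): by \eqref{cs-degrees} this degree is negative when $\alpha+\beta<n+1$, so $A_{\alpha\beta}=0$ there; it is positive when $\alpha+\beta>n+1$ (and $A_{\alpha\beta}\in\mathbb{C}[t']$ is already known); and when $\alpha+\beta=n+1$ condition (CS3) forces the degree to be $0$, so $A_{\alpha\beta}\in\mathbb{C}$. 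Letting $J$ be the anti-diagonal permutation matrix $J_{\alpha\beta}=\delta_{\alpha,\,n+1-\beta}$, the vanishing just obtained makes $AJ$ lower triangular with diagonal entries $A_{\alpha,\,n+1-\alpha}$, hence $\det A=\pm\prod_{\alpha}A_{\alpha,\,n+1-\alpha}$; since $\det A\neq 0$, every anti-diagonal entry $A_{\alpha,\,n+1-\alpha}$ lies in $\mathbb{C}\setminus\{0\}$. This proves (1), and since $AJ$ is triangular over $\mathbb{C}[t']$ with nonzero constant diagonal, Cramer's rule gives $(AJ)^{-1}\in M_n(\mathbb{C}[t'])$, so $A^{-1}=J(AJ)^{-1}\in M_n(\mathbb{C}[t'])$, which is (2).

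For (3) and (4) I would run pure homogeneity bookkeeping. Set $D=\mathrm{diag}(d_1,\dots,d_n)$, so $W=\tfrac1{d_1}D$; homogeneity of $A$ reads $E_{\mathrm{deg}}(A)=(d_1+d_n)A-DA-AD$, and applying the derivation $E_{\mathrm{deg}}$ to $A^{-1}A=I$ yields $E_{\mathrm{deg}}(A^{-1})=DA^{-1}+A^{-1}D-(d_1+d_n)A^{-1}$, i.e. $(A^{-1})_{\mu\beta}$ is homogeneous of degree $d_\mu+d_\beta-d_1-d_n$. Consequently $(A^{-1}WA)_{\mu\nu}$ is homogeneous of degree $d_\mu-d_\nu$ and $(A^{-1}\partial_\alpha A)_{\mu\nu}$ of degree $d_\mu-d_\nu-d_\alpha$, and both lie in $M_n(\mathbb{C}[t'])$ by (2) (while $\partial_1A=O$, since $A$ does not involve $t^1$). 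Because $d_1>\dots>d_n$ and $d_\alpha\geq 2$, these degrees are negative for $\mu>\nu$, respectively for $\mu\geq\nu$, so the polynomials vanish there: $A^{-1}WA$ is upper triangular and $A^{-1}\partial_\alpha A$ is strictly upper triangular. The diagonal entries of $A^{-1}WA$ have degree $0$, hence are constants and may be evaluated at $t'=0$, where $A$ collapses to $A_0=\mathrm{diag}(c_\alpha)J$ with $c_\alpha=A_{\alpha,\,n+1-\alpha}$; then $A_0^{-1}WA_0=\tfrac1{d_1}JDJ=\tfrac1{d_1}\mathrm{diag}(d_{n+1-\mu})$, and $d_{n+1-\mu}=d_1+d_n-d_\mu$ by (CS3), giving the stated diagonal.

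The only step that is not formal bookkeeping is (2): turning the degree estimates into genuine polynomiality of $A^{-1}$ needs the anti-triangular shape of $A$ and, crucially, that its anti-diagonal entries are nonzero scalars rather than polynomials of degree $0$ (which could vanish on part of $M^{\circ}$) — and this is exactly what condition (CS3) together with $\det A\neq 0$ supplies. Once $A^{-1}\in M_n(\mathbb{C}[t'])$ is in hand, parts (3) and (4) follow formally from the homogeneity computation above.
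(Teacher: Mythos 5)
Your proof is correct, and for parts (1) and (2) it coincides with the paper's argument: the degree formula $\mathrm{deg}\,A_{\alpha\beta}=d_1+d_n-d_\alpha-d_\beta$ from Lemma \ref{lemma:A-0} and \eqref{degree-C}, combined with \eqref{cs-degrees} and (CS3), gives the anti-triangular shape; $\det A\neq 0$ forces the anti-diagonal constants to be nonzero; and multiplying by the anti-diagonal permutation (your $J$ on the right, the paper's $T$ on the left) reduces invertibility over $\mathbb{C}[t']$ to that of a triangular matrix with nonzero constant diagonal. Where you diverge is in (3) and (4). The paper simply writes $A^{-1}WA=(TA)^{-1}(TWT)(TA)$ and $A^{-1}\partial_\alpha A=(TA)^{-1}\partial_\alpha(TA)$ and reads off triangularity and the diagonal entries from the fact that all three factors are (strictly) upper triangular with known diagonals; you instead compute the homogeneity weight of $A^{-1}$ from $E_{\mathrm{deg}}(A^{-1})=DA^{-1}+A^{-1}D-(d_1+d_n)A^{-1}$, conclude that $(A^{-1}WA)_{\mu\nu}$ and $(A^{-1}\partial_\alpha A)_{\mu\nu}$ are homogeneous of degrees $d_\mu-d_\nu$ and $d_\mu-d_\nu-d_\alpha$, kill the negative-degree entries using \eqref{degrees} and $d_\alpha\geq 2$, and evaluate the degree-zero diagonal at $t'=0$ where $A$ degenerates to $\mathrm{diag}(c_\alpha)J$. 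Both routes are sound and rest on part (2); the paper's is shorter once the $T$-conjugation is noticed, while yours makes explicit the weight bookkeeping (which is also the mechanism behind \eqref{indep-t1} and the later degree statements for $\Upsilon_\alpha$ and $X$ in the proof of Theorem \ref{cs-polynomial}), and correctly isolates the one non-formal input, namely that the anti-diagonal entries are nonzero scalars rather than merely degree-zero polynomials.
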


\begin{proof}
(1) 
By Lemma \ref{lemma:A-0} and \eqref{degree-C}, 
the degree of $A_{\alpha\beta}$ is 
$$
\mathrm{deg}\, A_{\alpha\beta}
=
\mathrm{deg}\, C_{\alpha\beta}^n
=d_1+d_n- d_{\alpha}-d_{\beta}~.
$$
Recall that
 $d_{\alpha}+d_{\beta}>d_1+d_n$  holds
if $\alpha+\beta<n+1$
(see \eqref{cs-degrees}).
Therefore if $\alpha+\beta<n+1$, $\mathrm{deg}\, A_{\alpha\beta}<0$, hence $A_{\alpha\beta}=0$.
Recall also that $d_{\alpha}+d_{\beta}=d_1+d_n$
if $\alpha+\beta=n+1$ (see (CS3) in \S \ref{section:CS-groups}). 
Therefore $A_{\alpha\beta}$ is a constant if 
$\alpha+\beta=n+1$. 
Then we have
$$
\det A=(-1)^{\frac{n(n-1)}{2}}\prod_{\alpha=1}^n  A_{\alpha,n+1-\alpha}~.
$$
Since $\det A\neq 0$, $A_{\alpha,n+1-\alpha}\neq 0$.
\\
(2) Let us put 
$$
T=\begin{pmatrix}&&1\\&\iddots&\\1&&\end{pmatrix}~.
$$
Then $TA$ is the matrix obtained by exchanging 
the $i$-th row and the $(n+1-i)$-th row ($1\leq i\leq n$) of $A$.
So $TA\in M_n(\mathbb{C}[t'])$ 
is 
upper triangular and its diagonal entries 
are nonzero constants $A_{n1},\ldots, A_{1n}$.
Therefore $TA$ is invertible,
$(TA)^{-1}=A^{-1}T \in M_n(\mathbb{C}[t'])$
is upper triangular and its diagonal entries 
are nonzero constants. 
$A^{-1}$ is obtained from $A^{-1}T$ by exchanging
the $j$-th column and the $(n+1-j)$-th column ($1\leq j\leq n$). 
Therefore $A^{-1}\in M_n( \mathbb{C}[t'])$.
\\
(3) and (4) immediately follow from 
$A^{-1}WA=(TA)^{-1}(TWT)(TA)$
and $A^{-1}\partial_{\alpha}A=(TA)^{-1}\partial_{\alpha}(TA)$.
\end{proof}

\begin{lemma}\label{lemma:A-2}
$A={}^t A$ and 
$$
AC_{\alpha}={}^tC_{\alpha}A~,\quad 
 AB_{\alpha}={}^tB_{\alpha}A
\quad (1\leq \alpha\leq n)~,
\quad
AU={}^tU A~.
$$
\end{lemma}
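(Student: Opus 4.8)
The statement to prove is that $A = {}^tA$ and that $A$ intertwines each of $C_\alpha$, $B_\alpha$, $U$ with their transposes. The plan is to deduce everything from the single identity $A = HU$ established in Lemma \ref{lemma:A-0}, together with the symmetry of $H$ and the commutativity relations among $U$, $C_\alpha$, $B_\alpha$ collected in \eqref{c1}, \eqref{uc}, \eqref{id-B}. First I would record that $H$ is symmetric: this is immediate from its definition $H_{\alpha\beta} = h(\partial_\alpha,\partial_\beta)$ since $h$ is a symmetric bilinear form (equivalently, from \eqref{hessian-metric-u}, $\tilde H_{ij} = \partial^2 x^n/\partial u^i\partial u^j$ is manifestly symmetric and the change of coordinates preserves symmetry). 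So ${}^tH = H$.

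Next, for the symmetry $A = {}^tA$: I would compute ${}^tA = {}^t(HU) = {}^tU\,{}^tH = {}^tU H$. To see this equals $HU$, note that the multiplication $\ast$ is compatible with the metric in the sense that $h(\partial_\alpha \ast \partial_\beta, \partial_\gamma)$ is totally symmetric — indeed this is exactly what makes $(\eta,\circledast,E)$ Frobenius, but at the level of $h$ and $\ast$ it follows from $A_{\alpha\beta} = \frac{d_n-1}{d_1}C^n_{\alpha\beta}$ in Lemma \ref{lemma:A-0}: the right-hand side is visibly symmetric in $\alpha$ and $\beta$ because $C^n_{\alpha\beta} = C^n_{\beta\alpha}$ by \eqref{c1}. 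Hence $A = {}^tA$ is in fact the cleanest consequence of Lemma \ref{lemma:A-0}, needing no further input. So I would dispatch the symmetry of $A$ first, directly from $A_{\alpha\beta}=\frac{d_n-1}{d_1}C^n_{\alpha\beta}$.

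For the intertwining relations I would argue as follows. From $A = HU$ and $[U,C_\alpha]=O$ we get $AC_\alpha = HUC_\alpha = HC_\alpha U$. It therefore suffices to show $HC_\alpha$ is symmetric, for then $HC_\alpha = {}^t(HC_\alpha) = {}^tC_\alpha H$, and transposing $AC_\alpha = HC_\alpha U$ gives ${}^tC_\alpha A = {}^tC_\alpha\,{}^tU H = {}^t(UC_\alpha)H \cdot$ — more directly, $AC_\alpha = {}^tC_\alpha H U = {}^tC_\alpha A$ using $A={}^tA$ once $HC_\alpha$ is symmetric. Now $(HC_\alpha)_{\beta\gamma} = \sum_\delta H_{\beta\delta}C^\delta_{\alpha\gamma} = h(\partial_\alpha\ast\partial_\gamma,\partial_\beta)$; symmetry in $\beta,\gamma$ is again the total symmetry of $h(\partial_\alpha\ast\partial_\beta,\partial_\gamma)$, which I would establish once and for all. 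The slick way: from $A_{\alpha\beta}=\frac{d_n-1}{d_1}C^n_{\alpha\beta}$ and $AC_\gamma = HU C_\gamma = HC_\gamma U$ one has $(AC_\gamma)_{\alpha\beta} = \sum A_{\alpha\mu}(C_\gamma)^\mu{}_\beta = \frac{d_n-1}{d_1}\sum_\mu C^n_{\alpha\mu}C^\mu_{\gamma\beta} = \frac{d_n-1}{d_1}(C_\alpha C_\gamma)^n{}_\beta = \frac{d_n-1}{d_1}(C_\gamma C_\alpha)^n{}_\beta$ by \eqref{c1}, and this is manifestly symmetric under $\alpha\leftrightarrow\beta$ after also using it with the roles swapped — i.e. $h(\partial_\alpha\ast\partial_\gamma,\partial_\beta) = \frac{d_n-1}{d_1}(C_\alpha C_\gamma C_\beta \text{ applied to the top component of } I)$, totally symmetric in $\alpha,\beta,\gamma$ because the $C$'s commute. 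This gives $AC_\alpha = {}^tC_\alpha A$. The same computation with $B_\alpha = C_\alpha U^{-1} = U^{-1}C_\alpha$ in place of $C_\alpha$: $AB_\alpha = HU\cdot U^{-1}C_\alpha = HC_\alpha$, which we have just shown is ${}^tC_\alpha H = {}^tC_\alpha H$; and ${}^tB_\alpha A = {}^t(U^{-1}C_\alpha)HU = {}^tC_\alpha\,{}^tU^{-1}HU = {}^tC_\alpha H U U^{-1} = {}^tC_\alpha H$ using ${}^tU H = H U$ (the $U$ case, proved next) — so $AB_\alpha = {}^tB_\alpha A$. Finally $AU = HU\cdot U = HU^2$ and ${}^tU A = {}^tU H U = HU\cdot U = HU^2$ once ${}^tU H = HU$, i.e. once we know $HU$ is symmetric — but $HU = A = {}^tA$, done.

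The only genuine content beyond bookkeeping is the total symmetry of $h(\partial_\alpha\ast\partial_\beta,\partial_\gamma)$, and I expect \textbf{that} to be the crux: everything else is transposition juggling using $A = HU = {}^tA$, $[U,C_\alpha]=[C_\alpha,C_\beta]=O$, and $B_\alpha = C_\alpha U^{-1}$. The symmetry itself I would extract cheaply from Lemma \ref{lemma:A-0}: since $A_{\alpha\beta} = \frac{d_n-1}{d_1}C^n_{\alpha\beta}$, the trilinear form $(\alpha,\beta,\gamma)\mapsto \sum_\mu A_{\alpha\mu}(C_\gamma)^\mu{}_\beta = \frac{d_n-1}{d_1}(C_\gamma C_\alpha)^n{}_\beta$ is symmetric in all three indices because the matrices $C_1=I,C_2,\dots,C_n$ pairwise commute and $C^n_{\alpha\beta}$ is symmetric; equivalently, $h(X\ast Y, Z)$ equals $\frac{d_n-1}{d_1}$ times the coefficient of $\partial_n$ in $X\ast Y\ast Z$, visibly symmetric. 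One should double-check the boundary bookkeeping (that $U$ is invertible on $M^\circ$ so $B_\alpha$ makes sense, and that $C_\gamma$ may be moved past $U^{-1}$), but these are exactly \eqref{B-C} and \eqref{id-B}. I would therefore organize the proof as: (i) ${}^tH=H$; (ii) $A={}^tA$ from $A_{\alpha\beta}=\frac{d_n-1}{d_1}C^n_{\alpha\beta}$; (iii) total symmetry of $h(X\ast Y,Z)$, hence $HC_\alpha={}^tC_\alpha H$; (iv) deduce the three intertwining relations by transposing $A=HU$, $AB_\alpha = HC_\alpha$, $AU = HU^2$ and using $A={}^tA$ together with the commutation relations.
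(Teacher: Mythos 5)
Your proposal is correct and takes essentially the same route as the paper: both proofs rest on the identity $A_{\alpha\beta}=\tfrac{d_n-1}{d_1}C^n_{\alpha\beta}$ from Lemma \ref{lemma:A-0}, the symmetry $C^{\gamma}_{\alpha\beta}=C^{\gamma}_{\beta\alpha}$, and the pairwise commutativity of the $C_\alpha$, $B_\alpha$, $U$, which make $A$, $AC_\alpha$ and $AB_\alpha$ visibly symmetric, after which the stated intertwining relations are just transposition. One small labeling slip worth noting: the totally symmetric trilinear form you display, $\tfrac{d_n-1}{d_1}{(C_\gamma C_\alpha)^n}_{\beta}$, is $\eta(X\ast Y,Z)$ rather than $h(X\ast Y,Z)$; the symmetry of $HC_\alpha=AB_\alpha$ that you need is the analogous computation using \eqref{H-B} and $[B_\alpha,C_\mu]=O$, which is exactly how the paper handles the $AB_\alpha$ case.
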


\begin{proof}
By Lemma \ref{lemma:A-0},  we have
$$
A_{\mu\nu}=\frac{d_n-1}{d_1}C_{\mu\nu}^n
\stackrel{\eqref{c1}}{=}\frac{d_n-1}{d_1}C_{\nu\mu}^n
=A_{\nu\mu}~.
$$
We also have
$$
(AC_{\alpha})_{\mu\nu}=\frac{d_n-1}{d_1}{(C_{\mu}C_{\alpha})^n}_{\nu}
\stackrel{\eqref{c1}}{=}
\frac{d_n-1}{d_1}{(C_{\alpha}C_{\mu})^n}_{\nu}
=\frac{d_n-1}{d_1}\sum_{\lambda=1}^n C_{\alpha\lambda}^nC_{\mu\nu}^{\lambda}~,
$$
and 
\begin{equation}\begin{split}\nonumber
(AB_{\alpha})_{\mu\nu}&=
\frac{d_n-1}{d_1}{(C_{\mu}B_{\alpha})^n}_{\nu}
\stackrel{\eqref{id-B}}{=}
\frac{d_n-1}{d_1}{(B_{\alpha}C_{\mu})^n}_{\nu}
=\frac{d_n-1}{d_1}\sum_{\lambda=1}^n 
B_{\alpha\lambda}^n C_{\mu\nu}^{\lambda}~.
\end{split}
\end{equation}
In all of the above equations,
the RHS's are symmetric with respect to the exchange of $\mu$
and $\nu$. So $A$, 
$AC_{\alpha}$  and $AB_{\alpha}$ are  symmetric matrices.  Therefore  $A={}^tA$, 
$AC_{\alpha}={}^t(AC_{\alpha})={}^tC_{\alpha}A$
and $AB_{\alpha}={}^t(AB_{\alpha})={}^tB_{\alpha}A$.
The remaining equation $AU={}^t UA $ easily follows
from $AC_{\alpha}={}^tC_{\alpha}A$ and \eqref{U-t}. 
\end{proof}

\subsection{Levi--Civita connections for Shephard groups}

In the $t$-coordinates,
the Levi--Civita connection $\boldsymbol{\nabla}^{cs}$
of the metric $h$
is expressed as
\begin{equation} \nonumber 
\begin{split}
\boldsymbol{\nabla}^{cs}_{\partial_{\alpha}} 
(\partial_{\beta})
&=\sum_{\gamma=1}^n S_{\alpha\beta}^{\gamma} 
\partial_{\gamma}
~,
\quad
S_{\alpha\beta}^{\gamma}=
\frac{1}{2}\sum_{\delta=1}^n H^{\gamma\delta}
\left(
\partial_{\alpha} H_{\delta\beta}
+\partial_{\beta} H_{\delta\alpha}
-\partial_{\delta} H_{\alpha\beta}
\right)~.
\end{split}
\end{equation}
Here $H=(H_{\alpha\beta})$ is 
the representation matrix of the metric $h$
defined in \S \ref{rep-h} and
$H^{\alpha\beta}$ ($1\leq \alpha,\beta\leq n$) denotes
the $(\alpha,\beta)$ entry of the inverse matrix $H^{-1}$.
We put
$$
{(S_{\alpha})^{\gamma}}_{\beta}=S_{\alpha\beta}^{\gamma}~.
$$
\begin{lemma}\label{lemma-S}
$$
2S_{\alpha}=A^{-1}\partial_{\alpha}A+
(-I-W+A^{-1}WA)B_{\alpha}~.
$$
\end{lemma}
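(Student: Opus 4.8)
The plan is to compute both sides directly in the $t$-coordinates, starting from the definition of the Christoffel symbols
$$
S_{\alpha\beta}^{\gamma}=\frac{1}{2}\sum_{\delta=1}^n H^{\gamma\delta}\bigl(\partial_{\alpha}H_{\delta\beta}+\partial_{\beta}H_{\delta\alpha}-\partial_{\delta}H_{\alpha\beta}\bigr),
$$
and rewriting everything in terms of $A$, $B_{\alpha}$, $W$, $U$ using the relations already established. The key input is Lemma \ref{lemma:A-0}, which gives $A=HU$, hence $H=AU^{-1}$, and the symmetry relations of Lemma \ref{lemma:A-2} ($A={}^tA$, $AB_{\alpha}={}^tB_{\alpha}A$, $AU={}^tUA$). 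First I would express $H_{\alpha\beta}=(AU^{-1})_{\alpha\beta}$; since $AU^{-1}=A(U^{-1})$ and $U^{-1}C_{\alpha}=B_{\alpha}$ is symmetric after multiplying by $A$, it is convenient to note $H=A U^{-1}$ with $H^{-1}=U A^{-1}$. Then $\partial_{\alpha}H_{\delta\beta}=(\partial_{\alpha}A\cdot U^{-1})_{\delta\beta}+(A\,\partial_{\alpha}U^{-1})_{\delta\beta}$, and $\partial_{\alpha}U^{-1}=-U^{-1}(\partial_{\alpha}U)U^{-1}$ with $\partial_{\alpha}U=WC_{\alpha}-C_{\alpha}W+C_{\alpha}$ from \eqref{dU}.

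Assembling the three terms, I would form $2S_{\alpha}=H^{-1}\cdot(\text{sum of derivative terms})$ viewed as a matrix identity: the $\gamma,\beta$ entry of $2S_{\alpha}$ is $\sum_{\delta}H^{\gamma\delta}(\partial_{\alpha}H_{\delta\beta}+\partial_{\beta}H_{\delta\alpha}-\partial_{\delta}H_{\alpha\beta})$. The first term contributes $H^{-1}\partial_{\alpha}H$; using $H^{-1}=UA^{-1}$ and $\partial_{\alpha}H=\partial_{\alpha}A\cdot U^{-1}-A U^{-1}(\partial_{\alpha}U)U^{-1}$, this becomes $UA^{-1}\partial_{\alpha}A\,U^{-1}-U U^{-1}(\partial_{\alpha}U)U^{-1}$. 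The second and third terms are where the symmetry relations do the work: the point of Lemma \ref{lemma:A-2} is precisely that $H_{\delta\alpha}$ and $H_{\alpha\beta}$ can be transposed so that the $\delta$-index (the one being contracted or differentiated) lands in the right slot, converting the "$\partial_{\beta}H_{\delta\alpha}$" and "$\partial_{\delta}H_{\alpha\beta}$" pieces into expressions that combine with the first term. After contracting with $H^{-1}$ on the left, the net effect should be that the second-derivative-of-$A$ pieces cancel between the $\beta$- and $\delta$-terms (this is the usual phenomenon for Hessian/flat metrics), leaving only first-order terms in $A$ and terms linear in $B_{\alpha}$.

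The cleanest route is probably to conjugate everything by $U$: since $\boldsymbol{\nabla}^{cs}$ is the Levi--Civita connection of $h=AU^{-1}$ and $A$ is (up to the constant $\frac{d_n-1}{d_1}$) the Hessian matrix of $t^n$ itself in the $t$-coordinates via $A_{\alpha\beta}=\frac{d_n-1}{d_1}C^n_{\alpha\beta}$, one has a clean formula for the Christoffel symbols of the metric with matrix $A$ alone, namely the "flat coordinates of a potential" computation, and then one accounts for the extra factor $U^{-1}$. Concretely, I expect the identity to reduce to: $2S_{\alpha}=A^{-1}\partial_{\alpha}A$ plus a correction coming from $\partial_{\alpha}(U^{-1})$ and the reshuffling of indices, and that correction, after simplification using $\partial_{\alpha}U=WC_{\alpha}-C_{\alpha}W+C_{\alpha}$, $B_{\alpha}=U^{-1}C_{\alpha}=C_{\alpha}U^{-1}$, and $[U,C_{\alpha}]=O$, collapses to $(-I-W+A^{-1}WA)B_{\alpha}$. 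Note $A^{-1}WA$ appears exactly because the $W$ from $\partial_{\alpha}U$ gets conjugated by $A$ when it is moved past the transposed factors coming from Lemma \ref{lemma:A-2}; the $-I$ comes from the "$+C_{\alpha}$" in $\partial_{\alpha}U$ and the $-W$ from the untransposed $WC_{\alpha}$ term. The main obstacle will be bookkeeping: tracking which of the three Christoffel terms each matrix factor comes from, and correctly applying the transpose relations $AB_{\alpha}={}^tB_{\alpha}A$ and $AU={}^tUA$ to line up indices so the cancellation of the $\partial_{\alpha}A$-versus-$\partial_{\delta}A$ contributions is manifest; once the symmetry relations are deployed in the right places the algebra is forced, but it is easy to mismanage a transpose and end up with a spurious ${}^tB_{\alpha}$ or ${}^tW$ instead of the stated $A^{-1}WA$.
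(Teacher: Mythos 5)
Your plan is essentially the paper's proof: the paper likewise starts from the Christoffel formula, writes $H$ in terms of $A$ and $U$ (in the equivalent form $H_{\alpha\beta}=\tfrac{d_n-1}{d_1}B_{\alpha\beta}^n$ together with the derivative formula \eqref{dB}, which encodes your $\partial_{\alpha}U=WC_{\alpha}-C_{\alpha}W+C_{\alpha}$), uses the transpose identities of Lemma \ref{lemma:A-2} to line up the contracted indices, and finishes by contracting with $H^{-1}$ so that $U^{-1}H^{-1}=A^{-1}$. You have correctly identified every ingredient and the provenance of each term in the answer ($-W$, $-I$, and $A^{-1}WA$); what remains is only to carry out the index bookkeeping you describe, which goes through exactly as you anticipate.
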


\begin{proof}
With \eqref{H-B} and \eqref{dB}, we have
\begin{equation}\begin{split}\nonumber
\partial_{\alpha} H_{\delta\beta}&=
\frac{d_n-1}{d_1}\partial_{\alpha}B_{\delta\beta}^n
\\&=\frac{d_n-1}{d_1}{\big((\partial_{\alpha}C_{\delta}) U^{-1}-B_{\delta}WB_{\alpha}
+U^{-1}C_{\delta}C_{\alpha}WU^{-1}-B_{\delta}B_{\alpha}\big)^{n}}_{\beta}~.
\end{split}
\end{equation}
Therefore
\begin{equation}\begin{split}\nonumber
\partial_{\alpha} H_{\delta\beta}- \partial_{\delta} H_{\alpha\beta}
&
\stackrel{\eqref{c1} \eqref{id-B}}{=}\frac{d_n-1}{d_1}{(B_{\alpha}WB_{\delta}-B_{\delta}WB_{\alpha})^{n}}_{\beta}
\\
&\stackrel{\eqref{H-B}}{=}(HWB_{\delta})_{\alpha\beta}-(HWB_{\alpha})_{\delta\beta}
\\&\stackrel{\eqref{id-B}}{=}(HWB_{\beta})_{\alpha\delta}-(HWB_{\alpha})_{\delta\beta}~.
\end{split}
\end{equation}
On the other hand, by \eqref{H-B} and 
Lemma \ref{lemma:A-0},
\begin{equation}\begin{split}\nonumber
\partial_{\beta} H_{\delta\alpha}&=
\partial_{\beta} H_{\alpha\delta}
=\frac{d_n-1}{d_1}\partial_{\beta}B_{\alpha\delta}^n
\\&=\frac{d_n-1}{d_1}{(\partial_{\beta}C_{\alpha}\,U^{-1}-B_{\alpha}W B_{\beta}+
U^{-1}C_{\alpha}C_{\beta}WU^{-1}-B_{\alpha}B_{\beta})^n}_{\delta}
\\&=
\frac{d_n-1}{d_1}{(\partial_{\alpha}C_{\beta}\,U^{-1}-B_{\alpha}W B_{\beta}+
C_{\beta}B_{\alpha}WU^{-1}-C_{\beta}B_{\alpha}U^{-1})^n}_{\delta}
\\
&=
((\partial_{\alpha}A) U^{-1})_{\beta\delta}
-(HWB_{\beta})_{\alpha\delta}+
(AB_{\alpha}WU^{-1})_{\beta\delta}
-(AB_{\alpha}U^{-1})_{\beta\delta}
\end{split}
\end{equation}
Adding these two equations, we obtain
\begin{equation}\nonumber
\partial_{\alpha}H_{\delta\beta}+\partial_{\beta}H_{\delta\alpha}
-\partial_{\delta}H_{\alpha\beta}
=\big((\partial_{\alpha}A+AB_{\alpha}(W-I))U^{-1} \big)_{\beta\delta}
-(HWB_{\alpha})_{\delta\beta}~.
\end{equation}
Therefore 
\begin{equation}\nonumber
\begin{split}
2S_{\alpha\beta}^{\gamma}&=
\sum_{\delta=1}^n
H^{\gamma\delta}
\big((\partial_{\alpha}A+AB_{\alpha}(W-I))U^{-1} \big)_{\beta\delta}
-\sum_{\delta=1}^n
H^{\gamma\delta}(HWB_{\alpha})_{\delta\beta}
\\&=
{\big((\partial_{\alpha}A+AB_{\alpha}(W-I))U^{-1} H^{-1}
\big)_{\beta}}^{\gamma}
-{(WB_{\alpha})^{\gamma}}_{\beta}
\\
&={\big((\partial_{\alpha}A+AB_{\alpha}(W-I))A^{-1}
\big)_{\beta}}^{\gamma}
-{(WB_{\alpha})^{\gamma}}_{\beta}~.
\end{split}
\end{equation}
Using Lemma \ref{lemma:A-2},  we see that the matrix in the first term is 
the transpose of 
$$
A^{-1}\partial_{\alpha}A+A^{-1}(W-I)AB_{\alpha}~.
$$
Therefore
$$
2S_{\alpha\beta}^{\gamma}={(A^{-1}\partial_{\alpha}A+(
-I-W+A^{-1}WA)B_{\alpha})^{\gamma}}_{\beta}~.
$$
\end{proof}

\section{Proofs of 
Theorem \ref{theorem1},
Theorem \ref{cs-polynomial} and 
Theorem \ref{theorem:multiplication}}
\label{section:proof-multiplication}
\subsection{Proofs of 
Theorem \ref{theorem1}-(1)(2) and Theorem   \ref{theorem:multiplication}}
\label{proof1}
First we show that $$\det S_1\neq 0$$ holds on $M^{\circ}$.
Substituting  $\partial_1 A=O$ and $B_1=U^{-1}$ into Lemma \ref{lemma-S}, we have
\begin{equation}\label{S1}
 2S_1= (-I-W+A^{-1}WA)U^{-1}~.
\end{equation}
By Lemma \ref{lemma:A-1} (3), 
$-I-W+A^{-1}WA$ is upper triangular and 
\begin{equation}\nonumber
{(-I-W+A^{-1}WA)^{\mu}}_{\mu}=-1-\frac{d_{\mu}}{d_1}+
\frac{d_1+d_n-d_{\mu}}{d_1}
=\frac{-2d_{\mu}+d_n}{d_1}\stackrel{\eqref{degrees}}{<}0~.
\end{equation}
This implies $\det (-I-W+A^{-1}WA)\neq 0$.
Since $\det U\neq 0$ on $M^{\circ}$,
$\det S_1\neq 0$ on $M^{\circ}$.
The representation matrix of the map $X\to \boldsymbol{\nabla}^{cs}_X e$ is given by $S_1$. 
Therefore $\det S_1 \neq 0$ implies that 
this map is invertible. This proves  
Theorem \ref{theorem1}-(1).
Thus \eqref{ass4-S} determines the multiplication $\diamond$. 

Next we prove Theorem \ref{theorem:multiplication}.
Let $B^{cs}_{\alpha}$ $(1\leq \alpha\leq n)$ denote the 
representation matrix of $\partial_{\alpha}\diamond$.
To show that the multiplication $\diamond$ 
agrees with $\star$, it is enough to show that 
$B_{\alpha}^{cs}=B_{\alpha}$.
Notice that \eqref{ass4-S} is written as follows.
\begin{equation}\nonumber 
O=\partial_{\alpha}S_1+S_{\alpha}S_1-S_1S_{\alpha}+S_1B_{\alpha}^{cs}=
\partial_1 S_{\alpha}+S_1 B_{\alpha}^{cs}~.
\end{equation}
In the last line, we used  the flatness of $\boldsymbol{\nabla}^{cs}$.
Therefore 
\begin{equation}\label{Bcs}
B_{\alpha}^{cs}=-S_1^{-1}\partial_1 S_{\alpha}~.
\end{equation}
Since $A$ is independent of $t^1$ (Lemma \ref{lemma:A-1}), we have
\begin{equation}\nonumber 
2\partial_1 S_{\alpha}
=(-I-W+A^{-1}WA) \partial_1 B_{\alpha}
\stackrel{\eqref{dB2}}{=}-(-I-W+A^{-1}WA) U^{-1}B_{\alpha}~.
\end{equation}
Substituting this equation and \eqref{S1} into \eqref{Bcs}, we obtain
$
B_{\alpha}^{cs}
=B_{\alpha}.
$

Theorem \ref{theorem1}-(2) immediately follows from Theorem \ref{theorem:multiplication}.

\subsection{Proof of Theorem \ref{theorem1} (3)}
We show that $(\boldsymbol{\nabla}^{cs},\diamond=\star,E)$ satisfies ({\bf ASS1})--({\bf ASS4}). 
We already showed that $\boldsymbol{\nabla}^{cs}$ and $E$
satisfy ({\bf ASS3}) in Lemma \ref{cs-flat}. 
It is clear that $\diamond$ satisfies ({\bf ASS2})
since $\star=\diamond$ satisfies ({\bf ASS2}).
It is also clear that ({\bf ASS4}) holds since
the multiplication $\diamond$ is made from 
$\boldsymbol{\nabla}^{cs}$ and $e$ by the condition
({\bf ASS4}). 
So
we only have to check the condition ({\bf ASS1}). 
 
In the matrix representation, 
({\bf ASS1}) is equivalent to 
$$
\partial_{\alpha}B_{\beta}+[S_{\alpha},B_{\beta}]
=\partial_{\beta}B_{\alpha}+[S_{\beta},B_{\alpha}]. 
$$
Using Lemma \ref{lemma-S},
this is equivalent to 
\begin{equation}\label{to-show-ass1}
\begin{split}
2\left( 
\partial_{\alpha}B_{\beta}-\partial_{\beta}B_{\alpha}
\right)
&=-[A^{-1}\partial_{\alpha}A,B_{\beta} ]+
[A^{-1}\partial_{\beta}A,B_{\alpha} ]
\\&+B_{\beta}(-W+A^{-1}WA)B_{\alpha}
-B_{\alpha}(-W+A^{-1}WA)B_{\beta}~.
\end{split}
\end{equation}
To show \eqref{to-show-ass1},
let us compute $\partial_{\alpha}B_{\beta}-\partial_{\beta}B_{\alpha}$ in two ways.
By \eqref{dB},  we have
 \begin{equation}\nonumber
\partial_{\alpha}B_{\beta}-\partial_{\beta}B_{\alpha}=
-B_{\beta}W B_{\alpha}+B_{\alpha}WB_{\beta}~.
\end{equation}
On the other hand, using $B_{\alpha}=A^{-1}({}^tB_{\alpha})A$
(Lemma \ref{lemma:A-2}), we have 
\begin{equation}\nonumber
\begin{split}
&\partial_{\alpha}B_{\beta}-\partial_{\beta}B_{\alpha}=
\partial_{\alpha}(A^{-1}\,{}^t B_{\beta} A) -\partial_{\beta}
(A^{-1}\,{}^t B_{\alpha} A)
\\&=-A^{-1}\partial_{\alpha}A \underbrace{A^{-1}{}^tB_{\beta}A}_{=B_{\beta}}
+A^{-1}\partial_{\alpha}{}^tB_{\beta} A+
\underbrace{A^{-1}B_{\beta}}_{=B_{\beta}A^{-1}}\partial_{\alpha}A
-(\alpha\leftrightarrow \beta)
\\
&=-[A^{-1}\partial_{\alpha}A,B_{\beta}]+
[A^{-1}\partial_{\beta}A,B_{\alpha}]
+A^{-1}{}^t(\partial_{\alpha} B_{\beta}-\partial_{\beta}B_{\alpha})
A^{-1}
\\&=
-[A^{-1}\partial_{\alpha}A,B_{\beta}]+
[A^{-1}\partial_{\beta}A,B_{\alpha}]
-B_{\alpha} A^{-1}WA B_{\beta}+
 B_{\beta} A^{-1}WA B_{\alpha}~.
\end{split}
\end{equation}
Adding these two equations, we obtain
\eqref{to-show-ass1}.
\subsection{Proof of Theorem \ref{theorem1} (4)}

To show that $(h,\star, e)$ is an almost Frobenius structure,
we have to check \eqref{af1}, \eqref{af2}, \eqref{af3}
with $\boldsymbol{\nabla},g$ replaced by $\boldsymbol{\nabla}^{cs}, h$.
Eq. \eqref{af1} trivially holds since $\boldsymbol{\nabla}^{cs}$ is the 
Levi--Civita connection of $h$.
In the matrix form, 
\eqref{af2} and \eqref{af3} are equivalent to the followings.
$$
\sum_{\lambda=1}^n B_{\alpha\beta}^{\lambda}H_{\lambda\gamma}
=\sum_{\lambda=1}^n B_{\beta\gamma}^{\lambda}H_{\alpha\lambda}~, \quad 
\partial_1 H+HB_1=O~.
$$
But these immediately follows from \eqref{H-B},
\eqref{id-B} and \eqref{dB2}.

\subsection{Proof of Theorem \ref{cs-polynomial}}
\label{cs-polynomial-proof}
The proof is almost the same as the proof of \cite[Theorem 7.5-(3)]{KMS2018}. 

Consider
the Saito structure $(\nabla^{cs},\circledast, e)$ dual to $(\boldsymbol{\nabla}^{cs}, \diamond=\star, E)$.
Comparing the first equations of 
\eqref{duality-dual} and \eqref{CS-dual}, 
we see that the multiplication $\circledast$ agrees with
the multiplication $\ast$ of the natural Saito structure for $G$.
To show that $(\nabla^{cs},\circledast=\ast, e)$ 
is a polynomial Saito structure, we will find 
a set of basic invariants $s=(s^1,\ldots, s^n)$ satisfying
the following (i) and (ii): 
\begin{enumerate}
\item[(i)]
 $s=(s^1,s^2,\ldots, s^n)$ 
is a system of 
$\nabla^{cs}$-flat coordinates.
\item[(ii)]
The structure constants of the multiplication $\circledast=\ast$
with respect to the basis
$\frac{\partial}{\partial s^1},\ldots,\frac{\partial}{\partial s^n}$ 
are polynomials in $s$.
\end{enumerate}

Since $\circledast=\ast$,
the representation matrix of 
$\partial_{\alpha}\circledast $ is $C_{\alpha}$.
Let
$$
\nabla^{cs}_{\partial_{\alpha}}(\partial_{\beta})=\sum_{\gamma=1}^n \Upsilon_{\alpha\beta}^{\gamma}~,
\quad {(\Upsilon_{\alpha})^{\gamma}}_{\beta}=\Upsilon_{\alpha\beta}^{\gamma}~.
$$
Then by the second relation of \eqref{CS-dual}
and Lemma \ref{lemma-S},
\begin{equation}\label{upsilon}
\Upsilon_{\alpha}=
S_{\alpha}-S_1 C_{\alpha}=\frac{1}{2} A^{-1}\partial_{\alpha}A
\quad (1\leq \alpha\leq n)~.
\end{equation}
Notice that
Lemma \ref{lemma:A-1} implies that
$\Upsilon_{\alpha}$ 
is strictly upper triangular and that
$\mathrm{deg}\,\Upsilon_{\alpha\beta}^{\gamma}=d_{\gamma}
-d_{\alpha}-d_{\beta}$.  
Moreover, the flatness\footnote{
The connection $\nabla^{cs}$ is flat
since it is constructed from $\boldsymbol{\nabla}^{cs}$
by \eqref{CS-dual}.
See \cite[Proposition 3.7]{KMS2018}.} 
of $\nabla^{cs}$ implies that 
\begin{equation}\label{upsilon-flat}
\partial_{\alpha}\Upsilon_{\beta}-\partial_{\beta}\Upsilon_{\alpha}
+[\Upsilon_{\alpha},\Upsilon_{\beta}]=O~.
\end{equation}
\begin{lemma}\label{lemma-X}
There exists a unique upper unitriangular matrix 
$X\in M_n(\mathbb{C}[t'])$ with homogeneous entries 
satisfying
\begin{equation}\label{def-X}
\partial_{\alpha}X+\Upsilon_{\alpha} X=O
\quad (1\leq \alpha\leq n)~.
\end{equation}
Moreover $\mathrm{deg}\,{X^{\gamma}}_{\beta}=d_{\gamma}-d_{\beta}$.
\end{lemma}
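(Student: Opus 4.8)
The plan is to construct $X$ column by column, or rather to integrate the system \eqref{def-X} directly, exploiting the strict upper triangularity of the $\Upsilon_\alpha$ and the vanishing of $\partial_1$ on the relevant objects. First I would note that since $A\in M_n(\mathbb{C}[t'])$ is independent of $t^1$ (Lemma \ref{lemma:A-1}), we have $\Upsilon_1=\tfrac12 A^{-1}\partial_1 A=O$, so \eqref{def-X} for $\alpha=1$ reads $\partial_1 X=O$, which forces $X\in M_n(\mathbb{C}[t'])$ automatically once a solution exists. Thus it suffices to integrate the system in the variables $t^2,\ldots,t^n$, and the compatibility condition \eqref{upsilon-flat} is exactly the Frobenius (flatness) condition guaranteeing local existence of a fundamental solution; the initial condition $X|_{0}=I$ then pins down a unique solution, which I must show is upper unitriangular and polynomial with the stated degrees.

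The key steps, in order, are: (1) rewrite \eqref{def-X} as a triangular hierarchy. Since $\Upsilon_\alpha$ is strictly upper triangular, the equation for the entry ${X^\gamma}_\beta$ involves only entries ${X^\delta}_\beta$ with $\delta>\gamma$; in particular the bottom row $\gamma=n$ gives $\partial_\alpha {X^n}_\beta=0$ for all $\alpha$, so ${X^n}_\beta$ is constant, and the unitriangular normalization forces ${X^n}_\beta=\delta^n_\beta$. (2) Descending induction on $\gamma$: assuming rows $\gamma+1,\ldots,n$ are determined, the equation $\partial_\alpha {X^\gamma}_\beta=-\sum_{\delta>\gamma}{(\Upsilon_\alpha)^\gamma}_\delta {X^\delta}_\beta$ has a polynomial right-hand side; the compatibility \eqref{upsilon-flat}, restricted to this row, says precisely that this closed $1$-form (in $t^2,\ldots,t^n$) is exact, so ${X^\gamma}_\beta$ exists and is a polynomial in $t'$, unique up to an additive constant fixed by ${X^\gamma}_\beta|_0=\delta^\gamma_\beta$. (3) Track degrees via the Euler operator: since $\mathrm{deg}\,{(\Upsilon_\alpha)^\gamma}_\delta=d_\gamma-d_\alpha-d_\delta$ and $t^\alpha$ contributes $d_\alpha$, applying $E_{\mathrm{deg}}$ to \eqref{def-X} shows $E_{\mathrm{deg}}{X^\gamma}_\beta=(d_\gamma-d_\beta){X^\gamma}_\beta$, i.e. ${X^\gamma}_\beta$ is homogeneous of degree $d_\gamma-d_\beta$ (and the diagonal entries, of degree $0$, are the constants $1$, consistent with unitriangularity; entries below the diagonal would have negative degree, hence vanish, reconfirming upper triangularity). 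For $\gamma>\beta$ the degree $d_\gamma-d_\beta$ is negative by \eqref{degrees}, so actually $X$ is even block-constrained, but I only need it to be a polynomial.

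The main obstacle is step (2): ensuring that the successive integrations stay within $\mathbb{C}[t']$ rather than producing, say, logarithmic or rational terms. This is where \eqref{upsilon-flat} does the real work — it is not merely an existence statement but guarantees that the $t'$-dependent $1$-form one integrates at each stage is a total differential of a \emph{polynomial}, because its components are polynomials and a closed polynomial $1$-form on affine space has a polynomial primitive. I would spell this out carefully: fix $\beta$ and $\gamma$, let $\omega=\sum_\alpha\big(-\sum_{\delta>\gamma}{(\Upsilon_\alpha)^\gamma}_\delta{X^\delta}_\beta\big)\,dt^\alpha$; the induction hypothesis makes the coefficients polynomial, and differentiating \eqref{upsilon-flat} against the already-constructed rows shows $d\omega=0$, whence $\omega=d\,{X^\gamma}_\beta$ for a polynomial ${X^\gamma}_\beta$, normalized by its value at the origin. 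Uniqueness is immediate from $\partial_\alpha X=-\Upsilon_\alpha X$ together with the prescribed constant term, since any two solutions differ by a constant matrix that must be $O$. This completes the construction and the degree count, establishing the lemma.
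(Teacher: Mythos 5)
Your proposal is correct and follows essentially the same route as the paper: a triangular recursion on the entries of $X$, with the flatness \eqref{upsilon-flat} supplying integrability of each polynomial closed $1$-form and homogeneity (equivalently, the normalization at the origin) supplying uniqueness and the degree count. The only differences are cosmetic — the paper fixes a column $\beta$ and descends in $\gamma$, deduces $X\in M_n(\mathbb{C}[t'])$ from $\deg {X^{\gamma}}_{\beta}<d_1$ rather than from $\Upsilon_1=O$, and reads off degrees directly from $\deg\Upsilon_{\alpha\beta}^{\gamma}$ instead of via the Euler operator.
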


\begin{proof}
Let us set  
$${X^{\gamma}}_{\gamma}=1\quad (1\leq \gamma\leq n)~,
\quad
{X^{\gamma}}_{\beta}=0\quad (1\leq \beta<\gamma\leq n)~.
$$
We will solve the equation \eqref{def-X}.
Component-wise, it is written as
\begin{equation}\label{def-X-2}
\partial_{\alpha}{X^{\gamma}}_{\beta}=-
\sum_{\delta=1}^n
\Upsilon_{\alpha\delta}^{\gamma} {X^{\delta}}_{\beta}\quad
(1\leq \alpha\leq n)~.
\end{equation}
Notice that 
the sum in the RHS is taken for $\gamma<\delta\leq \beta$
since $\Upsilon_{\alpha\delta}^{\gamma}=0$ if 
$\gamma\geq \delta$ and 
${X^{\delta}}_{\beta}=0$ if $\delta>\beta$.

Now let us fix $1\leq \beta\leq n$. 
If $\gamma\geq \beta$,  the RHS of \eqref{def-X-2} is zero
because $\delta$ satisfying $\gamma<\delta\leq \beta$
does not exist.
The LHS is also zero since ${X^{\gamma}}_{\beta}=0$ or $1$. So \eqref{def-X-2} holds.  

Consider the case $\gamma=\beta-1$. 
The system of partial differential equations 
\eqref{def-X-2} for ${X^{\beta-1}}_{\beta}$ becomes
$$
\partial_{\alpha}{X^{\beta-1}}_{\beta}=-
\Upsilon_{\alpha\beta}^{\beta-1} \quad (1\leq \alpha\leq n)~.
$$
The homogeneous polynomial solution ${X^{\beta-1}}_{\beta}\in \mathbb{C}[t]$ uniquely exists
due to  \eqref{upsilon-flat}.
Its degree is 
$$
\mathrm{deg}\,{X^{\beta-1}}_{\beta}=\mathrm{deg}\,
\Upsilon_{\alpha\beta}^{\beta-1} +d_{\alpha}=
(d_{\beta-1}-d_{\alpha}-d_{\beta})+d_{\alpha}
=d_{\beta-1}-d_{\beta}~.
$$
For $\gamma=\beta-2$,
\eqref{def-X-2} becomes
$$
\partial_{\alpha}{X^{\beta-2}}_{\beta}=-
\Upsilon_{\alpha\beta}^{\beta-2}
-\Upsilon_{\alpha,\beta-1}^{\beta-2} 
{X^{\beta-1}}_{\beta}
\quad (1\leq \alpha\leq n)~.
$$
The homogeneous polynomial solution ${X^{\beta-2}}_{\beta}\in \mathbb{C}[t]$ uniquely exists
due to  \eqref{upsilon-flat} and 
its degree is  $d_{\beta-2}-d_{\beta}$.
For $\gamma=\beta-3,\ldots, 2,1$,
the similar argument shows the existence of 
homogeneous polynomial solution ${X^{\gamma}}_{\beta}\in\mathbb{C}[t]$
of degree $d_{\gamma}-d_{\beta}$.

Since $\mathrm{deg}\,{X^{\gamma}}_{\beta}<d_{\gamma}\leq d_1$,
${X^{\gamma}}_{\beta}$ is independent of $t^1$, i.e.
${X^{\gamma}}_{\beta}\in \mathbb{C}[t']$.
\end{proof}

Now let $X\in M_n(\mathbb{C}[t'])$ be the matrix
in Lemma \ref{lemma-X}.
Since $X\in M_n(\mathbb{C}[t'])$ is upper unitriangular,
$X$ is invertible and 
$X^{-1}\in M_n(\mathbb{C}[t'])$
is also upper unitriangular.
Moreover $\mathrm{deg}{(X^{-1})^{\gamma}}_{\beta}=d_{\gamma}-d_{\beta}$. 
Then we can find homogeneous polynomials 
$s^1,\ldots, s^n \in \mathbb{C}[t]$ 
satisfying
$$
ds^{\alpha}=\sum_{\beta=1}^n {(X^{-1})^{\alpha}}_{\beta} dt^{\beta},\quad 
\mathrm{deg}\,s^{\alpha}=d_{\alpha}\quad
(1\leq \alpha\leq n)~.
$$
By degree consideration, 
$s^1,\ldots, s^n$ are of the following forms:
$$s^n=t^n~,\quad s^{n-1}=t^{n-1}+F_{n-1}(t^n)~,
\quad\ldots,\quad
s^1=t^1+F_1(t^2,\ldots, t^n)~.
$$
We can solve these equations for $t$ and
express $t^1,\ldots, t^n$ as 
polynomials in $s^1,\ldots, s^n$. 
Therefore $s=(s^1,\ldots, s^n)$ is a set of basic invariants.
We obtain $\mathbb{C}[s]=\mathbb{C}[t]$
and $\mathbb{C}[s']=\mathbb{C}[t']$.

Next let us show (i). 
Taking the dual of $ds^1,\ldots, ds^n$,  we obtain
\begin{equation}\label{vector-s}
\frac{\partial}{\partial s^{\beta}}=\sum_{\gamma=1}^n{X^{\gamma}}_{\beta}{\partial \gamma}\quad 
(1\leq \beta\leq n)~.
\end{equation}
Applying $\nabla^{cs}_{\partial_{\alpha}}$, we have
$$
\nabla^{cs}_{\partial_{\alpha}} \frac{\partial}{\partial s^{\beta}}
=\sum_{\delta=1}^n {(
\partial_{\alpha }X +\Upsilon_{\alpha}X)^{\delta}}_{\beta}
\partial_{\delta}
\stackrel{\eqref{def-X}}{=}0~.
$$
Thus $s=(s^1,\ldots,s^n)$ is a system of $\nabla^{cs}$-flat
coordinates.

Finally we show (ii).
Denote by $\hat{C}_{\alpha\beta}^{\gamma}$
the structure constants of $\circledast=\ast$ 
with respect to the new basis $\frac{\partial}{\partial s^1},\ldots,
\frac{\partial}{\partial s^n}$:
$$
 \frac{\partial}{\partial s^{\alpha}}\ast
 \frac{\partial}{\partial s^{\beta}}
 =\sum_{\gamma=1}^n \hat{C}_{\alpha\beta}^{\gamma}
 \frac{\partial}{\partial s^{\gamma}}~.
$$
 Then by \eqref{vector-s},
 $$
  \hat{C}_{\alpha\beta}^{\gamma}=
 \sum_{\mu,\nu,\lambda}
 {X^{\mu}}_{\alpha}{X^{\nu}}_{\beta} C_{\mu\nu}^{\lambda}
 {(X^{-1})^{\gamma}}_{\lambda}~.
 $$
 Since $X,X^{-1},C_{\mu}\in M_n(\mathbb{C}[t'])$, 
 $\hat{C}_{\alpha\beta}^{\gamma}\in \mathbb{C}[t']=\mathbb{C}[s']$. 
Theorem \ref{cs-polynomial} is proved.

\section{Proof of Theorem \ref{theorem3}}
\label{proof-theorem3}
The notations are the same as 
\S \ref{section:proof-multiplication}.

\begin{lemma}\label{lemma:A-3-2}
$\nabla^{cs}=\nabla$ holds if and only if 
$A$ is an anti-diagonal  constant matrix.
\end{lemma}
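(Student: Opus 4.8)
The plan is to express the condition $\nabla^{cs}=\nabla$ in the $t$-coordinates and show it is equivalent to the vanishing of the extra terms in the formula for $2S_\alpha$ from Lemma~\ref{lemma-S}. Recall that $\nabla$ has connection matrices $\Omega_\alpha$ given by \eqref{omega-mr}, i.e.\ $(\Omega_\alpha)^\gamma{}_\beta=\frac{1-d_\gamma}{d_1}B^\gamma_{\alpha\beta}$, while $\nabla^{cs}$ has connection matrices $S_\alpha$. Using Lemma~\ref{lemma-S},
\begin{equation}\nonumber
2S_\alpha - 2\Omega_\alpha
= A^{-1}\partial_\alpha A + (-I-W+A^{-1}WA)B_\alpha - 2\Omega_\alpha,
\end{equation}
so $\nabla^{cs}=\nabla$ is equivalent to $A^{-1}\partial_\alpha A = \big(I+W+2\Omega_\alpha B_\alpha^{-1}(\cdot) - A^{-1}WA\big)B_\alpha$ after rearranging; more cleanly, I would rewrite $\Omega_\alpha = \frac{1}{d_1}(I-W')B_\alpha$ where $W'=\mathrm{diag}(d_1,\dots,d_n)$, note $\frac{1}{d_1}(I-W')=\frac{1}{d_1}I - W$, hence $2\Omega_\alpha = (\tfrac{2}{d_1}I - 2W)B_\alpha$. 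Wait — more carefully, $\Omega^\gamma_{\alpha\beta}=\frac{1-d_\gamma}{d_1}B^\gamma_{\alpha\beta}$ means $\Omega_\alpha = \Lambda B_\alpha$ with $\Lambda=\mathrm{diag}\big(\frac{1-d_\gamma}{d_1}\big)=\frac{1}{d_1}I - W$. So the condition becomes $A^{-1}\partial_\alpha A = \big(I+W-A^{-1}WA + 2\Lambda\big)B_\alpha = \big(I+W-A^{-1}WA+\tfrac{2}{d_1}I-2W\big)B_\alpha = \big((1+\tfrac{2}{d_1})I - W - A^{-1}WA\big)B_\alpha$. Hmm, the constant should work out to something clean; I would double-check the bookkeeping, but the upshot is that $\nabla^{cs}=\nabla$ is equivalent to a single matrix identity of the form $A^{-1}\partial_\alpha A = N_\alpha B_\alpha$ for all $\alpha$, where the left side is strictly upper triangular (Lemma~\ref{lemma:A-1}(4)) and $N_\alpha$ is built from $W$ and $A^{-1}WA$.

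The heart of the argument is then: \textbf{if $A$ is anti-diagonal constant, both sides vanish}; \textbf{conversely, if the identity holds for all $\alpha$, then $A$ is anti-diagonal constant}. For the forward direction, if $A$ is a constant matrix then $\partial_\alpha A=O$, so the left side is zero; I then need $N_\alpha B_\alpha=O$, equivalently (since $B_\alpha$ is generically invertible on $M^\circ$, as $B_1=U^{-1}$) $N_\alpha=O$ — wait, that's too strong. Let me reconsider: actually when $A$ is anti-diagonal, $A^{-1}WA$ is the diagonal matrix with entries $d_{n+1-\mu}/d_1$ permuted appropriately, and the relevant combination should be designed to vanish. Rather than fight the constant, the cleaner route is: by Lemma~\ref{lemma:A-1}(3) the diagonal of $A^{-1}WA$ is $\frac{d_{n+1-\mu}}{d_1}$, and by (CS3) $d_{n+1-\mu}=d_1+d_n-d_\mu$; combined with $W=\mathrm{diag}(d_\mu/d_1)$ and the parameter $r=\frac{d_n}{2d_1}$ one checks the scalar combination governing the \emph{diagonal} of $N_\alpha$ is identically zero, which is exactly the statement that $\nabla^{cs}$ and $\nabla$ have the same ``Euler'' behavior. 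So the genuine content is in the \emph{off-diagonal} (strictly upper triangular) part.

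Concretely, I would argue as follows. Since $A^{-1}\partial_\alpha A$ is strictly upper triangular, taking the diagonal part of the identity $A^{-1}\partial_\alpha A = N_\alpha B_\alpha$ forces $\mathrm{diag}(N_\alpha B_\alpha)=O$; one verifies the diagonal of $N_\alpha$ already vanishes by (CS3) as above, so this carries no information and the whole identity reduces to its strictly-upper-triangular part. For the converse, set $\alpha=1$: then $\partial_1 A=O$ (Lemma~\ref{lemma:A-1}) gives left side zero, so $N_1 B_1 = N_1 U^{-1}=O$, hence $N_1=O$ since $U$ is invertible on $M^\circ$. But $N_1 = (\text{const})I - W - A^{-1}WA$ has, by Lemma~\ref{lemma:A-1}(3), strictly-upper-triangular off-diagonal part equal to that of $-A^{-1}WA$; so $N_1=O$ forces the off-diagonal part of $A^{-1}WA$ to vanish, i.e.\ $A^{-1}WA$ is diagonal, i.e.\ $WA=AD$ for the diagonal matrix $D=A^{-1}WA$ with entries $d_{n+1-\mu}/d_1$. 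Writing this out entrywise, $\frac{d_\mu}{d_1}A_{\mu\nu} = A_{\mu\nu}\frac{d_{n+1-\nu}}{d_1}$, so $A_{\mu\nu}=0$ unless $d_\mu = d_{n+1-\nu}$, i.e.\ unless $\mu=n+1-\nu$ (using the strict inequalities \eqref{degrees}). Thus $A$ is anti-diagonal. Finally, anti-diagonality plus the degree count $\deg A_{\alpha\beta}=d_1+d_n-d_\alpha-d_\beta$ (Lemma~\ref{lemma:A-1}(1)) gives $\deg A_{\alpha,n+1-\alpha}=0$, so the nonzero anti-diagonal entries are constants; hence $A$ is anti-diagonal constant. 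Running the computation backwards (anti-diagonal constant $\Rightarrow$ $\partial_\alpha A=O$ and $A^{-1}WA$ diagonal $\Rightarrow$ $N_\alpha$ strictly upper triangular part vanishes and diagonal part vanishes by (CS3) $\Rightarrow$ $N_\alpha=O$ $\Rightarrow$ identity holds) gives the forward direction. The main obstacle I anticipate is purely clerical: pinning down the exact scalar constant in $N_\alpha$ and confirming that (CS3) together with $r=\frac{d_n}{2d_1}$ makes its diagonal vanish — this is where a sign or a factor of $d_1$ could slip, so I would carry the computation of $N_\alpha$ symbolically and check it against the already-established formula \eqref{S1} for $S_1$ as a sanity test.
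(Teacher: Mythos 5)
There is a genuine error in your setup, and it is not merely clerical. You identify $\Omega_\alpha$ as the connection matrix of $\nabla$ and $S_\alpha$ as that of $\nabla^{cs}$, but in the paper $\Omega_\alpha$ is the connection matrix of the \emph{bold} connection $\boldsymbol{\nabla}^V$ and $S_\alpha$ is that of the \emph{bold} connection $\boldsymbol{\nabla}^{cs}$ (the Levi--Civita connection of $h$). The lemma is about the dual Saito connections $\nabla$ and $\nabla^{cs}$: since $t$ is a system of $\nabla$-flat coordinates, the connection matrix of $\nabla$ is $O$, and that of $\nabla^{cs}$ is $\Upsilon_\alpha=S_\alpha-S_1C_\alpha$ (eq.~\eqref{upsilon}), not $S_\alpha$. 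Your condition $S_\alpha=\Omega_\alpha$ is instead the condition $\boldsymbol{\nabla}^{cs}=\boldsymbol{\nabla}^V$, which the paper notes holds only when $d_n=2$. Concretely, the diagonal of your $N_\alpha=(1+\tfrac{2}{d_1})I-W-A^{-1}WA$ is $\tfrac{2-d_n}{d_1}$, which does \emph{not} vanish by (CS3) as you assert; your own sanity check at $\alpha=1$ would force $d_n=2$, so carried through honestly your argument would conclude that $\nabla^{cs}=\nabla$ implies $G$ is a Coxeter group, contradicting Theorem~\ref{theorem2} (e.g.\ $G_4$). The missing ingredient is the duality relation \eqref{CS-dual2}: the difference $\nabla^{cs}-\nabla$ picks up the extra term $\tfrac{d_n-2}{2d_1}B_\alpha$ relative to $S_\alpha-\Omega_\alpha$, and this is exactly what cancels the spurious $\tfrac{2-d_n}{d_1}$ on the diagonal.

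Once the condition is set up correctly the lemma is essentially immediate, and this is the paper's route: $\nabla^{cs}=\nabla$ iff $\Upsilon_\alpha=\tfrac12A^{-1}\partial_\alpha A=O$ for all $\alpha$, iff $A$ is constant; and by Lemma~\ref{lemma:A-1}(1) a constant $A$ must be anti-diagonal (the entries with $\alpha+\beta>n+1$ are homogeneous of positive degree, hence vanish if constant) while an anti-diagonal $A$ is automatically constant (its nonzero entries have degree $0$). Your endgame --- deducing anti-diagonality from $A^{-1}WA$ being diagonal together with the strict inequalities \eqref{degrees}, then constancy from the degree count --- is correct linear algebra and would survive in a corrected version, but it is an unnecessary detour: the degree argument alone suffices, and the $A^{-1}WA$ analysis only enters because your reduction retained terms that the correct reduction eliminates.
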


\begin{proof}
The condition $\nabla^{cs}=\nabla$  is equivalent to  
$\Upsilon_{\alpha}=O$ ($1\leq \alpha\leq n$). 
Therefore by \eqref{upsilon} 
$\nabla^{cs}=\nabla$
holds if and only if $A$ is a constant matrix.
By Lemma \ref{lemma:A-1},
$A$ is a constant matrix if and only if 
it is anti-diagonal.
\end{proof}

\begin{remark}
In the case $n=2$,  
$A$ is given by
$$
A=\frac{d_2-1}{d_1}\begin{pmatrix}0&1\\
1&C_{22}^2 
\end{pmatrix}~.
$$ 
Therefore by Lemma \ref{lemma:A-3-2}, 
$A$ is an anti-diagonal  if and only if 
$C_{22}^2=0$.
It is not difficult to compute $C_{22}^2$. 
See \cite[\S 5]{Arsie-Lorenzoni2016}, \cite[Tables C6, C7 ,C8]{KMS2018}.
Among exceptional Shephard groups of rank $2$, 
$C_{22}^2\neq 0$
holds only for
$G_4$, $G_5$, 
$G_8$, $G_{16}$, $G_{20}$.
This result agrees with Theorem \ref{theorem2}
proved by the calculation using the $u$-coordinates.
\end{remark}

Next we consider the metric $\eta$ defined by
\eqref{CS-dual-2}.
\begin{lemma}\label{lemma:A-3}
The matrix 
$A$ defined in \eqref{def-A} is the representation matrix of the metric $\eta$ with respect to 
$\partial_{\alpha}$ $(1\leq \alpha\leq n)$: $$\eta(\partial_{\alpha},\partial_{\beta})=A_{\alpha\beta}\quad (1\leq \alpha,\beta\leq n).$$
\end{lemma}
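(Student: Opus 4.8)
The plan is to unwind the definition of $\eta$ from \eqref{CS-dual-2} and match it against the definition of $A$ from \eqref{def-A}, using the matrix dictionary already built in \S\ref{matrix-representation}. Recall $\eta(X,Y)=h(X,E\circledast Y)$, and by Corollary \ref{cor} (or directly, since $\circledast=\ast$ by the proof of Theorem \ref{cs-polynomial}) the representation matrix of $\partial_\beta\circledast$ is $C_\beta$, so the representation matrix of $E\circledast$ is $U$ exactly as in \eqref{U-t}. Hence
\begin{equation}\nonumber
\eta(\partial_\alpha,\partial_\beta)=h\Bigl(\partial_\alpha,\sum_{\gamma=1}^n {U^\gamma}_\beta\,\partial_\gamma\Bigr)=\sum_{\gamma=1}^n H_{\alpha\gamma}{U^\gamma}_\beta=(HU)_{\alpha\beta}~.
\end{equation}
By Lemma \ref{lemma:A-0}, $A=HU$, so $\eta(\partial_\alpha,\partial_\beta)=A_{\alpha\beta}$, which is the claim.

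First I would state clearly that $\circledast=\ast$, citing the identification made in the proof of Theorem \ref{cs-polynomial} (comparing the first equations of \eqref{duality-dual} and \eqref{CS-dual}), so that the matrix of $E\circledast$ with respect to $\partial_1,\dots,\partial_n$ is the matrix $U$ of \eqref{U-t}. Then I would write out $\eta(\partial_\alpha,\partial_\beta)=h(\partial_\alpha,E\circledast\partial_\beta)$ and expand $E\circledast\partial_\beta$ in the basis $\partial_\gamma$ to get $(HU)_{\alpha\beta}$. Finally I would invoke $A=HU$ from Lemma \ref{lemma:A-0} to conclude.

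There is essentially no obstacle here: the lemma is a bookkeeping identity that follows immediately once the matrix dictionary and Lemma \ref{lemma:A-0} are in place. The one point to be careful about is that \eqref{CS-dual-2} uses $\circledast$ while Lemma \ref{lemma:A-0} and \eqref{U-t} are phrased in terms of $\ast$ and $C_\alpha$; so the only thing to verify is that the two multiplications coincide, which has already been established. One should also note that $A$ is symmetric (Lemma \ref{lemma:A-2}), consistent with $\eta$ being a metric, though this is not strictly needed for the statement as phrased.
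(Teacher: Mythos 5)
Your proof is correct and follows exactly the same route as the paper: substitute $\circledast=\ast$ into \eqref{CS-dual-2}, expand $E\ast\partial_\beta$ via $U$ to get $\eta(\partial_\alpha,\partial_\beta)=(HU)_{\alpha\beta}$, and conclude with $A=HU$ from Lemma \ref{lemma:A-0}. No gaps.
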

\begin{proof}  Substituting $\circledast=\ast$,
$X=\partial_{\alpha}$
and $Y=\partial_{\beta}$ into \eqref{CS-dual-2},
we obtain
$$
\eta(\partial_{\alpha},\partial_{\beta})=
h(\partial_{\alpha},E\ast\partial_{\beta})
=h\left(\partial_{\alpha},
\sum_{\gamma=1}^n {U^{\gamma}}_{\beta}\partial_{\gamma}\right)=
(HU)_{\alpha\beta}~.
$$
Since $A=HU$ (see Lemma \ref{lemma:A-0}),  
$\eta(\partial_{\alpha},\partial_{\beta})=A_{\alpha\beta}$.
\end{proof}

Now we prove Theorem \ref{theorem3}.
Assume that 
$\theta$ is a metric on $M$
compatible with the natural Saito structure $(\nabla,\ast,E)$.
Then $\theta$ must satisfy \eqref{f1}, \eqref{f2}, \eqref{f3}
(with $\eta$ replaced by $\theta$).
Let $\Theta$ be the representation matrix of $\theta$
with respect to $\partial_{\alpha}$ $(1\leq \alpha\leq n)$, i.e. 
$$
\Theta_{\alpha\beta}=\theta(\partial_{\alpha},\partial_{\beta})~.
$$
Then \eqref{f1} is equivalent to 
$$\partial_{\alpha} \Theta_{\beta\gamma}=0 \quad 
 (1\leq \alpha,\beta,\gamma\leq n)~.
$$
Therefore $\Theta$ must be a constant matrix.
 
Eq.\eqref{f3} is equivalent to 
\begin{equation}\nonumber
\frac{d_{\alpha}+d_{\beta}}{d_1}\Theta_{\alpha\beta}
=(2-D)\Theta_{\alpha\beta}
\quad (1\leq \alpha,\beta\leq n)~.
\end{equation} 
Since $\Theta\neq O$, it follows that 
$2-D=(d_{\alpha}+d_{\beta})/d_1$ must holds for 
some $(\alpha,\beta)$.
Let us show that 
\begin{equation}\nonumber
2-D=\frac{d_1+d_n}{d_1}~.
\end{equation}
First assume that 
$
d_1(2-D)>d_1+d_n
$
holds. Then by \eqref{cs-degrees}, 
$$
\frac{d_{\alpha}+d_{\beta}}{d_1}\leq 
\frac{d_1+d_n}{d_1}<2-D \quad (\alpha+\beta\geq n+1)~.
$$
So $\Theta_{\alpha\beta}=0$ must hold for $\alpha+\beta\geq n+1$, which implies $\det \Theta=0$. 
This contradicts the nondegeneracy of the metric $\theta$. 
Therefore $d_1(2-D)\leq d_1+d_n$. 
By a similar argument,  we can show $d_1(2-D)\geq d_1+d_n$. 
Thus we have
\begin{equation}\label{theta}
2-D=\frac{d_1+d_n}{d_1}~,\quad 
\Theta_{\alpha\beta}=0 \quad (\alpha+\beta\neq n+1)~,\quad 
\Theta_{\alpha,n+1-\alpha}\in \mathbb{C}\setminus \{0\}~.
\end{equation}

Finally, the condition \eqref{f2} implies
$\theta(\partial_{\alpha},\partial_{\beta})=
\theta(\partial_{\alpha}\ast \partial_{\beta},\partial_1)$. So
$$
\Theta_{\alpha\beta}=
\sum_{\gamma=1}^n
C_{\alpha\beta}^{\gamma} \Theta_{\gamma,1}
\stackrel{\eqref{theta}}{=}C_{\alpha\beta}^n \Theta_{n,1}
\stackrel{\mathrm{Lemma\, \ref{lemma:A-0}}}{=}
\frac{d_1 \Theta_{n,1}}{d_n-1} A_{\alpha\beta}~. 
$$
Since $\Theta$ is anti-diagonal, 
this equation implies that 
$A$ must be  an anti-diagonal matrix.
Hence by by Lemma \ref{lemma:A-3-2},
$\nabla=\nabla^{cs}$ must hold. 
Moreover  
$\theta$ must be a constant multiple of $\eta$
since $A$ is the representation matrix of $\eta$ 
(Lemma \ref{lemma:A-3}).

The proof of  the converse is immediate. 
If we assume that $\nabla=\nabla^{cs}$, 
then it is clear that the constant multiple of $\eta$
is compatible with $(\nabla^{cs},\circledast=\ast,E)$
since $\eta$ is a metric compatible with 
$(\nabla^{cs},\circledast=\ast,E)$.
(See the last paragraph in \S \ref{section:CS-groups}.)
This finishes the proof of Theorem \ref{theorem3}.
\appendix
\section{Vector field $e$ for $G(m,1,n)$}\label{appendix1}

\subsection{Preliminary}
For a tuple of variables $v=(v^1,\ldots, v^n)$,
let
$$
{\bf e}_{\alpha}(v)=\sum_{1\leq i_1<i_2<\ldots<i_{\alpha}\leq n}
v^{i_1}v^{i_2}\cdots v^{i_{\alpha}}~
$$
be the $\alpha$-th elementary symmetric polynomial in  $v$.
We use the notation $$
v_{(i,j,\ldots,k)}=v\setminus \{v^i,v^j,\ldots, v^k\}~.
$$
Notice that 
\begin{equation}\label{elementary}
{\bf e}_{\alpha}(v_{(i,j,\ldots,k)})=
v^l {\bf e}_{\alpha-1}(v_{(i,j,\ldots,k,l)})
+{\bf e}_{\alpha}(v_{(i,j,\ldots,k,l)}) 
\quad (l\neq i,j,\ldots,k)~.
\end{equation}

We set 
$$
\mathbb{E}(v)=
\mathbb{E}(v^1,\ldots,v^n)=\begin{pmatrix}
1&\ldots&1\\
{\bf e}_{1}(v_{(1)})&\ldots& {\bf }{\bf e}_1(v_{(n)})\\
\vdots&&\vdots\\
{\bf e}_{n-1}(v_{(1)})&\ldots&{\bf }{\bf e}_{n-1}(v_{(n)})
\end{pmatrix}~.
$$

\begin{lemma}\label{lem:detE}
The determinant of $\mathbb{E}(v)$ is given by
$$
|\mathbb{E}(v)|=\prod_{1\leq k<l\leq n}(v^k-v^l)~.$$
\end{lemma}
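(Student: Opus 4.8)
The plan is to prove the identity by induction on $n$, using the recursion \eqref{elementary} to reduce the $n\times n$ determinant to the $(n-1)\times(n-1)$ one. The base case $n=1$ is immediate: $\mathbb{E}(v^1)=(1)$ and the right-hand side is the empty product $1$.

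For the inductive step I would apply the column operations $C_j\mapsto C_j-C_n$ for $j=1,\ldots,n-1$, which do not change $|\mathbb{E}(v)|$. Writing the $(i+1,j)$ entry of $\mathbb{E}(v)$ as ${\bf e}_i(v_{(j)})$ (so that $i$, the order of the elementary symmetric polynomial, runs from $0$ to $n-1$), an application of \eqref{elementary} with $l=n$ to both ${\bf e}_i(v_{(j)})$ and ${\bf e}_i(v_{(n)})$ makes the common term ${\bf e}_i(v_{(j,n)})$ cancel, leaving
\[
{\bf e}_i(v_{(j)})-{\bf e}_i(v_{(n)})=(v^n-v^j)\,{\bf e}_{i-1}(v_{(j,n)})\qquad(1\le j\le n-1).
\]
In particular the first row of the transformed matrix is $(0,\ldots,0,1)$, since ${\bf e}_{-1}=0$ and ${\bf e}_0(v_{(n)})=1$. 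Expanding the determinant along this first row isolates the $(1,n)$ cofactor; pulling the factor $(v^n-v^j)$ out of the $j$-th column of the resulting $(n-1)\times(n-1)$ minor leaves exactly the matrix $\mathbb{E}(v^1,\ldots,v^{n-1})$, whose rows are ${\bf e}_0,\ldots,{\bf e}_{n-2}$ of the variables with one entry deleted. By the induction hypothesis this minor has determinant $\prod_{1\le k<l\le n-1}(v^k-v^l)$, and combining this with the cofactor sign $(-1)^{1+n}$ and the relation $\prod_{j=1}^{n-1}(v^n-v^j)=(-1)^{n-1}\prod_{j=1}^{n-1}(v^j-v^n)$ gives $|\mathbb{E}(v)|=\prod_{1\le k<l\le n}(v^k-v^l)$, completing the induction.

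A non-inductive alternative is available should it read more cleanly: the polynomial identity $\prod_{l\ne j}(t-v^l)=\sum_{i=0}^{n-1}(-1)^i{\bf e}_i(v_{(j)})\,t^{n-1-i}$, evaluated at $t=v^1,\ldots,v^n$, shows that $W\,\mathbb{E}(v)$ equals the diagonal matrix $\mathrm{diag}\!\big(\prod_{l\ne k}(v^k-v^l)\big)_{k=1}^n$, where $W$ is the signed reversed Vandermonde matrix $W_{ki}=(-1)^i(v^k)^{n-1-i}$; taking determinants and dividing by the (nonzero, on the relevant open set) factor $\det W$ yields the formula at once. In both routes the only real point requiring care is the bookkeeping of signs — the factor $(-1)^{\binom n2}$ shows up repeatedly in reversing column orders and in pairing $(v^k-v^l)$ with $(v^l-v^k)$ — but no step poses a genuine obstacle.
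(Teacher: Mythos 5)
Your proof is correct and follows essentially the same route as the paper: column operations combined with the recursion \eqref{elementary} and induction on $n$; the only difference is that you pivot on the last column (so the sign $(-1)^{1+n}$ and the reversal $\prod_j(v^n-v^j)=(-1)^{n-1}\prod_j(v^j-v^n)$ must cancel), whereas the paper subtracts the \emph{first} column and gets the factors $(v^1-v^k)$ directly with no sign bookkeeping. One trivial wording slip: the subtraction for ${\bf e}_i(v_{(n)})$ uses \eqref{elementary} with $l=j$, not $l=n$; the displayed identity ${\bf e}_i(v_{(j)})-{\bf e}_i(v_{(n)})=(v^n-v^j)\,{\bf e}_{i-1}(v_{(j,n)})$ is nevertheless correct.
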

\begin{proof}

Subtracting the first column from 
the $k$-th column ($k>1$) and  using \eqref{elementary}, we have
\begin{equation}\nonumber
\begin{split}
|\mathbb{E}(v^1,\ldots,v^n) |
&=
\left|
\begin{array}{cccc}
1&0&\cdots&0\\
{\bf e}_1(v_{(1)})&v^1-v^2&\cdots&v^1-v^n\\
{\bf e}_2(v_{(2)})&(v^1-v^2){\bf e}_1(v_{(1,2)})&\cdots&(v^1-v^n){\bf e}_1(v_{(1,n)})\\
\vdots&\vdots&&\vdots\\
{\bf e}_n(v_{(n)})&(v^1-v^2){\bf e}_{n-1}(v_{(1,2)})&\cdots&(v^1-v^n){\bf e}_{n-1}(v_{(1,n)})\\
\end{array}
\right|\\
&=\prod_{2\leq k \leq n}(v^1-v^k)\cdot
\left| \mathbb{E}(v^2,\ldots,v^n) \right|. 
\end{split}
\end{equation}
The claim follows by induction.
\end{proof}

Next we calculate the $(\alpha,j)$ minor of $\mathbb{E}(v)$.
Let $\mathbb{E}(v)_{\alpha,j}$ be the matrix
obtained by deleting the $\alpha$-th row and the $j$-th column of 
$\mathbb{E}(v)$. 
\begin{lemma} \label{minor}
The determinant of $\mathbb{E}(v)_{\alpha,j}$ is given by
\begin{equation}\nonumber
|\mathbb{E}(v)_{\alpha,j}|=(v^j)^{n-\alpha}
\prod_{\begin{subarray}{c}1\leq k<l\leq n;\\
k,l\neq j
\end{subarray}} (v^k-v^l)~.
\end{equation}
\end{lemma}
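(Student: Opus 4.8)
The plan is to read the columns of $\mathbb{E}(v)$ as coefficient vectors of polynomials: the $i$-th column of $\mathbb{E}(v)$ is the coefficient vector of
\[
P_i(t)=\prod_{k\neq i}(t+v^k)=\sum_{\beta=0}^{n-1}{\bf e}_{\beta}(v_{(i)})\,t^{\,n-1-\beta},
\]
listed from the leading coefficient down to the constant term, so the $\alpha$-th row of $\mathbb{E}(v)$ records the coefficient of $t^{\,n-\alpha}$ in $P_i$.

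The key observation is that for every $i\neq j$ the factor $(t+v^j)$ divides $P_i$, so $P_i(t)=(t+v^j)\,Q_i(t)$ with $Q_i(t)=\prod_{k\neq i,\,j}(t+v^k)$ of degree $n-2$. Hence the $n\times(n-1)$ matrix whose columns are the coefficient vectors of $\{P_i\}_{i\neq j}$ factors as $L\,M$, where $M$ is the $(n-1)\times(n-1)$ matrix whose columns are the coefficient vectors of $\{Q_i\}_{i\neq j}$ and $L$ is the $n\times(n-1)$ matrix implementing ``multiplication by $(t+v^j)$'', i.e. the bidiagonal matrix with $1$'s on the diagonal and $v^j$'s on the subdiagonal. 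Now $M$ is exactly $\mathbb{E}(v_{(j)})$, so Lemma \ref{lem:detE} gives $\det M=\prod_{1\le k<l\le n,\ k,l\neq j}(v^k-v^l)$. Deleting the $\alpha$-th row of the product $LM$ yields $\mathbb{E}(v)_{\alpha,j}=\widehat L\,M$, where $\widehat L$ is $L$ with its $\alpha$-th row removed, and therefore
\[
\det \mathbb{E}(v)_{\alpha,j}=\det\widehat L\cdot\prod_{1\le k<l\le n,\ k,l\neq j}(v^k-v^l).
\]

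It then remains to compute $\det\widehat L$. Removing row $\alpha$ disconnects $\widehat L$ into a direct sum: its surviving first $\alpha-1$ rows have support in the first $\alpha-1$ columns and form a lower-unitriangular $(\alpha-1)\times(\alpha-1)$ block, while its remaining $n-\alpha$ rows have support in the last $n-\alpha$ columns and form an upper-triangular $(n-\alpha)\times(n-\alpha)$ block with every diagonal entry equal to $v^j$. Hence $\det\widehat L=(v^j)^{\,n-\alpha}$, which gives the asserted formula.

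The only thing requiring care is the bookkeeping: matching row and column orders so that the factorization $\mathbb{E}(v)_{\alpha,j}=\widehat L M$ holds literally with no stray sign, and pinning down the precise shape of $L$ from the degree conventions used in defining $\mathbb{E}(v)$. I do not expect any real difficulty, as the lemma is a formal consequence of Lemma \ref{lem:detE} and the divisibility $(t+v^j)\mid P_i$ for $i\neq j$. (One could instead argue by induction on $n$ as in the proof of Lemma \ref{lem:detE}, subtracting a suitable column and using \eqref{elementary}, but the factorization argument seems cleaner.)
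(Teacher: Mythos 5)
Your proof is correct, but it takes a genuinely different route from the paper's. The paper proves the lemma by brute force: it reduces to $j=1$, handles $\alpha=1$ by a chain of column subtractions using \eqref{elementary} followed by an explicit elimination of the subdiagonal of an auxiliary matrix to produce the factor $(v^1)^{n-1}$, and then reduces $\alpha>1$ to $\alpha=1$ by further column operations. Your argument instead interprets the $i$-th column of $\mathbb{E}(v)$ as the coefficient vector of $P_i(t)=\prod_{k\neq i}(t+v^k)$, uses the divisibility $(t+v^j)\mid P_i$ for $i\neq j$ to factor the matrix with column $j$ deleted as $LM$ with $M=\mathbb{E}(v_{(j)})$ and $L$ the bidiagonal ``multiplication by $t+v^j$'' matrix, and then gets the minor as $\det\widehat L\cdot\det M$, with $\det M$ supplied by Lemma \ref{lem:detE} and $\det\widehat L=(v^j)^{n-\alpha}$ from the block structure of $L$ with row $\alpha$ removed. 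I checked the bookkeeping you flagged: the entries of $L$ are $L_{a,a}=1$, $L_{a,a-1}=v^j$, the factorization holds entrywise with no sign, the first $\alpha-1$ surviving rows of $\widehat L$ form a lower-unitriangular block and the last $n-\alpha$ an upper-triangular block with diagonal $v^j$, so everything goes through. Your approach is structurally cleaner and makes the appearance of both factors in the formula transparent, at the cost of introducing the polynomial interpretation; the paper's approach is self-contained elementary row/column manipulation in the same style as its proof of Lemma \ref{lem:detE}, at the cost of a longer computation.
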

\begin{proof}
We may assume $j=1$, since
\begin{equation}\nonumber
\left| {\mathbb E}(v^1,v^2,\ldots,v^n)_{\alpha,j} \right|
=(-1)^{j-2}
\left| {\mathbb E}(v^j,v^2,\ldots,v^{j-1},v^1,v^{j+1},v^n)_{\alpha,1} \right|.
\end{equation}
First we consider the case $\alpha=1$.
Subtract the first column of  $\mathbb{E}(v)_{1,1}$  from the $k$-th column ($k>1$).
Then subtract the second column from the $k$-th column ($k>2$).
Continuing this process, we have 
\begin{equation}\nonumber
\begin{split}
|\mathbb{E}(v)_{1,1} |
&=
\left|
\begin{array}{cccc}
{\bf e}_1(v_{(2)})&{\bf e}_1(v_{(3)})&\cdots&{\bf e}_1(v_{(n)})\\
{\bf e}_2(v_{(2)})&{\bf e}_2(v_{(3)})&\cdots&{\bf e}_2(v_{(n)})\\
\vdots&\vdots&\ddots&\vdots\\
{\bf e}_{n-1}(v_{(2)})&{\bf e}_{n-1}(v_{(3)})&\cdots&{\bf e}_{n-1}(v_{(n)})
\end{array}
\right|\\
&=\prod_{3\leq k \leq n}(v^2-v^k)
\left|
\begin{array}{cccc}
{\bf e}_1(v_{(2)})&1&\cdots&1\\
{\bf e}_2(v_{(2)})&{\bf e}_1(v_{(2,3)})&\cdots&{\bf e}_1(v_{(2,n)})\\
\vdots&\vdots&\ddots&\vdots\\
{\bf e}_{n-1}(v_{(2)})&{\bf e}_{n-2}(v_{(2,3)})&\cdots&{\bf e}_{n-2}(v_{(2,n)})
\end{array}
\right|\\
&=
\prod_{3\leq k \leq n}(v^2-v^k)
\prod_{4\leq k \leq n}(v^3-v^k)
\left|
\begin{array}{ccccc}
{\bf e}_1(v_{(2)})&1&0&\cdots&0\\
{\bf e}_2(v_{(2)})&{\bf e}_1(v_{(2,3)})&1&\cdots&1\\
\vdots&\vdots&\ddots&\vdots\\
{\bf e}_{n-1}(v_{(2)})&{\bf e}_{n-2}(v_{(2,3)})&{\bf e}_{n-3}(v_{(2,3,4)})&\cdots&{\bf e}_{n-3}(v_{(2,3,n)})
\end{array}
\right|\\
&\vdots\\
&=\prod_{2\leq i<j\leq n} (v^i-v^j)\cdot
\underbrace{
\left|
\begin{array}{cccccc}
{\bf e}_1(v_{(2)})&1&0&\cdots&0&0\\
{\bf e}_2(v_{(2)})&{\bf e}_1(v_{(2,3)})&1&\cdots&0&0\\
{\bf e}_3(v_{(2)})&{\bf e}_2(v_{(2,3)})&{\bf e}_1(v_{(2,3,4)})&\cdots&0&0\\
\vdots&\vdots&\vdots&\ddots&\vdots&\vdots\\
{\bf e}_{n-2}(v_{(2)})&{\bf e}_{n-3}(v_{(2,3)})&{\bf e}_{n-4}(v_{(2,3,4)})&\cdots&{\bf e}_1(v_{(2,\ldots,n-1)})&1\\
{\bf e}_{n-1}(v_{(2)})&{\bf e}_{n-2}(v_{(2,3)})&{\bf e}_{n-3}(v_{(2,3,4)})&\cdots&{\bf e}_2(v_{(2,\ldots,n-1)})&{\bf e}_1(v_{(2,\ldots,n)})
\end{array}
\right|
}_{(\spadesuit)}
~.
\end{split}
\end{equation}
Next we eliminate the entries  below the diagonal of $(\spadesuit)$ by column operations. 
We start from the bottom row. For $k<n-1$, 
eliminate the $(n-1,k)$ entry
${\bf e}_{n-k}(v_{(2,\ldots,k+1)})=v^1v^{k+2}\cdots v^n$
using the $(n-1, n-1)$ entry
${\bf e}_1(v_{(2,\ldots,n)})=v^1$. Then $(n-2, n-2)$ entry becomes 
${\bf e}_1(v_{(2,\ldots,n-1)})-v_n=v_1$ and all the $(n-2, k)$ entries for $k<n-2$
are divisible by $v_1$. Repeating the elimination process, we obtain
\begin{equation}\nonumber
(\spadesuit)=\left|
\begin{array}{cccc}
v^1&1&&O \\
&\ddots&\ddots&\\
&&v^1&1\\
O&&&v^1
\end{array}
\right|
=(v^1)^{n-1}~.
\end{equation} 
This proves the formula for $\alpha=1$.
The cases $\alpha>1$ may be reduced to the case $\alpha=1$ as follows.
By column operations similar to those in Lemma \ref{lem:detE}, we have
\begin{equation}\nonumber
\begin{split}
|\mathbb{E}(v)_{\alpha,1} |
&=
\prod_{2\leq i \leq \alpha}
\left(\prod_{i<j\leq n}
(v^i-v^j)\right)\cdot
\left|
\begin{array}{ccc}
{\bf e}_1(v_{(2,\cdots,\alpha,\alpha+1)}) &\cdots &{\bf e}_1(v_{(2,\cdots,\alpha,n)})\\
\vdots&\ddots&\vdots\\
{\bf e}_{n-\alpha}(v_{(2,\cdots,\alpha,\alpha+1)}) &\cdots &{\bf e}_{n-\alpha}(v_{(2,\cdots,\alpha,n)})
\end{array}
\right|\\
&=\prod_{2\leq i \leq \alpha}
\left(\prod_{i<j\leq n}
(v^i-v^j)\right)
\cdot
\left|\mathbb{E}(v^1,v^{\alpha+1},\ldots,v^{n})_{1,1}\right|~.
\end{split}
\end{equation}
Thus the formula for $\alpha>1$ follows from that for $\alpha=1$. 
 \end{proof}

From the above two lemmas,
we obtain the following 
\begin{lemma}\label{inverse}
 The $(i,\alpha)$ entry of $\mathbb{E}(v)^{-1}$ is given by
$$
(\mathbb{E}(v)^{-1})_{i\alpha}=
(-1)^{\alpha+1} (v^i)^{n-{\alpha}}
\prod_{\begin{subarray}{c}1\leq l\leq n;\\
l\neq i\end{subarray}
}
(v_i-v_l)^{-1}~.
$$
\end{lemma}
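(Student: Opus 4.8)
The plan is to read off the entries of $\mathbb{E}(v)^{-1}$ from the cofactor (adjugate) formula, feeding in the two determinant computations already established. Recall that for an invertible $n\times n$ matrix one has $(\mathbb{E}(v)^{-1})_{i\alpha}=(-1)^{i+\alpha}\,|\mathbb{E}(v)_{\alpha,i}|/|\mathbb{E}(v)|$, where $\mathbb{E}(v)_{\alpha,i}$ is the matrix obtained by deleting the $\alpha$-th row and the $i$-th column. So the first step is simply to substitute Lemma \ref{lem:detE} for $|\mathbb{E}(v)|$ and Lemma \ref{minor} (with $j=i$) for $|\mathbb{E}(v)_{\alpha,i}|$ and to form the quotient.

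Carrying this out, the only factors of $\prod_{1\le k<l\le n}(v^k-v^l)$ that do not already occur in $\prod_{1\le k<l\le n;\,k,l\ne i}(v^k-v^l)$ are precisely those in which the index $i$ appears, namely the factors $(v^k-v^i)$ with $k<i$ together with the factors $(v^i-v^l)$ with $l>i$. Hence
$$
\frac{|\mathbb{E}(v)_{\alpha,i}|}{|\mathbb{E}(v)|}
=(v^i)^{n-\alpha}\Bigl(\prod_{k<i}(v^k-v^i)\prod_{l>i}(v^i-v^l)\Bigr)^{-1}
=(-1)^{i-1}(v^i)^{n-\alpha}\prod_{\substack{1\le l\le n\\ l\ne i}}(v^i-v^l)^{-1},
$$
where the last equality uses $\prod_{k<i}(v^k-v^i)=(-1)^{i-1}\prod_{k<i}(v^i-v^k)$. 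Multiplying by the cofactor sign $(-1)^{i+\alpha}$ then collapses the two powers of $-1$ carrying the index $i$, since $(-1)^{i+\alpha}(-1)^{i-1}=(-1)^{2i+\alpha-1}=(-1)^{\alpha+1}$, and one obtains $(\mathbb{E}(v)^{-1})_{i\alpha}=(-1)^{\alpha+1}(v^i)^{n-\alpha}\prod_{l\ne i}(v^i-v^l)^{-1}$, which is the asserted formula (written with $v_i,v_l$ in place of $v^i,v^l$).

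The main point requiring care — though it is pure bookkeeping rather than a genuine difficulty — is keeping the three sources of signs consistent: the cofactor sign $(-1)^{i+\alpha}$; the sign $(-1)^{i-1}$ that appears when the factors $(v^k-v^i)$ with $k<i$ are reversed; and the normalization in Lemma \ref{minor}, which is proved for $j=1$ and extended to general $j$ by the column-transposition sign $(-1)^{j-2}$. I would verify once that this column-permutation normalization is the one induced by deleting the $i$-th column of $\mathbb{E}(v)$; after that check, the two $(-1)^{i}$ contributions cancel and the resulting exponent depends only on $\alpha$, as claimed. No other ingredients are needed beyond Lemmas \ref{lem:detE} and \ref{minor}.
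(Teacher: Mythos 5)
Your proposal is correct and follows exactly the paper's route: the paper's proof likewise consists of substituting Lemmas \ref{lem:detE} and \ref{minor} into the cofactor formula $(\mathbb{E}(v)^{-1})_{i\alpha}=(-1)^{i+\alpha}\,|\mathbb{E}(v)_{\alpha,i}|/|\mathbb{E}(v)|$. The only difference is that the paper leaves the sign bookkeeping implicit, whereas you carry it out explicitly (and correctly): $(-1)^{i+\alpha}(-1)^{i-1}=(-1)^{\alpha+1}$.
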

\begin{proof}
Substitute the formulas in Lemmas \ref{lem:detE} and \ref{minor}
into 
$$
(\mathbb{E}(v)^{-1})_{i\alpha}=
\dfrac{(-1)^{i+\alpha}|\mathbb{E}(v)_{\alpha,i}|}{|\mathbb{E}(v)|}~.
$$
\end{proof}

\subsection{The vector field $e$ for $G(m,1,n)$}
If we set 
$$
v^i=(u^i)^m \quad (1\leq i\leq n)~,
$$
then
a set of basic invariants for $G(m,1,n)$ 
is given by
$$
x^{\alpha}={\bf e}_{n+1-\alpha}(v)\quad 
(1\leq \alpha\leq n)~.
$$
\begin{proposition}\label{du/dx}
We have
$$
\frac{\partial u^i}{\partial x^{\alpha}}
=\frac{(-1)^{n+\alpha}
(u^i)^{m(\alpha-2)+1}
}{m}
\prod_{\begin{subarray}{c}
1\leq l\leq n;\\
l\neq i
\end{subarray}}
(v_i-v_l)^{-1}~.
$$
\end{proposition}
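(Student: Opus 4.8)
The plan is to compute the Jacobian of the covering map $u\mapsto x$ by factoring it through the intermediate variables $v=(v^1,\ldots,v^n)$, and then to read off the inverse Jacobian using Lemma \ref{inverse}.

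First I would apply the chain rule. Since $v^j=(u^j)^m$ we have $\partial v^j/\partial u^i=m(u^i)^{m-1}\delta_{ij}$, and the identity $\partial\,{\bf e}_{\beta}(v)/\partial v^i={\bf e}_{\beta-1}(v_{(i)})$ follows by differentiating \eqref{elementary} (taken with no deleted variables and $l=i$) with respect to $v^i$. Hence
$$
\frac{\partial x^{\alpha}}{\partial u^i}
=\sum_{j=1}^{n}\frac{\partial\,{\bf e}_{n+1-\alpha}(v)}{\partial v^j}\,\frac{\partial v^j}{\partial u^i}
=m(u^i)^{m-1}\,{\bf e}_{n-\alpha}(v_{(i)}).
$$
By the definition of $\mathbb{E}(v)$, whose $(k,i)$ entry is ${\bf e}_{k-1}(v_{(i)})$, the quantity ${\bf e}_{n-\alpha}(v_{(i)})$ is exactly the $(n+1-\alpha,i)$ entry of $\mathbb{E}(v)$.

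Next I would pass to matrix notation. Let $J$ be the matrix with $(\alpha,i)$ entry $\partial x^{\alpha}/\partial u^i$, let $D=\mathrm{diag}\bigl(m(u^1)^{m-1},\ldots,m(u^n)^{m-1}\bigr)$, and let $T$ be the anti-diagonal permutation matrix with $T_{\alpha k}=\delta_{\alpha,\,n+1-k}$, so that $T^{-1}=T$. The computation above reads $J=T\,\mathbb{E}(v)\,D$. On the complement of the reflection hyperplanes of $G(m,1,n)$ the $v^i$ are nonzero and pairwise distinct, so $D$ and (by Lemma \ref{lem:detE}) $\mathbb{E}(v)$ are invertible, and there $\partial u^i/\partial x^{\alpha}$ is the $(i,\alpha)$ entry of $J^{-1}=D^{-1}\,\mathbb{E}(v)^{-1}\,T$; that is,
$$
\frac{\partial u^i}{\partial x^{\alpha}}=\frac{1}{m(u^i)^{m-1}}\bigl(\mathbb{E}(v)^{-1}\bigr)_{i,\,n+1-\alpha}.
$$

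Finally I would substitute Lemma \ref{inverse} with the column index $n+1-\alpha$ in place of $\alpha$, which yields $\bigl(\mathbb{E}(v)^{-1}\bigr)_{i,\,n+1-\alpha}=(-1)^{n+2-\alpha}(v^i)^{\alpha-1}\prod_{l\neq i}(v_i-v_l)^{-1}$. Since $(-1)^{n+2-\alpha}=(-1)^{n+\alpha}$ and $(v^i)^{\alpha-1}/(u^i)^{m-1}=(u^i)^{m(\alpha-1)-(m-1)}=(u^i)^{m(\alpha-2)+1}$, this is precisely the asserted formula. The whole argument is routine once Lemmas \ref{lem:detE}--\ref{inverse} are in hand; the only place that demands attention is the row reversal encoded by $T$, which appears because $x^{\alpha}$ is the elementary symmetric polynomial of degree $n+1-\alpha$, not $\alpha$.
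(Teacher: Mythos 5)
Your proof is correct and follows essentially the same route as the paper's: chain rule through $v^j=(u^j)^m$ to factor the Jacobian as (a row reversal of) $m\,\mathbb{E}(v)$ times a diagonal matrix, inversion, and then substitution of Lemma \ref{inverse} at the $(i,n+1-\alpha)$ entry. All the sign and exponent bookkeeping checks out.
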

\begin{proof}

From \eqref{elementary},  it is immediate to see that
$$
\frac{\partial}{\partial v^i} {\bf }{\bf e}_{\alpha}(v)
={\bf e}_{\alpha-1}(v_{(i)})~.
$$
Therefore, using the chain rule, we have
\begin{equation}\nonumber
\begin{split}
\begin{pmatrix}
\frac{\partial x^n}{\partial u^1}&\ldots&\frac{\partial x^n}{\partial u^n}\\
\vdots&&\vdots\\
\frac{\partial x^1}{\partial u^1}&\ldots&\frac{\partial x^1}{\partial u^n}
\end{pmatrix}
&=
\begin{pmatrix}
\frac{\partial x^n}{\partial v^1}&\ldots&\frac{\partial x^n}{\partial v^n}\\
\vdots&&\vdots\\
\frac{\partial x^1}{\partial v^1}&\ldots&\frac{\partial v^1}{\partial u^n}
\end{pmatrix}
\begin{pmatrix}
\frac{\partial v^1}{\partial u^1}&\ldots&\frac{\partial v^1}{\partial u^n}\\
\vdots&&\vdots\\
\frac{\partial v^n}{\partial u^1}&\ldots&\frac{\partial v^n}{\partial u^n}
\end{pmatrix}
\\
&=m\, \mathbb{E}(v) \, \, \mathrm{diag}((u^1)^{m-1},\ldots,(u^n)^{m-1})
\end{split}
\end{equation}
Therefore by the inverse function theorem,
\begin{equation}\nonumber 
\begin{pmatrix}
\frac{\partial u^1}{\partial x^n}&\ldots&\frac{\partial u^1}{\partial x^1}\\
\vdots&&\vdots\\
\frac{\partial u^n}{\partial x^n}&\ldots&\frac{\partial u^n}{\partial x^1}
\end{pmatrix}
=
\frac{1}{m}\mathrm{diag}((u^1)^{1-m},\ldots,(u^n)^{1-m})\,\mathbb{E}(v)^{-1}~.
\end{equation}
Comparing the $(i,n+1-\alpha)$ entries of the both sides,
we obtain
$$
\frac{\partial u^i}{\partial x^{\alpha}}
=\frac{(u^i)^{1-m}}{m}( \mathbb{E}(v)^{-1})_{i,n+1-{\alpha}}
$$
Thus the statement follows from 
Lemma \ref{inverse}. 
\end{proof}
\begin{corollary}\label{e-gm1n}
The vector field $e=\frac{\partial}{\partial x^1}$ is given as follows.
$$
e=\sum_{k=1}^n e^k\frac{\partial}{\partial u^k}~,\qquad
e^k=\frac{(-1)^{n+1}
}{m (u^k)^{m-1}}
\prod_{\begin{subarray}{c}1\leq l\leq n;\\
l\neq k
\end{subarray}}
(v_k-v_l)^{-1}~.
$$
\end{corollary}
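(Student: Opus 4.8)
The plan is to obtain the corollary as the $\alpha = 1$ specialization of Proposition \ref{du/dx}. First I would recall from \eqref{unit-e} that, in the $u$-coordinates, $e = \partial/\partial x^1 = \sum_{k=1}^n (\partial u^k/\partial x^1)\,\partial/\partial u^k$, so that the coefficients $e^k$ in the asserted expansion are exactly $e^k = \partial u^k/\partial x^1$. It therefore suffices to evaluate the formula of Proposition \ref{du/dx} at $\alpha = 1$.

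Setting $\alpha = 1$, the exponent $m(\alpha-2)+1$ becomes $m(1-2)+1 = 1-m$, so $(u^i)^{m(\alpha-2)+1} = (u^i)^{1-m} = 1/(u^i)^{m-1}$, while the sign $(-1)^{n+\alpha}$ becomes $(-1)^{n+1}$. Substituting and renaming $i$ as $k$ then gives $e^k = \dfrac{(-1)^{n+1}}{m(u^k)^{m-1}}\prod_{1\le l\le n,\ l\ne k}(v_k-v_l)^{-1}$, which is the claimed formula, and summing against $\partial/\partial u^k$ recovers $e$.

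I do not expect any real obstacle here: the corollary is a direct readout from Proposition \ref{du/dx}, whose proof (the computation of $\mathbb{E}(v)^{-1}$ via Lemmas \ref{lem:detE}--\ref{inverse} and the application of the inverse function theorem to the Jacobian $m\,\mathbb{E}(v)\,\mathrm{diag}((u^1)^{m-1},\dots,(u^n)^{m-1})$) has already been carried out. The only thing to keep track of is the index convention $x^\alpha = {\bf e}_{n+1-\alpha}(v)$, but this is already built into the statement of the proposition, so no further reindexing is needed and the corollary follows immediately.
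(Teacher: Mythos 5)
Your proposal is correct and matches the paper's intent exactly: the corollary is precisely the $\alpha=1$ specialization of Proposition \ref{du/dx}, combined with the expansion $e=\sum_k(\partial u^k/\partial x^1)\,\partial/\partial u^k$ from \eqref{unit-e}, and the paper offers no additional argument beyond this substitution. The sign and exponent computations ($(-1)^{n+1}$ and $(u^i)^{1-m}$) are carried out correctly.
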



\end{document}